\def\@tocline#1#2#3#4#5#6#7{\relax
  \ifnum #1>\c@tocdepth 
  \else
    \par \addpenalty\@secpenalty\addvspace{#2}%
    \begingroup \hyphenpenalty\@M
    \@ifempty{#4}{%
      \@tempdima\csname r@tocindent\number#1\endcsname\relax
    }{%
      \@tempdima#4\relax
    }%
    \parindent\z@ \leftskip#3\relax \advance\leftskip\@tempdima\relax
    \rightskip\@pnumwidth plus4em \parfillskip-\@pnumwidth
    #5\leavevmode\hskip-\@tempdima
      \ifcase #1
       \or\or \hskip 1em \or \hskip 2em \else \hskip 3em \fi%
      #6\nobreak\relax
    \hfill\hbox to\@pnumwidth{\@tocpagenum{#7}}\par
    \nobreak
    \endgroup
  \fi}
\declaretheorem[numberwithin=section]{theorem}
\declaretheorem[numberlike=theorem]{lemma}
\declaretheorem[numberlike=theorem]{proposition}
\declaretheorem[numberlike=theorem]{algorithm}
\declaretheorem[style=definition,numberlike=theorem]{definition}
\declaretheorem[style=definition,numberlike=theorem]{example}
\declaretheorem[style=remark,numberlike=theorem]{remark}
\newcommand{\terminology}[1]{\emph{#1}}
\newcommand{\abs}[1]{\left\lvert#1\right\rvert}
\newcommand{\restr}[2]{\left.#1\right\vert_{#2}}
\newcommand{\der}{\operatorname{der}}
\newcommand{\Aut}{\operatorname{Aut}}
\newcommand{\ad}{\operatorname{ad}}
\newcommand{\Ad}{\operatorname{Ad}}
\newcommand{\positiveset}{\mathbf{A}_+}
\newcommand{\norm}[1]{\left\lVert #1 \right\rVert}
\newcommand{\braket}[1]{\left\langle #1 \right\rangle}
\newcommand{\lt}{<}
\newcommand{\gt}{>}
\newcommand{\amp}{&}
\title{Gradings for nilpotent Lie algebras}
\author{Eero Hakavuori}
\address[Hakavuori]{SISSA}
\email{eero.hakavuori@sissa.it}
\author{Ville Kivioja}
\address[Kivioja]{University of Jyväskylä}
\email{kivioja.ville@gmail.com}
\author{Terhi Moisala}
\address[Moisala]{University of Jyväskylä}
\email{moisala.terhi@gmail.com}
\author{Francesca Tripaldi}
\address[Tripaldi]{University of Bern}
\email{francesca.tripaldi@math.unibe.ch}
\keywords{nilpotent Lie algebras, gradings, maximal gradings, positive gradings, stratifications, Carnot groups, classifications, large scale geometry, Heintze groups, lqp cohomology}
\date{January 11, 2022}
\begin{document}
\label{gradings}

\begin{abstract}
We present a constructive approach to torsion-free gradings of Lie algebras. Our main result is the computation of a maximal grading. Given a Lie algebra, using its maximal grading we enumerate all of its torsion-free gradings as well as its positive gradings. As applications, we classify gradings in low dimension, we consider the enumeration of Heintze groups, and we give methods to find bounds for non-vanishing \(\ell^{q,p}\) cohomology.
\end{abstract}

\subjclass[2010]{%
17B70, 
22E25, 
17B40, 
20F65, 
20G20. 
}

\maketitle
\tableofcontents\typeout{************************************************}
\typeout{Section 1 Introduction}
\typeout{************************************************}

\section{Introduction}\label{section-introduction}

\typeout{************************************************}
\typeout{Subsection 1.1 Overview}
\typeout{************************************************}

\subsection{Overview}\label{g:subsection:idp1}
A \terminology{grading} of a Lie algebra \(\mathfrak{g}\) is a direct sum decomposition
\begin{equation}
\mathfrak{g}=\bigoplus_{\alpha\in A}V_\alpha\label{eq-grading-direct-sum}
\end{equation}
indexed by an abelian group \(A\) in such a way that
\begin{equation}
[V_\alpha,V_\beta]\subset V_{\alpha+\beta}\label{eq-grading-relations}
\end{equation}
for each pair \(\alpha,\beta\in A\). In this paper, we will focus on Lie algebras over fields of characteristic zero and torsion-free gradings, i.e., gradings indexed over torsion-free abelian groups. There are also more general notions of gradings of Lie algebras or more general algebras, see for instance \cite{Patera-Zassenhaus-1989_on_lie_gradings_i} or \cite{Cornulier-2016-gradings}.

An important example of a Lie algebra grading is the so called \terminology{maximal grading}, which has the property of not admitting any proper refinement into smaller subspaces \(V_\alpha\). Slightly different notions of a maximal grading (sometimes also \terminology{fine grading}) exist in the literature. In this paper, we follow a similar viewpoint as in \cite{Favre-1973-system_de_poids}, and define maximal gradings by using maximal split tori of the derivation algebra \(\der(\mathfrak g)\), see {Definition~{\ref{def-maximal-grading}}}. The subspaces \(V_\alpha\) of a maximal grading are then the maximal subspaces of \(\mathfrak{g}\) where every derivation of the torus acts by scaling. In addition, the indexing group of a maximal grading is some \(\mathbb{Z}^k\) with the universal property that every other torsion-free grading can be obtained using projections, see {Proposition~{\ref{prop-max-grading}}}.

A classical example of a maximal grading is the Cartan decomposition, which plays a fundamental role in representation theory and the classification of semisimple Lie algebras over \(\mathbb{C}\), see for example \cite{Humphreys-1978-representation_theory}. There has been a growing interest in the study of (maximal) gradings of semisimple Lie algebras since the paper \cite{Patera-Zassenhaus-1989_on_lie_gradings_i}, see the survey \cite{Kochetov-2009-gradings_on_simple_lie_algebras_survey} or the monograph \cite{Elduque-Kochetov-2013-gradings_on_simple_lie_algebras} for an overview. Moreover, a classification of maximal gradings of simple classical Lie algebras over algebraically closed fields of characteristic zero can be found in \cite{Elduque-2010-fine_gradings_on_simple_lie_algebras}.

Regarding nilpotent Lie algebras over algebraically closed fields of characteristic zero, an in depth study of maximal gradings over torsion-free abelian groups was carried out in \cite{Favre-1973-system_de_poids}. One of the main results in \cite{Favre-1973-system_de_poids} is that within the family of nilpotent Lie algebras \(\mathfrak{g}\) of nilpotency step \(s\) and with abelianization \(\mathfrak{g}/[\mathfrak{g},\mathfrak{g}]\) of dimension \(r\), there are only finitely many torsion-free maximal gradings, up to automorphisms of the free nilpotent Lie algebra of step \(s\) and rank \(r\). This finiteness in the number of maximal gradings is in contrast with for example the existence of an uncountable number of non-isomorphic nilpotent Lie algebras over the reals in dimension 7 and higher.

There are two other special types of gradings of particular interest in the case of nilpotent Lie algebras: \terminology{positive gradings} and \terminology{stratifications}. A positive grading is a grading indexed over the reals such that in the direct sum decomposition {({\ref{eq-grading-direct-sum}})} all the non-zero spaces \(V_\alpha\) have positive indices \(\alpha\gt 0\). A stratification (also called a \terminology{Carnot grading}) is a positive grading for which \(V_1\) generates \(\mathfrak{g}\) as a Lie algebra.

Lie algebras with a stratification are the Lie algebras of Carnot groups. These groups play a central role in the fields of geometric analysis, geometric measure theory, and large scale geometry, see \cite{Le_Donne-2017-carnot_primer} for a list of references.

Positive gradings are important within the study of homogeneous spaces, as they appear directly in characterizations of such spaces. First, any negatively curved homogeneous Riemannian manifold is a \terminology{Heintze group} \(G\rtimes \mathbb{R}\) \cite{Heintze-1974-homogeneous_manifolds_of_negative_curvature}, where \(G\) is a nilpotent Lie group and the action of \(\mathbb{R}\) on \(G\) is given by a one-parameter family of automorphisms inducing a positive grading of \(G\). Second, any connected locally compact group that admits a contracting automorphism is a positively gradable Lie group \cite{Siebert-1986-contractive_automorphisms}. In this latter result, the group structure and contracting automorphism may also be replaced by a metric structure and a dilation, see \cite{CKLNO-2017-homogeneous_metric_spaces_to_lie_groups}.

Another active area of research that contains several open problems related to positively gradable Lie groups is the quasi-isometric classification of locally compact groups. A survey on the topic can be found in \cite{quasisurvey}. For instance, it is not known whether there exists a non-stratifiable positively gradable Lie group that is quasi-isometric to its asymptotic cone, nor whether all large-scale contractible groups are positively gradable, see \cite[Question~7.9]{Cornulier-2019-sublinear_bilipschitz_equivalence}. The quasi-isometric classification is open also for Heintze groups, see \cite{Carrasco-Sequeira-2017-qi_invariants_associated_to_a_heintze_group} for some known results.

\typeout{************************************************}
\typeout{Subsection 1.2 Implemented algorithms and applications}
\typeout{************************************************}

\subsection{Implemented algorithms and applications}\label{g:subsection:idp2}
The purpose of our work is to implement algorithms for computing various gradings of a Lie algebra. In this paper we give a thorough explanation of the algorithms together with relevant mathematical concepts and proofs of correctness, as well as present some applications. The source code for the implementation can be found in \cite{software-lie_algebra_gradings}, along with a full classification of gradings in dimension 6 and for ``most'' Lie algebras in dimension 7, in the sense explained in {Subsection~{\ref{applications-classification}}}.

\label{par-structure-coefficients}
In all of the algorithms, we work with a finite dimensional Lie algebra \(\mathfrak{g}\) defined by its structure coefficients in some field \(F\), which does not have to be the base field of the Lie algebra and will usually be smaller. That is, we assume we have a fixed basis \(X_1,\ldots,X_n\) of \(\mathfrak{g}\) and are given a family of coefficients \(\{c_{ij}^k\in F: i,j,k\in\{1,\ldots,n\}\}\) such that the Lie bracket is defined as
\begin{equation*}
[X_i,X_j] = \sum_{k=1}^nc_{ij}^kX_k\text{.}
\end{equation*}
In practice, \(F\) will usually be either the field of rationals or algebraic numbers. The actual base field of the Lie algebra will usually not be relevant, see the discussion in {Subsection~{\ref{ssec-base-fields}}}.

The most important of the implemented algorithms is the construction of a {maximal grading} of a Lie algebra \(\mathfrak{g}\) when the field \(F\) is algebraically closed, see {Algorithm~{\ref{algorithm-maximal-grading}}}.

Given a {maximal grading} of a Lie algebra \(\mathfrak{g}\), it is straightforward to construct a finite collection of gradings, which contains the universal realizations (see {Definition~{\ref{def-universal-realization}}}) of all torsion-free gradings of \(\mathfrak{g}\) up to equivalence in the sense of {Definition~{\ref{def-equivalent-group-gradings}}}. We cover this enumeration procedure in {Subsection~{\ref{subsection-torsion-free}}}. The finite collection will in general contain equivalent gradings. In the classification of gradings given in \cite{software-lie_algebra_gradings} we eliminate this redundancy in the case of nilpotent Lie algebras of dimension up to 6.

A maximal grading of a Lie algebra \(\mathfrak{g}\) also determines a parametrization of all the positive gradings of \(\mathfrak{g}\) as a convex cone. We explain this in detail in {Subsection~{\ref{sssec-pos-gradings}}} and present {Algorithm~{\ref{algorithm-positive-realization}}} which is a method to construct a positive realization of a grading when one exists. We also give a construction for a {stratification} (when one exists) in {Subsection~{\ref{section-stratifications}}}. The stratifiability criterion is the same as the one in \cite[Lemma~3.10]{Cornulier-2016-gradings}, which we write out as an explicit linear system.

Finally we give two applications of the parametrization of positive gradings. In {Proposition~{\ref{prop-noneqgradings-produce-diff-heintze}}} we show that all non-equivalent positive gradings define non-isomorphic Heintze groups. Thus we give methods to enumerate diagonal Heintze groups. Consequently, we make progress on the problem of finding all Heintze groups with prescribed nilradical, which is a question already hinted by Heintze, see the discussion after Theorem~3 in \cite{Heintze-1974-homogeneous_manifolds_of_negative_curvature}.

As another application, the parametrization of positive gradings gives a method to find better estimates for the non-vanishing of the \(\ell^{q,p}\) cohomology of a nilpotent Lie group, which is a quasi-isometry invariant.

\typeout{************************************************}
\typeout{Subsection 1.3 Structure of the paper}
\typeout{************************************************}

\subsection{Structure of the paper}\label{g:subsection:idp3}
In {Section~{\ref{section-preliminaries}}} we recall various definitions and terminology. In {Subsection~{\ref{ssec-base-fields}}} we discuss to what extent we need to worry about the base fields of the Lie algebras appearing throughout the paper. The core concepts of {realization}, {push-forward}, and {equivalence} are defined in {Subsection~{\ref{ssec-gradings-and-equivalences}}} and {universal realizations} are recalled in {Subsection~{\ref{ssec-univ-gradings}}}. {Subsection~{\ref{sec-gradings-by-tori}}} covers how to study {torsion-free} gradings of a Lie algebra \(\mathfrak{g}\) in terms of tori of the derivation algebra \(\der(\mathfrak{g})\). {Maximal gradings} and their {universal property} are discussed in {Subsection~{\ref{section-maximal-gradings}}}.

In {Section~{\ref{section-constructions}}} we give our main constructions. {Subsection~{\ref{subsection-torsion-free}}} reduces the enumeration problem of all torsion-free gradings to the construction of a maximal grading. {Subsection~{\ref{section-stratifications}}} covers the construction of a {stratification}. {Subsection~{\ref{sssec-pos-gradings}}} considers {Algorithm~{\ref{algorithm-positive-realization}}} on {positive gradings}. The construction of a maximal grading, {Algorithm~{\ref{algorithm-maximal-grading}}}, is described in {Subsection~{\ref{section-maximal-construction}}}.

In {Section~{\ref{section-applications}}} we give various applications of gradings to the study of Lie algebras and Lie groups. {Subsection~{\ref{applications-structure}}} shows how to use the maximal grading of a Lie algebra as a tool to detect decomposability of a Lie algebra, and how to reduce the dimension of the problem of deciding whether two Lie algebras are isomorphic. In {Subsection~{\ref{applications-classification}}} we classify up to equivalence the gradings of low dimensional nilpotent Lie algebras over \(\mathbb{C}\). In {Subsection~{\ref{applications-heintze}}} we cover the results on the enumeration of {Heintze groups}. Finally, in {Subsection~{\ref{applications-lpq}}}, we present the method for finding improved bounds for the non-vanishing of the {\(\ell^{q,p}\) cohomology}.

\typeout{************************************************}
\typeout{Section 2 Preliminaries}
\typeout{************************************************}

\section{Preliminaries}\label{section-preliminaries}
The contents of this section can, up to some modifications, be found in \cite[Section~3-4]{Kochetov-2009-gradings_on_simple_lie_algebras_survey}. Nonetheless, we give here a self-contained presentation to better fit our constructive approach.

\typeout{************************************************}
\typeout{Subsection 2.1 On base fields}
\typeout{************************************************}

\subsection{On base fields}\label{ssec-base-fields}
Throughout the paper, we will always take for granted that the base fields for all Lie algebras and other vector spaces are of characteristic zero. This characteristic zero assumption is essential in order to work interchangeably with gradings of a Lie algebra \(\mathfrak{g}\) and tori of the derivation algebra \(\der(\mathfrak{g})\), see \cite[Section~1.4]{Elduque-Kochetov-2013-gradings_on_simple_lie_algebras} for some discussion on the non-zero characteristic case.

At several points we will ignore the base field \(F'\) of the Lie algebra and only work with the potentially smaller field \(F\subset F'\) that the Lie algebra is defined over.
\begin{definition}\label{def-defined-over}
A Lie algebra \(\mathfrak{g}\) with base field \(F'\) is \terminology{defined over} a field \(F\subset F'\) if it has a basis such that the structure coefficients in that basis are all elements of \(F\).
\end{definition}
All the constructions given in {Section~{\ref{section-constructions}}} are in fact independent of the actual base field \(F'\supset F\). In the case where the construction does not involve the base field at all this independence is automatic, such as in the enumeration of gradings in {Subsection~{\ref{subsection-torsion-free}}}. Otherwise we will mention the reason explicitly either by a reference such as for stratifications in {Remark~{\ref{remark-stratification-field-extension}}} or by an immediate argument as for maximal gradings in {Remark~{\ref{remark-maximal-grading-field-extension}}}.

The independence of the base field is particularly convenient because it allows us to make use of computer algebra systems, where it is necessary to work with a \terminology{computable} field, i.e.\@, it is necessary to be able to distinguish elements and compute all the field operations with finite algorithms. For instance, although the field of reals \(\mathbb{R}\) is not computable, we may make use of our computer algebra implementation as soon as the Lie algebra is defined over the rationals or the real algebraic numbers, i.e.\@, the real numbers that are algebraic over the rationals. As an example, in {Subsection~{\ref{applications-classification}}} we will give a brief overview of our classification of gradings of all Lie algebras up to dimension 6 over \(\mathbb{C}\).

\typeout{************************************************}
\typeout{Subsection 2.2 Gradings and equivalences}
\typeout{************************************************}

\subsection{Gradings and equivalences}\label{ssec-gradings-and-equivalences}
In this section we define some key notions related to gradings of Lie algebras, including {push-forward} and {equivalence}.
\begin{definition}\label{def-grading}
\label{g:notation:idp4}
\label{g:notation:idp5}
\label{g:notation:idp6}
A \terminology{grading} of a Lie algebra \(\mathfrak{g}\) is a direct sum decomposition \(\mathcal V : \mathfrak{g}=\bigoplus_{\alpha\in A}V_\alpha\), where \(A\) is an abelian group and for each \(\alpha,\beta\in A\) it holds \([V_\alpha,V_\beta]\subset V_{\alpha+\beta}\). The group \(A\) is called the \terminology{grading group} of the grading \(\mathcal{V}\), and we say that the grading \(\mathcal{V}\) is \terminology{over \(A\)}, or that \(\mathcal{V}\) is an \terminology{\(A\)-grading}.

The subspaces \(V_\alpha\) are called the \terminology{layers} of the grading \(\mathcal{V}\). Each layer \(V_\alpha\) is also commonly referred to as the \terminology{homogeneous component of degree \(\alpha\)}. The elements \(\alpha \in A\) such that \(V_\alpha\neq 0\) are called the \terminology{weights} of \(\mathcal V\). We define the \terminology{support} of a grading as its set of weights, and we will denote it by \(\Omega\). A basis of \(\mathfrak g\) is said to be \terminology{adapted to \(\mathcal V\)}, or alternatively is said to be a \terminology{homogeneous basis}, if every element of the basis is contained in some layer of \(\mathcal V\).
\end{definition}
\begin{definition}\label{def-torsion-free-grading}
A {grading} is called \terminology{torsion-free} if its {grading group} is torsion-free.
\end{definition}
\begin{definition}\label{def-push-forward-grading}
\label{g:notation:idp7}
\label{g:notation:idp8}
\label{g:notation:idp9}
Let \(\mathcal V : \mathfrak g = \bigoplus_{\alpha \in A}V_\alpha \) be a {grading} over some abelian group \(A\). Given an automorphism \(\Phi \in \Aut(\mathfrak g )\), an abelian group \(B \) and a homomorphism \(f \colon A \to B \), we define the \terminology{push-forward grading} \(f_* \Phi(\mathcal V) : \mathfrak g =  \bigoplus_{\beta \in B}W_\beta \) over \(B\), where
\begin{equation*}
W_\beta= \bigoplus_{\alpha \in f^{-1}(\beta)} \Phi(V_{\alpha}).
\end{equation*}
When \(\Phi=\operatorname{Id}\), we simply denote \(f_* \operatorname{Id} (\mathcal{V}) = f_* \mathcal{V}\).
\end{definition}
It is readily checked that the push-forward grading is indeed a \(B\)-grading in the sense of {Definition~{\ref{def-grading}}}.

There are several different notions of equivalence of gradings in the literature. The one that we shall use is called \terminology{group-equivalence} in \cite{Kochetov-2009-gradings_on_simple_lie_algebras_survey}. For brevity, we will refer to this notion as equivalence. Stated in terms of push-forwards, the group-equivalence notion of \cite{Kochetov-2009-gradings_on_simple_lie_algebras_survey} takes the following form:
\begin{definition}\label{def-equivalent-group-gradings}
An {\(A\)-grading} \(\mathcal V\) and a \(B\)-grading \(\mathcal W\) are said to be \terminology{equivalent} if there exist an automorphism \(\Phi \in \Aut(\mathfrak g)\) and a group isomorphism \(f \colon A \to B \) such that \(\mathcal W = f_* \Phi(\mathcal V)\).
\end{definition}
The equivalence of an \(A\)-grading and a \(B\)-grading has a well known characterization in terms of push-forwards. We provide a brief proof for completeness.
\begin{lemma}\label{lemma-pf-with-generating-weights}
Let \(\mathcal V : \mathfrak g = \bigoplus_{\alpha \in A}V_\alpha \) and \(\mathcal W : \mathfrak g = \bigoplus_{\beta \in B}W_\beta \) be {gradings} such that the {weights} of \(\mathcal V \) and \(\mathcal W\) generate the abelian groups \(A\) and \(B\), respectively. If there exist homomorphisms \(f \colon A \to B \) and \(g\colon B \to A\) such that \(\mathcal W = f_*\mathcal V\) and \(\mathcal V = g_*\mathcal W\), then \(\mathcal V\) and \(\mathcal W\) are {equivalent}.
\end{lemma}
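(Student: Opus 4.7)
The plan is to show that under the hypotheses, $f$ and $g$ must be mutually inverse group isomorphisms, from which equivalence follows immediately by taking $\Phi = \operatorname{Id}$ in Definition~\ref{def-equivalent-group-gradings}.

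First I would substitute one relation into the other. Composing, $\mathcal{V} = g_*\mathcal{W} = g_*(f_*\mathcal{V})$, and unwinding Definition~\ref{def-push-forward-grading} (with $\Phi = \operatorname{Id}$) twice gives, for each $\alpha \in A$,
\begin{equation*}
V_\alpha \;=\; \bigoplus_{\alpha' \in (g\circ f)^{-1}(\alpha)} V_{\alpha'}.
\end{equation*}
Now fix a weight $\alpha \in \Omega_\mathcal{V}$ of $\mathcal{V}$. Set $\alpha_0 = (g\circ f)(\alpha)$. Then $V_\alpha$ is one of the summands in the above decomposition of $V_{\alpha_0}$, and since $V_\alpha \neq 0$, the direct sum forces $\alpha_0 = \alpha$; in addition, every other $\alpha'$ in the preimage $(g\circ f)^{-1}(\alpha)$ must satisfy $V_{\alpha'} = 0$, as otherwise the right-hand side would strictly contain $V_\alpha$. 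In particular, $g\circ f$ fixes every weight of $\mathcal{V}$.

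Because the weights of $\mathcal{V}$ generate $A$ by hypothesis, and $g\circ f$ is a group homomorphism agreeing with $\operatorname{Id}_A$ on a generating set, we conclude $g\circ f = \operatorname{Id}_A$. The symmetric argument, using that the weights of $\mathcal{W}$ generate $B$, gives $f\circ g = \operatorname{Id}_B$. Hence $f \colon A \to B$ is a group isomorphism, and the relation $\mathcal{W} = f_*\mathcal{V} = f_*\operatorname{Id}(\mathcal{V})$ shows that $\mathcal{V}$ and $\mathcal{W}$ are equivalent.

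The only delicate point is the passage from the direct sum identity to the pointwise fixing statement: one needs to observe that in a decomposition $V = \bigoplus_i V_i$ where $V$ itself occurs among the $V_i$, all other summands must vanish. The generation hypothesis is genuinely used here, since without it $g\circ f$ could act non-trivially on elements of $A$ outside the subgroup generated by the weights, and we would only recover that $f$ is an isomorphism after restricting to these subgroups.
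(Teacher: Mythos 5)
Your proof is correct and follows essentially the same route as the paper's: both arguments show that \(f\) and \(g\) are mutually inverse on the weights and then extend this to all of \(A\) and \(B\) using the generation hypothesis. Your version, which composes the push-forwards and argues that \(g\circ f\) fixes each weight via the direct-sum observation, is if anything slightly more explicit than the paper's chain \(V_\alpha = W_{f(\alpha)} = V_{g(f(\alpha))}\) about why a layer cannot properly contain itself as a summand.
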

\begin{proof}\label{g:proof:idp10}
Let us denote by \(\Omega_A\) and \(\Omega_B\) the sets of weights of \(\mathcal V\) and \(\mathcal W\). Notice first that by definition of the {push-forward}, \(f(\Omega_A) = \Omega_B\) and \(g(\Omega_B) = \Omega_A\), so both \(f\) and \(g\) are injective on weights. Moreover, we have for every \(\alpha \in \Omega_A\) and \(\beta \in \Omega_B\) the correspondence
\begin{equation*}
V_\alpha = W_{f(\alpha)} = V_{g(f(\alpha))} \qquad \text{and} \qquad W_\beta = V_{g(\beta)} = W_{f(g(\beta))}.
\end{equation*}
Hence \(f\colon \Omega_A \to \Omega_B \) is a bijection and \(f^{-1} = g\) on \(\Omega_B\). Since \(\Omega_A\) and \(\Omega_B\) generate \(A \) and \(B\) as groups, we get that \(f^{-1} = g\) on whole \(B\).
\end{proof}
Notice that the assumption on generating weights is indeed necessary: for instance, the gradings \(\mathbb{R} = V_1 \) over \(\mathbb{Z}\) and \(\mathbb{R} = V_{(1,0)} \) over \(\mathbb{Z}^2\) are push-forward gradings of each other, but they are not equivalent.

\typeout{************************************************}
\typeout{Subsection 2.3 Universal realizations}
\typeout{************************************************}

\subsection{Universal realizations}\label{ssec-univ-gradings}
In some cases the grading in hand plays a role of a partition of the Lie algebra, where the indexing of the layers is unnatural or irrelevant, see for instance \cite{Patera-Zassenhaus-1989_on_lie_gradings_i} and \cite[Section~1.1]{Elduque-Kochetov-2013-gradings_on_simple_lie_algebras}. One may want to equip such a grading with a new labeling, which has more convenient algebraic structure or which reveals some special properties of the grading, like positivity. Such a reindexing is said to be another \terminology{realization} of the original grading.
\begin{definition}\label{def-realization}
Let \(\mathcal V : \mathfrak{g}=\bigoplus_{\alpha\in A}V_\alpha\) be a grading with {weights} \(\Omega \subset A \). Suppose we have an embedding \(f \colon \Omega \to B\) into an abelian group \(B\) such that \(\mathcal{W}: \mathfrak{g}=\bigoplus_{\beta\in B}W_\beta\) is a grading, where \(W_{f(\alpha)}=V_\alpha\). Then the resulting \(B\)-grading \(\mathcal{W}\) is called a \terminology{realization} of the grading \(\mathcal V\).
\end{definition}
We do not in general require that the {weights of an \(A\)-grading} generate the grading group \(A\) in order to include e.g. gradings over \(A=\mathbb{R}\) in the discussion. Moreover, weights of a grading may have additional relations coming from the ambient group structure, even when the corresponding layers are unrelated. To build a satisfactory theory using homomorphisms between grading groups, we consider the notion of an (abelian) {universal realization}, see \cite[Section~3.3]{Kochetov-2009-gradings_on_simple_lie_algebras_survey}.
\begin{definition}\label{def-universal-realization}
Let \(\mathcal V \) be a {grading} of \(\mathfrak g\). A \terminology{universal realization} of \(\mathcal V\) is a {realization}  \(\widetilde {\mathcal V}\) as an \(A\)-grading such that for every realization of \(\mathcal{V}\) as a \(B\)-grading, there exists a unique homomorphism \(f \colon A \to B\) such that the \(B\)-grading is the {push-forward grading}  \(f_*\widetilde {\mathcal V}\).
\end{definition}
Observe that by {Lemma~{\ref{lemma-pf-with-generating-weights}}}, the universal realization of a grading is unique up to equivalence.

\label{par-span-brackets}
The universal realization of a grading \(\mathcal{V}\) can be constructed by considering the free abelian group generated by the weights and quotienting out the grading relations as described below. In this paper, the notation \(\langle X\rangle\) always refers to the span of \(X\) in the appropriate sense. \label{g:notation:idp11}
\begin{enumerate}
\item\label{alg-grading-relations}Enumerate the support of \(\mathcal V\) as \(\Omega = \{\alpha_1,\dots,\alpha_n \}\) and let \(\{e_1,\dots,e_n\}\) be the canonical basis of \(\mathbb{Z}^n\).
\item\label{alg-grading-relations2}Enumerate the grading relations \(R\subset\mathbb{Z}^n\) as follows. For each pair \(\alpha_i, \alpha_j \in \Omega\) such that \([V_{\alpha_i}, V_{\alpha_j}] \neq 0\), add the element \(e_i + e_j - e_k\) to \(R\), where \(k\) is the index such that \(\alpha_k = \alpha_i+\alpha_j\).
\item\label{alg-univ-real-quotient}For all \(i=1,\dots,n\), set \(\widetilde V_{\pi(e_i)} = V_{\alpha_i}\), where \(\pi \colon \mathbb{Z}^n \to \mathbb{Z}^n/\langle R\rangle\) is the projection. The resulting \(\mathbb{Z}^n/\langle R\rangle\)-grading \(\widetilde {\mathcal V}\) is the universal realization.
\end{enumerate}

To see that the grading \(\widetilde {\mathcal V}\) is well-defined, consider the homomorphism \(\phi \colon \mathbb{Z}^n \to A\) defined by \(\phi(e_i)=\alpha_i\) for all \(1 \le i \le n\), where \(A\) is the grading group of \(\mathcal{V}\). If \(\pi(e_i)=\pi(e_j)\), then \(e_i-e_j \in \langle R\rangle\) and we have \(\alpha_i=\phi(e_i)=\phi(e_j)=\alpha_j\) since \(R\subset\ker\phi\). Moreover, the obtained \(\mathbb{Z}^n/\langle R\rangle\)-grading is a universal realization of \(\mathcal V\) by the universal property of quotients, since the construction of \(\widetilde{\mathcal{V}}\) does not depend on the realization of \(\mathcal{V}\) that we start with.

In the rest of the paper we will focus on gradings that admit torsion-free realizations. For such gradings, the universal realizations are gradings over some \(\mathbb{Z}^k\), as demonstrated by the following lemma.
\begin{lemma}\label{lemma-universal-realization-Zk}
If \(\mathcal V\) is a {torsion-free grading}, then the {grading group}  of the {universal realization}  of \(\mathcal V\) is isomorphic to some \(\mathbb{Z}^k\).
\end{lemma}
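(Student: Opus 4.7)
My plan is to work with the explicit construction of the universal realization from Algorithm~\ref{algorithm-integer-vector-grading}: $\widetilde{\mathcal V}$ lives over $\tilde A = \mathbb Z^n/\langle R\rangle$, a finitely generated abelian group. By the structure theorem, $\tilde A \cong \mathbb Z^k \oplus T$ for a finite torsion subgroup $T$, so it suffices to prove $T = 0$.

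First I would exploit the hypothesis: since $\mathcal V$ admits a realization over a torsion-free abelian group $A$, the universal property yields a unique homomorphism $\bar\phi \colon \tilde A \to A$ with $\bar\phi_*\widetilde{\mathcal V} = \mathcal V$ and $\bar\phi(\pi(e_i)) = \alpha_i$, and torsion-freeness of $A$ forces $\bar\phi(T) = 0$. Let $p \colon \tilde A \to \tilde A/T \cong \mathbb Z^k$ denote the canonical projection. The key calculation is that $p$ is injective on the weights $\{\pi(e_i)\}$: if $p(\pi(e_i)) = p(\pi(e_j))$, then $\pi(e_i) - \pi(e_j) \in T$, so $m(\alpha_i - \alpha_j) = 0$ in $A$ for some $m \geq 1$ via $\bar\phi$, whence $\alpha_i = \alpha_j$ by torsion-freeness of $A$, and $i = j$ by distinctness of weights. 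Therefore $p_*\widetilde{\mathcal V}$ is itself a realization of $\mathcal V$, now over the torsion-free group $\tilde A/T$.

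I would then invoke Lemma~\ref{lemma-pf-with-generating-weights} to conclude that $\widetilde{\mathcal V}$ and $p_*\widetilde{\mathcal V}$ are equivalent, giving $\tilde A \cong \tilde A/T$ and hence $T = 0$. The weights on both sides generate their respective grading groups, and $p$ provides the push-forward from $\widetilde{\mathcal V}$ to $p_*\widetilde{\mathcal V}$. For the reverse push-forward, since $\tilde A/T$ is free abelian the projection $p$ splits, so I would seek a section $s \colon \tilde A/T \to \tilde A$ satisfying $s(p(\pi(e_i))) = \pi(e_i)$ for each $i$; such a section is automatic provided each weight $\pi(e_i)$ sits in a common splitting complement of $T$.

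The main obstacle I anticipate is precisely the existence of this splitting, i.e.\ showing that the ``torsion component'' of each $\pi(e_i)$ in the decomposition $\tilde A \cong \mathbb Z^k \oplus T$ vanishes. I expect this to follow from the structure of $R$, whose generators $e_a + e_b - e_c$ are primitive and consistent with the additive structure of the torsion-free $A$: in the Smith normal form of the relation matrix all invariant factors ought to equal $1$, with failures (as in non-torsion-free gradings) arising only through ``wrap-around'' identifications $\alpha_i + \alpha_j = \alpha_k$ forbidden by torsion-freeness of $A$. A case analysis on whether $\alpha_i = 0$ (treated through the relations $e_a + e_b - e_i$ coming from brackets $[V_{\alpha_a}, V_{\alpha_b}] \subset V_0$) and $\alpha_i \neq 0$ should complete the verification.
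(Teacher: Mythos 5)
You have isolated the crux correctly, but the obstacle you flag at the end is not surmountable: the weights need not lie in a common complement of the torsion subgroup, the section $s$ need not exist, and the invariant factors of the relation matrix can exceed $1$ even though $A$ is torsion-free. The mechanism is that a $\mathbb Z$-linear relation among the $\alpha_i$ that is \emph{not} forced by any nonzero bracket is not imposed in $\mathbb Z^n/\langle R\rangle$, while an integer multiple of it can be forced by a chain of bracket relations; this creates a torsion class that is a combination of several weights without any single weight being torsion, so your first two steps ($\bar\phi(T)=0$ and injectivity of $p$ on weights) go through but the splitting fails. Concretely, let $\mathfrak g$ be the $9$-dimensional nilpotent Lie algebra with basis $X,Y_1,Y_2,Z_1,Z_2,W_1,W_2,U,C$ and nonzero brackets $[Y_1,Y_2]=W_1$, $[Z_1,Z_2]=C$, $[X,W_2]=U$, $[X,U]=C$ (Jacobi holds because $[X,W_1]=0$), graded over $\mathbb Z$ by $\deg X=1$, $\deg Y_i=10$, $\deg Z_i=11$, $\deg W_i=20$, $\deg U=21$, $\deg C=22$. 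Ordering the weights as $(1,10,11,20,21,22)$, Algorithm~\ref{algorithm-integer-vector-grading} yields $R=\{\,2e_2-e_4,\;2e_3-e_6,\;e_1+e_4-e_5,\;e_1+e_5-e_6\,\}$, and $(2e_2-e_4)-(2e_3-e_6)+(e_1+e_4-e_5)+(e_1+e_5-e_6)=2(e_1+e_2-e_3)$, while $e_1+e_2-e_3\notin\langle R\rangle$ since every element of $\langle R\rangle$ has even $e_2$-coordinate. Hence $\mathbb Z^6/\langle R\rangle\cong\mathbb Z^2\oplus\mathbb Z/2\mathbb Z$, with the class of $e_3-e_1-e_2$ a nonzero $2$-torsion element, even though $\mathcal V$ is realized over $\mathbb Z$.

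For what it is worth, the paper's own proof founders on exactly the rock you identified: it observes that no nonzero weight lies in the torsion part $G_t$ and that the weights generate the grading group, and concludes $G_t=0$; but a group can be generated by non-torsion elements and still have torsion (already $\mathbb Z\oplus\mathbb Z/2=\langle(1,0),(1,1)\rangle$), which is precisely what happens above, where the weights are $u$, $v$, $u+v+t$, $2v$, $u+2v$, $2u+2v$ in $\mathbb Z u\oplus\mathbb Z v\oplus(\mathbb Z/2)t$. So your instinct to treat the splitting as the real issue rather than a formality was sound, and no case analysis on the relations in $R$ will rescue it. What \emph{does} survive of your argument is its first half: $p_*\widetilde{\mathcal V}$ over $\tilde A/T$ is always a torsion-free realization of $\mathcal V$ onto which $\widetilde{\mathcal V}$ pushes forward, and replacing the universal realization by this quotient is the natural repair for the places where Lemma~\ref{lemma-universal-realization-Zk} is invoked later.
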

\begin{proof}\label{g:proof:idp12}
Let \(\widetilde{\mathcal{V}}\) be the  universal realization of \(\mathcal{V}\), so \(\widetilde{\mathcal{V}}\) is a \(\mathbb{Z}^n/\langle R\rangle\)-grading for some subset \(R\subset \mathbb{Z}^n\). The quotient \(\mathbb{Z}^n/\langle R\rangle\) is isomorphic to a group \(\mathbb{Z}^k \times G_t\), where \(G_t\) is some torsion group.

By assumption the grading group \(A\) of \(\mathcal{V}\) is torsion-free. Since the image of \(G_t\) under a homomorphism must vanish in \(A\), we conclude that there are no non-zero {weights} in \(G_t\). Since the grading group of a universal realization is generated by the weights, we conclude that \(G_t=0\), and \(\widetilde{\mathcal{V}}\) is a \(\mathbb{Z}^k\)-grading.
\end{proof}
The following lemma is a part of \cite[Proposition~3.15]{Kochetov-2009-gradings_on_simple_lie_algebras_survey}, and we record it for later usage.
\begin{lemma}\label{lemma-universal-of-coarsening}
If a {grading} \(\mathcal V\) is a {push-forward} of a grading \(\mathcal W \), then every {realization} of \(\mathcal V\) is a {push-forward grading} of the {universal realization} of \(\mathcal W\).
\end{lemma}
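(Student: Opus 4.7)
The plan is to reduce to showing that the universal realization $\widetilde{\mathcal{V}}$ of $\mathcal{V}$ is itself a push-forward of $\widetilde{\mathcal{W}}$, and then use the universal property of $\widetilde{\mathcal{V}}$ to transfer this to any realization $\mathcal{V}'$ of $\mathcal{V}$.

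First, I would observe that every realization $\mathcal{V}'$ of $\mathcal{V}$ has, by definition, the same layer partition as $\mathcal{V}$, and hence as $\widetilde{\mathcal{V}}$. It follows that $\mathcal{V}'$ is also a realization of $\widetilde{\mathcal{V}}$, so by {Definition~{\ref{def-universal-realization}}} it is a push-forward $\mathcal{V}' = q_*\widetilde{\mathcal{V}}$. Combined with a push-forward $\widetilde{\mathcal{V}} = \phi_*\widetilde{\mathcal{W}}$, which I aim to construct in the second step, this gives the conclusion: a routine unwinding of {Definition~{\ref{def-push-forward-grading}}} shows that the composition of two push-forwards along homomorphisms is the push-forward along their composition, so $\mathcal{V}' = (q \circ \phi)_*\widetilde{\mathcal{W}}$.

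Second, to build $\phi$ I would work with the explicit quotient presentations provided by {Algorithm~{\ref{algorithm-integer-vector-grading}}}. Writing $\widetilde{\mathcal{W}}$ as a $\mathbb{Z}^m/\langle R_W\rangle$-grading with weights $\pi_W(e_i)$ coming from the weights $\gamma_1,\dots,\gamma_m$ of $\mathcal{W}$, and $\widetilde{\mathcal{V}}$ as a $\mathbb{Z}^n/\langle R_V\rangle$-grading with weights $\pi_V(e'_j)$ coming from $\alpha_1,\dots,\alpha_n$, let $h \colon C \to A$ be the homomorphism implementing the given push-forward $\mathcal{V} = h_*\mathcal{W}$. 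Define a lift $\bar\phi \colon \mathbb{Z}^m \to \mathbb{Z}^n$ on basis vectors by $\bar\phi(e_i) = e'_{k(i)}$, where $k(i)$ is the unique index with $h(\gamma_i) = \alpha_{k(i)}$. To descend $\bar\phi$ through the quotient, I would verify $\bar\phi(R_W) \subset \langle R_V\rangle$: a generator $e_i + e_j - e_k$ of $R_W$ arises from some $[W_{\gamma_i}, W_{\gamma_j}] \neq 0$ with $\gamma_k = \gamma_i + \gamma_j$, so since $h$ is a homomorphism we have $\alpha_{k(k)} = \alpha_{k(i)} + \alpha_{k(j)}$, and the non-vanishing bracket survives the inclusion $[V_{\alpha_{k(i)}}, V_{\alpha_{k(j)}}] \supset [W_{\gamma_i}, W_{\gamma_j}]$. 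These are precisely the conditions under which $e'_{k(i)} + e'_{k(j)} - e'_{k(k)}$ appears as a generator of $R_V$. Once $\bar\phi$ descends to $\phi \colon \widetilde{C} \to \widetilde{A}$, comparing layers confirms $\widetilde{\mathcal{V}} = \phi_*\widetilde{\mathcal{W}}$.

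The hard part will be the bookkeeping in the second step: keeping straight the relationships between the weights, their representatives in the free abelian groups $\mathbb{Z}^m$ and $\mathbb{Z}^n$, and the bracket-induced relations in $R_W$ and $R_V$. The essential conceptual ingredient is that push-forwards are defined via homomorphisms of grading groups, so the additive structure that encodes the bracket relations in {Algorithm~{\ref{algorithm-integer-vector-grading}}} is preserved intact.
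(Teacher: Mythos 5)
Your proof is correct, and it is "different from the paper" only in the sense that the paper does not prove this lemma at all: it is recorded as a special case of Proposition~3.15 of the cited Kochetov survey and used without argument. Your write-up supplies a self-contained proof in the spirit of the paper's constructive approach. Reducing everything to the single claim \(\widetilde{\mathcal{V}}=\phi_*\widetilde{\mathcal{W}}\) is the right move (in fact your first step is immediate from Definition~\ref{def-universal-realization}, which already quantifies over all realizations of \(\mathcal{V}\), so the detour through ``\(\mathcal{V}'\) is a realization of \(\widetilde{\mathcal{V}}\)'' is not needed), push-forwards compose as you claim, and the descent of \(\bar\phi\) through the quotient presentations of Algorithm~\ref{algorithm-integer-vector-grading} is exactly the bookkeeping you anticipate: the generator \(e_i+e_j-e_k\) of \(R_W\) does map into \(R_V\) because \(V_{\alpha_{k(i)}}\supset W_{\gamma_i}\) and \(V_{\alpha_{k(j)}}\supset W_{\gamma_j}\) force \([V_{\alpha_{k(i)}},V_{\alpha_{k(j)}}]\neq 0\). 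Two points are worth tightening. First, Definition~\ref{def-push-forward-grading} allows a push-forward to involve an automorphism \(\Phi\in\Aut(\mathfrak g)\), so the hypothesis is really \(\mathcal V=h_*\Phi(\mathcal W)\); your argument survives because push-forwards with automorphisms still compose (apply your construction to \(\Phi(\mathcal W)\), whose universal realization is \(\Phi(\widetilde{\mathcal W})\)), but as written you silently assume \(\Phi=\operatorname{Id}\). Second, in the final ``comparing layers'' step you need that \(\pi_V(e'_{k(i)})=\pi_V(e'_l)\) forces \(\alpha_{k(i)}=\alpha_l\), i.e.\ that distinct weights of \(\mathcal V\) remain distinct in \(\mathbb Z^n/\langle R_V\rangle\); this is exactly what is verified in the proof of correctness of Algorithm~\ref{algorithm-integer-vector-grading} and should be invoked explicitly, since otherwise the fibre of \(\phi\) over a class \(\pi_V(e'_l)\) could a priori pick up weights of \(\mathcal W\) lying over a different weight of \(\mathcal V\) and the layers of \(\phi_*\widetilde{\mathcal W}\) would not match those of \(\widetilde{\mathcal V}\).
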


\typeout{************************************************}
\typeout{Subsection 2.4 Gradings induced by tori}
\typeout{************************************************}

\subsection{Gradings induced by tori}\label{sec-gradings-by-tori}
\label{par-tori-intro}
\label{g:notation:idp13} In this subsection we describe the correspondence between {gradings} of a Lie algebra \(\mathfrak{g}\) and the {split tori} of its derivation algebra \(\der(\mathfrak{g})\). In general, gradings of a Lie algebra \(\mathfrak{g}\) are in one-to-one correspondence with algebraic quasitori, see \cite[Section~4]{Kochetov-2009-gradings_on_simple_lie_algebras_survey}. However, in this study we are only interested in cases when \(\mathfrak{g}\) is a finite-dimensional Lie algebra over a field of characteristic zero and the {gradings are over torsion-free abelian groups}. In this setting, the characterization of gradings in terms of algebraic quasitori can be reduced to studying algebraic subtori of the derivation algebra \(\der(\mathfrak{g})\).

For computational reasons, we will drop the algebraicity requirement for the subalgebras of \(\der(\mathfrak{g})\). This means we lose the one-to-one correspondence described in \cite{Kochetov-2009-gradings_on_simple_lie_algebras_survey}, but the less restrictive definition will be sufficient for our purposes. In particular, it will simplify the explicit construction of {maximal gradings} in terms of tori, see {Subsection~{\ref{section-maximal-construction}}}.

We start by defining split tori and {gradings induced} by them in the sense of \cite{Favre-1973-system_de_poids}.
\begin{definition}\label{def-torus}
An abelian subalgebra \(\mathfrak{t}\) of semisimple derivations of \(\mathfrak{g}\) is called a \terminology{torus} of \(\der(\mathfrak g)\). If the torus \(\mathfrak{t}\) is diagonalizable over the base field of \(\mathfrak{g}\), it is called a \terminology{split torus}.
\end{definition}
\begin{lemma}\label{lemma-gradings-induced}
Let \(\mathfrak t\) be a {split torus} of \(\der(\mathfrak g)\) and let \(\mathfrak{t}^*\) be its dual as a vector space. For each \(\alpha\in\mathfrak{t}^*\) define the subspace
\begin{equation*}
V_\alpha=\{X\in\mathfrak{g}: \delta(X)=\alpha(\delta)X\,\forall \delta\in\mathfrak{t}\}\text{.}
\end{equation*}
Then \(\mathfrak{g}=\bigoplus_{\alpha\in\mathfrak{t}^*}V_\alpha\) is a {\(\mathfrak{t}^*\)-grading}.
\end{lemma}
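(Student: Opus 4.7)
The plan is to verify the two defining properties of a grading: that the subspaces $V_\alpha$ give a direct sum decomposition of $\mathfrak{g}$ indexed by (finitely many) elements of $\mathfrak{t}^*$, and that the bracket respects the additive structure of $\mathfrak{t}^*$.

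First I would establish the direct sum decomposition. Since $\mathfrak{t}$ is a split torus, every element acts on $\mathfrak{g}$ as a diagonalizable operator, and since $\mathfrak{t}$ is abelian, these operators pairwise commute. The standard linear algebra fact that a commuting family of diagonalizable operators on a finite-dimensional vector space is simultaneously diagonalizable then gives a basis of common eigenvectors of the action of $\mathfrak{t}$. For any such common eigenvector $X$ and any $\delta \in \mathfrak{t}$, write $\delta(X) = \lambda_X(\delta)\, X$. Because the action of $\mathfrak{t}$ on $\mathfrak{g}$ is linear in $\delta$, the assignment $\delta \mapsto \lambda_X(\delta)$ is a linear functional on $\mathfrak{t}$, i.e.\ an element $\alpha \in \mathfrak{t}^*$. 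Grouping the basis vectors by the functional they produce yields $\mathfrak{g} = \bigoplus_{\alpha \in \mathfrak{t}^*} V_\alpha$ with only finitely many nonzero summands, and each summand coincides with the $V_\alpha$ defined in the statement.

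Next I would check the bracket relation. Let $X \in V_\alpha$ and $Y \in V_\beta$. Because every $\delta \in \mathfrak{t}$ is a derivation, the Leibniz rule gives
\begin{equation*}
\delta([X,Y]) = [\delta(X), Y] + [X, \delta(Y)] = \alpha(\delta)[X,Y] + \beta(\delta)[X,Y] = (\alpha+\beta)(\delta)\, [X,Y].
\end{equation*}
Since this holds for every $\delta \in \mathfrak{t}$, we conclude $[X, Y] \in V_{\alpha+\beta}$, which is exactly the grading condition $[V_\alpha, V_\beta] \subset V_{\alpha+\beta}$.

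The only non-trivial step is the simultaneous diagonalization, which relies on the finite-dimensionality of $\mathfrak{g}$ and on $\mathfrak{t}$ being split (so that the spectra live in the base field) together with abelianness; both hypotheses are built into Definition~\ref{def-torus}. The remaining parts, namely the linearity of the joint eigenvalue functional and the derivation identity, are formal.
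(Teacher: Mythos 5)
Your proof is correct and follows essentially the same route as the paper: pick a basis simultaneously diagonalizing the split torus, observe that the joint eigenvalues define linear functionals in $\mathfrak{t}^*$ so the layers span $\mathfrak{g}$, and deduce the bracket relation from the Leibniz rule. The extra detail you supply (justifying simultaneous diagonalizability from commutativity) is a harmless elaboration of what the paper takes as the meaning of ``split.''
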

\begin{proof}\label{g:proof:idp14}
Let \(X_1,\ldots,X_n\) be a basis of \(\mathfrak{g}\) that diagonalizes \(\mathfrak{t}\). Since each vector \(X_i\) is an eigenvector of every derivation \(\delta\in\mathfrak{t}\), there are well defined linear maps \(\alpha_1,\ldots,\alpha_n\in\mathfrak{t}^*\) determined by
\begin{equation*}
\delta(X_i) = \alpha_i(\delta)X_i,\quad i=1,\ldots,n\text{.}
\end{equation*}
By construction \(X_i\in V_{\alpha_i}\), so the direct sum \(\bigoplus_{\alpha\in\mathfrak{t}^*}V_\alpha\) spans all of the Lie algebra \(\mathfrak{g}\). The inclusion \([V_\alpha,V_\beta]\subset V_{\alpha+\beta}\) follows by linearity from the Leibniz rule \(\delta([X,Y]) = [\delta(X),Y]+[X,\delta(Y)]\) for all derivations \(\delta\in\mathfrak{t}\) and vectors \(X\in V_\alpha\) and \(Y\in V_\beta\).
\end{proof}
\begin{definition}\label{def-grading-induced-by-torus}
The {\(\mathfrak{t}^*\)-grading} of \(\mathfrak{g}\) defined in {Lemma~{\ref{lemma-gradings-induced}}} is called the \terminology{grading induced by the split torus \(\mathfrak{t}\)}.
\end{definition}
For the purposes of {Subsection~{\ref{section-maximal-gradings}}}, we need the following two lemmas. In {Lemma~{\ref{lemma-conjugates-and-subtori}}} we link {equivalences} and {push-forwards} of gradings to relations between the inducing tori.
\begin{lemma}\label{lemma-conjugates-and-subtori}
Let \(\mathfrak{t}_1\) and \(\mathfrak{t}_2\) be two {split tori} of \(\der(\mathfrak{g})\) with respective {induced} {\(\mathfrak{t}_1^*\)-grading} \(\mathcal V\) and {\(\mathfrak{t}_2^*\)-grading} \(\mathcal W\).
\begin{enumerate}[label=(\roman*)]
\item\label{enum-conj}If there exists an automorphism \(\Phi \in \Aut(\mathfrak{g})\) such that \(\Phi \circ \mathfrak t_1 \circ \Phi^{-1} = \mathfrak t_2\), then \(\mathcal V\) and \(\mathcal W\) are {equivalent}.
\item\label{enum-subtori}If \(\mathfrak t_1 \subset \mathfrak t_2\), then there exists a homomorphism \(f\) such that {\(\mathcal V = f_* \mathcal W\)}.
\end{enumerate}

\end{lemma}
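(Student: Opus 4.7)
The plan is to construct the required homomorphisms $f$ directly from the torus data (restriction for (ii), contragredient of conjugation for (i)) and then check the push-forward relations layer-by-layer, upgrading inclusions to equalities via a dimension count.

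For part (ii), I would take $f \colon \mathfrak{t}_2^* \to \mathfrak{t}_1^*$ to be the restriction map $\beta \mapsto \beta|_{\mathfrak{t}_1}$, which is linear and therefore a group homomorphism. For any $X \in W_\beta$ and any $\delta \in \mathfrak{t}_1 \subset \mathfrak{t}_2$, the defining relation $\delta(X) = \beta(\delta) X = f(\beta)(\delta) X$ immediately places $X$ in $V_{f(\beta)}$, giving the inclusion $W_\beta \subset V_{f(\beta)}$. Summing these inclusions over a fibre of $f$ yields $\bigoplus_{\beta \in f^{-1}(\alpha)} W_\beta \subset V_\alpha$; since both sides, summed over $\alpha$, exhaust $\mathfrak{g}$ and decompose it as a direct sum, the inclusions must be equalities. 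Comparing with Definition~\ref{def-push-forward-grading} (with $\Phi = \operatorname{Id}$), this is exactly $\mathcal{V} = f_*\mathcal{W}$.

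For part (i), the map $C_\Phi \colon \mathfrak{t}_1 \to \mathfrak{t}_2$, $\delta \mapsto \Phi\circ\delta\circ\Phi^{-1}$, is a linear isomorphism by hypothesis, so its contragredient $f \colon \mathfrak{t}_1^* \to \mathfrak{t}_2^*$, characterized by $f(\alpha) \circ C_\Phi = \alpha$, is a linear (hence group) isomorphism. For $X \in V_\alpha$ and any $\delta' = \Phi\circ\delta\circ\Phi^{-1} \in \mathfrak{t}_2$, a one-line computation
\begin{equation*}
\delta'(\Phi X) = \Phi(\delta X) = \alpha(\delta)\,\Phi X = f(\alpha)(\delta')\,\Phi X
\end{equation*}
shows $\Phi(V_\alpha) \subset W_{f(\alpha)}$. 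Since $\Phi$ is bijective and $f$ is injective on weights, the same dimension/direct-sum argument as in (ii) upgrades this to equality $\Phi(V_\alpha) = W_{f(\alpha)}$, and since $f$ is an isomorphism its fibres are singletons, so $\mathcal{W} = f_*\Phi(\mathcal{V})$ in the sense of Definition~\ref{def-equivalent-group-gradings}.

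The only real bookkeeping step is the passage from the inclusions $W_\beta \subset V_{f(\beta)}$ (respectively $\Phi(V_\alpha) \subset W_{f(\alpha)}$) to the fibre-wise direct sum formula demanded by Definition~\ref{def-push-forward-grading}; I do not expect any genuine obstacle, since both families are already given as direct sum decompositions of the same Lie algebra $\mathfrak{g}$. No use is made of the Lie bracket beyond the fact that $\mathcal{V}$ and $\mathcal{W}$ are genuine gradings (ensured by Lemma~\ref{lemma-gradings-induced}), so both arguments are purely linear-algebraic.
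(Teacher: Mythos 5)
Your proposal is correct and takes essentially the same approach as the paper: the same two homomorphisms appear (the restriction map \(\beta\mapsto\beta|_{\mathfrak{t}_1}\) for (ii) and the contragredient of \(\delta\mapsto\Phi\circ\delta\circ\Phi^{-1}\) for (i)), and the only difference is cosmetic bookkeeping — the paper verifies \(\Phi(V_\alpha)=W_{g(\alpha)}\) directly as an equality of sets and tracks a diagonalizing basis for (ii), whereas you establish one inclusion and upgrade it to an equality via the direct-sum argument, which works just as well.
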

\begin{proof}\label{g:proof:idp15}
To show \ref{enum-conj}, suppose that \(\Ad_{\Phi}\mathfrak{t}_1=\Phi\circ\mathfrak{t}_1\circ\Phi^{-1}=\mathfrak{t}_2\) for some automorphism \(\Phi\in\Aut(\mathfrak{g})\). Let \(g\colon\mathfrak{t}_1^*\to\mathfrak{t}_2^*\) be the linear isomorphism \(g=\Ad_{\Phi^{-1}}^*\) given by \(g(\alpha)(\delta) = \alpha(\Phi^{-1}\circ\delta\circ\Phi)\). Then
\begin{align*}
\Phi(V_\alpha) \amp= \{\Phi(X): \delta(X)=\alpha(\delta)X\,\forall \delta\in\mathfrak{t}_1 \}\\
\amp= \{Y: \Phi\circ\delta\circ\Phi^{-1}(Y)=\alpha(\delta)Y\,\forall \delta\in\mathfrak{t}_1 \}\\
\amp= \{Y: \eta(Y)=g(\alpha)(\eta)Y\,\forall \eta\in\mathfrak{t}_2 \} = W_{g(\alpha)}\text{.}
\end{align*}
Hence the gradings \(\mathcal{V}\) and \(\mathcal{W}\) are equivalent, as claimed.

Regarding \ref{enum-subtori}, suppose that  \(\mathfrak t_1 \subset \mathfrak t_2\). We claim that \(\mathcal{V}=f_*\mathcal{W}\) through the restriction map \(f\colon \mathfrak{t}_2^*\to \mathfrak{t}_1^*\), \(f(\beta)=\restr{\beta}{\mathfrak{t}_1}\). Indeed, fix a basis \(X_1,\ldots,X_n\) of \(\mathfrak{g}\) that diagonalizes the split torus \(\mathfrak{t}_2\) (and hence also the subtorus \(\mathfrak{t}_1\)). Let \(\beta_1,\ldots,\beta_n\in\mathfrak{t}_2^*\) be the maps defined by \(\delta(X_i)=\beta_i(\delta)X_i\) and define \(\alpha_i = \beta_i|_{\mathfrak{t_1}}\). By construction \(X_i\in W_{\beta_i}\), \(X_i\in V_{\alpha_i}\), and \(f(\beta_i)=\alpha_i\), proving that \(\mathcal{V}=f_*\mathcal{W}\).
\end{proof}
Finally, we observe that any {torsion-free grading} has a {realization} induced by a split torus.
\begin{lemma}\label{lemma-torus-grading-correspondence}
Let \(\mathcal V\) be a {torsion-free grading}. Then there exists a {split torus} \(\mathfrak{t}\) whose {induced \(\mathfrak{t}^*\)-grading} is a {realization} of \(\mathcal{V}\).
\end{lemma}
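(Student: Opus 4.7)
The plan is to reduce to the universal realization and then build the inducing torus explicitly from the integer weights.

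First I would replace $\mathcal V$ by its universal realization $\widetilde{\mathcal V} : \mathfrak{g} = \bigoplus_{v \in \mathbb{Z}^k} \widetilde V_v$, which by Lemma~\ref{lemma-universal-realization-Zk} is a $\mathbb{Z}^k$-grading. Since every realization of $\mathcal V$ is a realization of the same underlying decomposition (and realization is transitive: if $\mathcal W'$ realizes $\mathcal W$ via $g$ and $\mathcal W$ realizes $\mathcal V$ via $f$, then $\mathcal W'$ realizes $\mathcal V$ via $g\circ f$), it suffices to exhibit a split torus whose induced grading is a realization of $\widetilde{\mathcal V}$.

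The natural candidate is the torus spanned by the ``coordinate'' derivations $\delta_1,\dots,\delta_k$, where $\delta_i$ acts on each layer $\widetilde V_v$ by the scalar $v_i$. Each $\delta_i$ is well-defined and linear by the direct sum decomposition, and the Leibniz rule is immediate from the grading relation $[\widetilde V_v,\widetilde V_w] \subset \widetilde V_{v+w}$: for $X\in\widetilde V_v$, $Y\in \widetilde V_w$, one has $\delta_i([X,Y]) = (v_i+w_i)[X,Y] = [\delta_i X,Y] + [X,\delta_i Y]$. The $\delta_i$ commute and are simultaneously diagonalized in any basis of $\mathfrak{g}$ adapted to $\widetilde{\mathcal V}$, so they are semisimple; hence $\mathfrak{t} = \langle \delta_1,\dots,\delta_k\rangle$ is an abelian subalgebra of semisimple derivations, diagonalizable over the base field, i.e.\ a split torus in the sense of Definition~\ref{def-torus}.

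It remains to identify the induced grading. Let $\varepsilon_1,\dots,\varepsilon_k\in\mathfrak{t}^*$ be the dual basis to $\delta_1,\dots,\delta_k$ and define the embedding $\iota\colon\mathbb{Z}^k \to \mathfrak{t}^*$, $\iota(v) = \sum_i v_i\varepsilon_i$. Then by construction $\widetilde V_v \subset V_{\iota(v)}$, where $V_\alpha$ are the layers of the induced grading of Lemma~\ref{lemma-gradings-induced}. For the reverse inclusion, decompose any $X \in V_\alpha$ as $X = \sum_v X_v$ with $X_v\in\widetilde V_v$; applying each $\delta_i$ and using linear independence of the layers forces $X_v = 0$ whenever $v_i \neq \alpha(\delta_i)$ for some $i$, i.e.\ whenever $\iota(v) \neq \alpha$. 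Hence $V_{\iota(v)} = \widetilde V_v$, and the induced $\mathfrak t^*$-grading is a realization of $\widetilde{\mathcal V}$ via the injection $\iota$, completing the proof.

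The main point where one has to be slightly careful is the verification that $\iota$ is an embedding of the weight set (not of all of $\mathbb{Z}^k$, which is automatic here because $\delta_1,\dots,\delta_k$ are linearly independent) and that the inverse inclusion $V_{\iota(v)} \subset \widetilde V_v$ holds; both follow from a straightforward eigenspace argument, so no serious obstacle is expected.
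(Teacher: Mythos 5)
Your proposal is correct and follows essentially the same route as the paper: both construct the inducing torus as the span of derivations that act on each layer by the scalar given by the corresponding coordinate of the (integer) weight, and both verify that the induced grading has exactly the original layers by an injectivity/eigenspace argument. The only cosmetic difference is that you pass through the universal realization via Lemma~\ref{lemma-universal-realization-Zk}, while the paper reduces directly to the subgroup generated by the weights (which is likewise some \(\mathbb{Z}^m\)) and takes \(\mathfrak{t}=\{\delta_\varphi:\varphi\in A^*\}\), which is precisely your span of coordinate derivations.
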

\begin{proof}\label{g:proof:idp16}
Let \(A\) be the torsion-free abelian grading group of the grading \(\mathcal{V}\colon \mathfrak{g}=\bigoplus_{\alpha\in A}V_\alpha\) and let \(A^*\) be the space of homomorphisms \(A\to F\), where \(F\) is the base field of \(\mathfrak{g}\). By reducing to the subgroup generated by the {weights}, we may assume \(A \) is isomorphic to \(\mathbb{Z}^m\) for some \(m \ge 1\). For each \(\varphi\in A^*\) define the linear map
\begin{equation*}
\delta_\varphi\colon\mathfrak{g}\to\mathfrak{g},\quad \delta_\varphi(X) = \varphi(\alpha)X\quad\forall X\in V_\alpha\text{.}
\end{equation*}
We claim that \(\mathfrak t = \{\delta_\varphi : \varphi \in A^*\} \) is a split torus that induces a realization for \(\mathcal V\). Indeed, a direct computation shows that all the maps \(\delta_\varphi\) are derivations. They are diagonalizable since by construction they are multiples of the identity on each layer \(V_\alpha\). Hence \(\mathfrak t\) is a split torus.

Let then \(\mathcal W :\mathfrak{g}=\bigoplus_{\beta\in \mathfrak t^*}W_\beta\) be the {\(\mathfrak{t}^*\)-grading} induced by \(\mathfrak t\). Denote by \(\Omega\) the support of \(\mathcal V\), and define a map \(f \colon  \Omega \to \mathfrak t^*\) by \(f(\alpha)(\delta_\varphi) = \varphi(\alpha)\). Then \(f\) is well-defined: if \(\varphi, \phi \in A^*\) are such that \(\delta_\varphi = \delta_\phi\), then by the definition of \(\mathfrak t\) we have \(\varphi(\alpha) = \phi(\alpha)\) for all weights \(\alpha \in \Omega\).

First, we show that \(V_\alpha\subset W_{f(\alpha)}\) for every \(\alpha \in A\). By the construction of the torus \(\mathfrak t\), for each \(X\in V_\alpha\) we have that
\begin{equation*}
\delta_\varphi(X) = \varphi(\alpha)X = f(\alpha)(\delta_\varphi)X \quad \forall \delta_\varphi \in \mathfrak t.
\end{equation*}
By the definition of the grading \(\mathcal W\), we then have \(X\in W_{f(\alpha)}\) and so \(V_\alpha\subset W_{f(\alpha)}\).

Next, we show that the map \(f\) is injective, which would prove that \(V_\alpha = W_{f(\alpha)}\) for all \(\alpha\in \Omega\) and so \(\mathcal W\) would be a realization of \(\mathcal V \), as claimed. Note that since \(A \) is isomorphic to \(\mathbb{Z}^m\), for every non-zero \(\alpha \in A\) there exists a homomorphism \(\varphi \in A^*\) such that \(\varphi(\alpha)\neq 0\). Therefore, if \(\alpha, \alpha' \in \Omega\) are such that \(f(\alpha) = f(\alpha')\), then by the construction of the map \(f\) we have
\begin{equation*}
\varphi(\alpha-\alpha') = \varphi(\alpha)-\varphi(\alpha') = f(\alpha)(\delta_\varphi)-f(\alpha')(\delta_\varphi) = 0
\end{equation*}
for every homomorphism \(\varphi\colon A\to F\). So \(\alpha = \alpha'\) and \(f\) is injective, proving that \(\mathcal W\) is a realization of \(\mathcal V \).
\end{proof}

\typeout{************************************************}
\typeout{Subsection 2.5 Maximal gradings}
\typeout{************************************************}

\subsection{Maximal gradings}\label{section-maximal-gradings}
We now present the notion of {maximal grading} using maximal {split tori} and prove that a maximal grading has the {universal property} of push-forwards (see {Proposition~{\ref{prop-max-grading}}}). The formulation through the derivation algebra will be convenient in the construction of maximal grading in {Subsection~{\ref{section-maximal-construction}}}. The universal property will be exploited in {Subsection~{\ref{subsection-torsion-free}}} where we give a method to construct all {gradings over torsion-free abelian groups} of a Lie algebra from a given maximal grading.
\begin{definition}\label{def-maximal-grading}
Let \(\mathfrak g\) be a Lie algebra. A \terminology{maximal grading} of \(\mathfrak g\) is the {universal realization} of the {grading induced} by a maximal {split torus} of \(\der(\mathfrak g)\).
\end{definition}
\begin{remark}\label{rmk-max-grading-unique}
The {maximal grading} of a Lie algebra is unique up to {equivalence}, since maximal {split tori} are all conjugate (see for instance, \cite[Theorem~15.2.6.]{springer-book}). Indeed, by {Lemma~{\ref{lemma-conjugates-and-subtori}}}\ref{enum-conj} the conjugacy implies that any two maximal split tori induce equivalent gradings, so also their universal realizations are equivalent.
\end{remark}
\begin{proposition}\label{prop-max-grading}
Let \(\mathcal W\) be a {maximal grading} of \(\mathfrak{g}\) and \(\mathcal{V}\) a grading of \(\mathfrak{g}\). Then \(\mathcal{V}\) is a {push-forward} grading of \(\mathcal W\).
\end{proposition}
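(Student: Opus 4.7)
The plan is to bridge $\mathcal{V}$ and $\mathcal{W}$ through torus-induced gradings, exploiting the three lemmas already proven in Subsection~\ref{sec-gradings-by-tori}. Since the paper restricts throughout to torsion-free gradings, I take $\mathcal{V}$ torsion-free. The rough idea is: first realize $\mathcal{V}$ as a grading coming from some split torus $\mathfrak{t}_1$, then enlarge $\mathfrak{t}_1$ to a maximal split torus $\mathfrak{t}_2$, and finally transport this maximal torus to the specific one $\mathfrak{t}$ defining $\mathcal{W}$ using the conjugacy of maximal split tori.

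Concretely, I would first invoke Lemma~\ref{lemma-torus-grading-correspondence} to obtain a split torus $\mathfrak{t}_1\subset\der(\mathfrak{g})$ whose induced grading $\mathcal{V}_1$ is a realization of $\mathcal{V}$. Since $\der(\mathfrak{g})$ is finite-dimensional, a straightforward dimension argument extends $\mathfrak{t}_1$ to a maximal split torus $\mathfrak{t}_2$; let $\mathcal{W}_1$ denote the grading it induces. Because $\mathfrak{t}_1\subset\mathfrak{t}_2$, Lemma~\ref{lemma-conjugates-and-subtori}\ref{enum-subtori} yields a homomorphism $f$ with $\mathcal{V}_1=f_*\mathcal{W}_1$, i.e., $\mathcal{V}_1$ is a push-forward of $\mathcal{W}_1$.

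Now let $\mathfrak{t}$ be the maximal split torus from Definition~\ref{def-maximal-grading} with induced grading $\mathcal{W}_0$, so that $\mathcal{W}$ is the universal realization of $\mathcal{W}_0$. By the conjugacy of maximal split tori noted in Remark~\ref{rmk-max-grading-unique} and by Lemma~\ref{lemma-conjugates-and-subtori}\ref{enum-conj}, the gradings $\mathcal{W}_1$ and $\mathcal{W}_0$ are equivalent. Consequently their universal realizations coincide up to equivalence, and the universal realization of $\mathcal{W}_1$ may be identified with $\mathcal{W}$.

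Finally, applying Lemma~\ref{lemma-universal-of-coarsening} to the relation $\mathcal{V}_1=f_*\mathcal{W}_1$, every realization of $\mathcal{V}_1$ is a push-forward of the universal realization of $\mathcal{W}_1$, hence of $\mathcal{W}$. Since $\mathcal{V}$ is itself a realization of $\mathcal{V}_1$ (inverting the embedding delivered by Lemma~\ref{lemma-torus-grading-correspondence} and using that the set of layers of $\mathcal{V}$ and $\mathcal{V}_1$ coincide), I conclude that $\mathcal{V}$ is a push-forward of $\mathcal{W}$. The main obstacle is purely organizational: keeping realizations, equivalences, and push-forwards straight, and in particular identifying $\mathcal{W}$ with the universal realization of $\mathcal{W}_1$ even though $\mathcal{W}_1$ was built from a different (but conjugate) maximal split torus than the one used to define $\mathcal{W}$.
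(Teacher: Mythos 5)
Your argument is correct and follows essentially the same route as the paper: realize \(\mathcal{V}\) via a split torus (Lemma~\ref{lemma-torus-grading-correspondence}), enlarge to a maximal split torus and apply Lemma~\ref{lemma-conjugates-and-subtori}\ref{enum-subtori}, then pass to universal realizations via Lemma~\ref{lemma-universal-of-coarsening}. The only difference is cosmetic: you spell out the conjugacy step identifying \(\mathcal{W}\) with the universal realization of the grading induced by your chosen maximal torus, which the paper compresses into ``we may assume'' via Remark~\ref{rmk-max-grading-unique}.
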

\begin{proof}\label{g:proof:idp17}
Let \(\mathcal{V}'\) be the realization of \(\mathcal V\) as a {\(\mathfrak{t}^*\)-grading} induced by a {split torus} \(\mathfrak{t}\) given by {Lemma~{\ref{lemma-torus-grading-correspondence}}}.  Let \(\mathfrak{t}'\supset \mathfrak t \) be a maximal split torus in \(\der(\mathfrak{g})\) with induced grading \(\mathcal W'\). By {Lemma~{\ref{lemma-conjugates-and-subtori}}}.\ref{enum-subtori}, the grading \(\mathcal{V'}\) is a push-forward of \(\mathcal W'\). 

Since the maximal grading is unique up to equivalence by {Remark~{\ref{rmk-max-grading-unique}}}, we may assume that \(\mathcal W\) is the universal realization of \(\mathcal W'\). Therefore, since \(\mathcal V\) is a realization of \(\mathcal V'\), by {Lemma~{\ref{lemma-universal-of-coarsening}}} the grading \(\mathcal{V}\) is a push-forward of \(\mathcal{W}\).
\end{proof}
\begin{remark}\label{g:remark:idp18}
It follows from {Proposition~{\ref{prop-max-grading}}} and the discussion in \cite[Section~3.6]{Kochetov-2009-gradings_on_simple_lie_algebras_survey} that {maximal gradings} are {universal realizations} of fine gradings. In \cite[Definition~3.18]{Cornulier-2016-gradings}, maximal gradings are defined as the {gradings induced} by maximal split tori in the automorphism group \(\Aut(\mathfrak{g})\). \cite[Proposition~3.20]{Cornulier-2016-gradings} states that maximal gradings in the sense of \cite{Cornulier-2016-gradings} have a universal property equivalent to {Proposition~{\ref{prop-max-grading}}}, so by {Lemma~{\ref{lemma-pf-with-generating-weights}}} any such grading is maximal also in the sense of {Definition~{\ref{def-maximal-grading}}}. The maximal gradings considered in \cite{Favre-1973-system_de_poids} are the gradings induced by maximal split tori.
\end{remark}

\typeout{************************************************}
\typeout{Section 3 Constructions}
\typeout{************************************************}

\section{Constructions}\label{section-constructions}

\typeout{************************************************}
\typeout{Subsection 3.1 Enumeration of torsion-free gradings}
\typeout{************************************************}

\subsection{Enumeration of torsion-free gradings}\label{subsection-torsion-free}
Following the method suggested in \cite[Section~3.7]{Kochetov-2009-gradings_on_simple_lie_algebras_survey}, we now give a simple way to enumerate a complete (and finite) set of {universal realizations} of {gradings} of a Lie algebra using the {maximal grading}.

For the rest of this section, let \(\mathfrak g\) be a Lie algebra and let \(\mathcal W : \mathfrak g = \bigoplus_{n\in \mathbb{Z}^k}W_n \) be a {maximal grading}  of \(\mathfrak g\) with {weights} \(\Omega\). Denote by \(\Omega-\Omega\) the difference set \(\Omega- \Omega = \{n-m \,\mid\, n,m\in \Omega\}\). For a subset \(I \subset \Omega-\Omega \), let
\begin{equation*}
\pi_I \colon \mathbb{Z}^k \to \mathbb{Z}^k/\langle I\rangle
\end{equation*}
be the canonical projection. We define the finite set
\begin{equation}
\Gamma = \{{(\pi_I)_*\mathcal W}\mid I \subset \Omega - \Omega, \;\mathbb{Z}^k/\langle I\rangle\text{ is torsion-free} \}.\label{eq-Gamma}
\end{equation}

\begin{proposition}\label{prop-Quot-gradings-are-everything}
The set \(\Gamma\) is, up to {equivalence}, a complete set of {universal realizations} of {torsion-free gradings}  of \(\mathfrak g\).
\end{proposition}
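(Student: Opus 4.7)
Proof plan. The plan is to prove two inclusions: (a) the universal realization of any torsion-free grading is equivalent to some element of $\Gamma$; (b) every element of $\Gamma$ is itself a universal realization. Both directions rely on Proposition \ref{prop-max-grading} (every grading is a push-forward of $\mathcal{W}$) and Lemma \ref{lemma-pf-with-generating-weights} (mutual push-forwards with generating weights yield an equivalence).

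For (a), let $\mathcal{V}$ be a torsion-free grading with universal realization $\widetilde{\mathcal{V}}$, which by Lemma \ref{lemma-universal-realization-Zk} is over some $\mathbb{Z}^m$. Applying Proposition \ref{prop-max-grading} to $\widetilde{\mathcal{V}}$ yields $\varphi \colon \mathbb{Z}^k \to \mathbb{Z}^m$ with $\widetilde{\mathcal{V}} = \varphi_*\mathcal{W}$. Set $I = (\Omega-\Omega)\cap\ker\varphi$. The essential observation is the saturation equality
\[
\langle I \rangle \cap (\Omega-\Omega) = I,
\]
which holds because $\langle I \rangle \subseteq \ker\varphi$ and any element of $\Omega-\Omega$ lying in $\ker\varphi$ is by definition in $I$. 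Consequently, for $n, m \in \Omega$ the conditions $\varphi(n)=\varphi(m)$ and $\pi_I(n)=\pi_I(m)$ coincide, so $\widetilde{\mathcal{V}}$ and $(\pi_I)_*\mathcal{W}$ realize the same underlying layer decomposition. The factorization $\varphi = \bar{\varphi}\circ\pi_I$ makes $\widetilde{\mathcal{V}}$ a push-forward of $(\pi_I)_*\mathcal{W}$, while the universal property of $\widetilde{\mathcal{V}}$ makes $(\pi_I)_*\mathcal{W}$ a push-forward of $\widetilde{\mathcal{V}}$. The weights of both gradings generate their respective grading groups, so Lemma \ref{lemma-pf-with-generating-weights} delivers the equivalence $\widetilde{\mathcal{V}}\sim (\pi_I)_*\mathcal{W}$ together with an isomorphism $\mathbb{Z}^k/\langle I \rangle \cong \mathbb{Z}^m$; in particular $\mathbb{Z}^k/\langle I \rangle$ is torsion-free and $(\pi_I)_*\mathcal{W} \in \Gamma$.

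For (b), fix $\mathcal{V}_I = (\pi_I)_*\mathcal{W} \in \Gamma$ with universal realization $\widetilde{\mathcal{V}_I}$ over $\mathbb{Z}^{m'}$, and let $h \colon \mathbb{Z}^{m'} \to \mathbb{Z}^k/\langle I \rangle$ be the unique homomorphism with $\mathcal{V}_I = h_*\widetilde{\mathcal{V}_I}$. Proposition \ref{prop-max-grading} provides $\psi \colon \mathbb{Z}^k \to \mathbb{Z}^{m'}$ with $\widetilde{\mathcal{V}_I}=\psi_*\mathcal{W}$; comparing the two expressions $(\pi_I)_*\mathcal{W} = \mathcal{V}_I = (h\circ\psi)_*\mathcal{W}$ on the generating set $\Omega$ forces $h\circ\psi = \pi_I$. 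The map $\psi$ is surjective because $\psi(\Omega)$ consists of the weights of a universal realization, and $\psi$ annihilates $I$ because the identifications imposed by $\pi_I$ on $\Omega$ are already present in $\widetilde{\mathcal{V}_I}$ (which has the same layer decomposition as $\mathcal{V}_I$). A routine diagram chase using $h\circ\psi=\pi_I$ then shows $h$ is an isomorphism, hence $\mathcal{V}_I \sim \widetilde{\mathcal{V}_I}$.

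The main obstacle is establishing the saturation equality $\langle I \rangle \cap (\Omega-\Omega) = I$ in (a): this guarantees that passing from the subset $I \subseteq \Omega-\Omega$ to the subgroup $\langle I \rangle$ creates no spurious identifications among the weights of $\mathcal{W}$, and it is precisely what allows the finite combinatorial enumeration over subsets of $\Omega-\Omega$ to exhaust all torsion-free gradings up to equivalence.
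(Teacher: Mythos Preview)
Your argument for part (a) follows the same route as the paper's proof, but there is a genuine gap in your invocation of Proposition~\ref{prop-max-grading}. That proposition yields $\widetilde{\mathcal{V}} = \varphi_*\Phi(\mathcal{W})$ for some homomorphism $\varphi$ \emph{and some automorphism} $\Phi \in \Aut(\mathfrak{g})$; you have silently dropped $\Phi$. This matters: with $\Phi \neq \operatorname{Id}$, the layers of $\widetilde{\mathcal{V}}$ are sums of the subspaces $\Phi(W_n)$, not of the $W_n$ themselves, so your claim that ``$\widetilde{\mathcal{V}}$ and $(\pi_I)_*\mathcal{W}$ realize the same underlying layer decomposition'' is false as stated. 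The paper carries $\Phi$ through: it compares $\mathcal{V}$ with $\Phi(\mathcal{V}') = (\pi_I)_*\Phi(\mathcal{W})$, shows these are realizations of the same grading, and obtains the equivalence with $(\pi_I)_*\mathcal{W}$ via the pair $(\Phi, \text{group isomorphism})$. The repair to your argument is routine once you reinstate $\Phi$.

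Your part (b) goes beyond what the paper proves. The paper interprets ``complete'' only as the assertion that every universal realization of a torsion-free grading is equivalent to some member of $\Gamma$; it does not verify that each element of $\Gamma$ is itself a universal realization. Your argument for (b) has the same automorphism issue when you invoke Proposition~\ref{prop-max-grading} to produce $\psi$. Here, however, there is a clean fix: since $\widetilde{\mathcal{V}_I}$ is a realization of $\mathcal{V}_I = (\pi_I)_*\mathcal{W}$ (identity automorphism) and the maximal grading $\mathcal{W}$ is by definition its own universal realization, Lemma~\ref{lemma-universal-of-coarsening} gives $\widetilde{\mathcal{V}_I} = \psi_*\mathcal{W}$ with no automorphism, and the rest of your diagram chase goes through.
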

\begin{proof}\label{g:proof:idp19}
Let \(\mathcal{V}\) be the universal realization of some torsion-free grading. Due to {Lemma~{\ref{lemma-universal-realization-Zk}}}, the {grading group} of \(\mathcal V\) is some \(\mathbb{Z}^m\). By {Proposition~{\ref{prop-max-grading}}}, there exists a homomorphism \(f\colon \mathbb{Z}^k\to \mathbb{Z}^m \) and an automorphism \(\Phi \in \Aut(\mathfrak g)\) such that \(\mathcal V= f_*\Phi(\mathcal W) \). Let
\begin{equation*}
I = \ker(f)\cap (\Omega-\Omega).
\end{equation*}
We are going to show that \(\mathcal V' = (\pi_I)_*(\mathcal W) \) is equivalent to \(\mathcal{V}\). Then, a posteriori, \(\mathbb{Z}^k/\langle I\rangle\) is torsion-free and we have \(\mathcal V' \in \Gamma\), proving the claim.

First, since \(\ker(\pi_I) = \langle I\rangle \subseteq \ker(f)\), by the universal property of quotients there exists a unique homomorphism \(\phi \colon  \mathbb{Z}^k/\langle I\rangle  \to \mathbb{Z}^m \) such that \(f = \phi \circ \pi_I\). In particular,
\begin{equation*}
\mathcal V = f_*\Phi(\mathcal W) = \phi_*(\pi_I)_*\Phi(\mathcal W) = \phi_*\Phi(\mathcal V')\text{,}
\end{equation*}
so \(\mathcal V\) is a push-forward grading of \(\mathcal V'\).

Secondly, since also \(\ker(f)\cap (\Omega-\Omega) = I \subseteq \ker(\pi_I)\cap (\Omega-\Omega) \), we deduce that \(\mathcal V \) and \(\Phi(\mathcal V')\) are realizations of the same grading. Since \(\mathcal V\) is a universal realization, it follows that \(\Phi(\mathcal V')\) is a {push-forward}  grading of \(\mathcal V\). Consequently, \(\mathcal V'\) is a push-forward grading of \(\mathcal V\). Since the grading group of a universal realization is generated by the weights, we get that the gradings \(\mathcal V\) and \(\mathcal V'\) are equivalent  by {Lemma~{\ref{lemma-pf-with-generating-weights}}}, as wanted.
\end{proof}
Notice that some of the \(\mathbb{Z}^k/\langle I\rangle\)-gradings in \(\Gamma\) are typically equivalent to each other. From the classification point of view, a more challenging task is to determine the equivalence classes once the set \(\Gamma\) is obtained. In low dimensions, naive methods are enough to separate non-equivalent gradings, and for equivalent ones the connecting automorphism can be found rather easily.

In \cite{software-lie_algebra_gradings} we give a representative from each equivalence class in \(\Gamma\) for every 6 dimensional nilpotent Lie algebra over \(\mathbb{C}\) and for an extensive class of 7 dimensional Lie algebras over \(\mathbb{C}\). The results and the methods for distinguishing the equivalence classes of the obtained gradings are described in more detail in {Subsection~{\ref{applications-classification}}}.

\typeout{************************************************}
\typeout{Subsection 3.2 Stratifications}
\typeout{************************************************}

\subsection{Stratifications}\label{section-stratifications}
\begin{definition}\label{def-stratification}
A \terminology{stratification} (a.k.a. \terminology{Carnot grading}) is a {\(\mathbb{Z}\)-grading} \(\mathfrak{g}=\bigoplus_{n\in\mathbb{Z}}V_n\) such that \(V_1\) generates \(\mathfrak{g}\) as a Lie algebra. A Lie algebra \(\mathfrak{g}\) is \terminology{stratifiable} if it admits a stratification.
\end{definition}
In this section we present the linear problem of constructing a stratification for a Lie algebra (or determining that one does not exist). The method is based on \cite[Lemma~3.10]{Cornulier-2016-gradings}, which gives the following characterization of stratifiable Lie algebras:
\begin{lemma}\label{lemma-stratification-by-derivation}
A nilpotent Lie algebra \(\mathfrak{g}\) is {stratifiable} if and only if there exists a derivation \(\delta\colon\mathfrak{g}\to\mathfrak{g}\) such that the induced map \(\mathfrak{g}/[\mathfrak{g},\mathfrak{g}]\to \mathfrak{g}/[\mathfrak{g},\mathfrak{g}]\) is the identity map. Moreover, a stratification is given by the {layers} \(V_i=\ker (\delta-i)\).
\end{lemma}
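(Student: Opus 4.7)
The proof splits as an \emph{if and only if}. For the ($\Rightarrow$) direction I would start from a stratification $\mathfrak g=\bigoplus_{n\in\mathbb Z}V_n$ and define $\delta$ to be the \emph{grading derivation}, acting as multiplication by $n$ on $V_n$. The grading relations $[V_i,V_j]\subset V_{i+j}$ make this a derivation. Since $V_1$ generates $\mathfrak g$ as a Lie algebra, iterated brackets starting in $V_1$ span every $V_n$ with $n\ge 2$, so $[\mathfrak g,\mathfrak g]=\bigoplus_{n\ge 2}V_n$ and $\mathfrak g/[\mathfrak g,\mathfrak g]\cong V_1$. The induced map of $\delta$ is then multiplication by $1$ on $V_1$, i.e.\ the identity, and by construction $V_i=\ker(\delta-i)$.

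For the ($\Leftarrow$) direction, given a derivation $\delta$ inducing the identity on the abelianization, the plan is first to reduce to the semisimple case. In characteristic zero, the Jordan decomposition $\delta=\delta_s+\delta_n$ in $\operatorname{End}(\mathfrak g)$ has both summands derivations. Passing to $\mathfrak g/[\mathfrak g,\mathfrak g]$, the induced maps are semisimple and nilpotent and sum to the identity, so by uniqueness of the Jordan decomposition of $\operatorname{Id}$, the semisimple part induces the identity and the nilpotent part induces zero. Replacing $\delta$ by $\delta_s$ I may therefore assume $\delta$ is semisimple. Next I would decompose $\mathfrak g=\bigoplus_\lambda V_\lambda$ into $\delta$-eigenspaces (a priori over the algebraic closure of the base field). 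Since $\delta$ is a derivation, $[V_\lambda,V_\mu]\subset V_{\lambda+\mu}$. The hypothesis that $\delta$ induces the identity on $\mathfrak g/[\mathfrak g,\mathfrak g]$ means $V_1$ surjects onto the abelianization; combined with nilpotency of $\mathfrak g$ this will force $V_1$ to Lie-generate $\mathfrak g$, so that every eigenvalue of $\delta$ is a sum of $1$'s and hence a positive integer. The resulting decomposition is then automatically defined over the base field and produces the stratification with layers $V_i=\ker(\delta-i)$.

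The main technical point is the nilpotent-generation step: letting $\mathfrak h$ denote the subalgebra Lie-generated by $V_1$, the previous paragraph only yields $\mathfrak h+[\mathfrak g,\mathfrak g]=\mathfrak g$. An induction along the descending central series $\mathfrak g^{(k+1)}=[\mathfrak g,\mathfrak g^{(k)}]$ upgrades this to $\mathfrak g^{(k)}\subset \mathfrak h+\mathfrak g^{(k+1)}$ for every $k$, and this terminates at $\mathfrak g^{(N)}=0$ by nilpotency, forcing $\mathfrak h=\mathfrak g$. This is the one place the nilpotency hypothesis is essential and where I expect most of the verification to sit. The remaining subtleties -- uniqueness of the Jordan decomposition, the fact that Jordan components of a derivation are themselves derivations in characteristic zero, and the passage from diagonalizability over $\bar F$ with integer eigenvalues to diagonalizability over $F$ -- are all standard and mechanical.
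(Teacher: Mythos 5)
The paper does not actually prove this lemma: it is quoted verbatim from \cite[Lemma~3.10]{Cornulier-2016-gradings}, so there is no in-paper argument to compare against. Your reconstruction is the standard proof and is essentially correct. The forward direction (grading derivation of a stratification) is fine, as is the reduction to the semisimple case via the Jordan decomposition of a derivation, the eigenspace decomposition with $[V_\lambda,V_\mu]\subset V_{\lambda+\mu}$, and the nilpotent-generation induction along the lower central series, which does carry through (e.g.\ from $\mathfrak g=\mathfrak h+\mathfrak g^{(k)}$ one gets $\mathfrak g^{(2)}=[\mathfrak h+\mathfrak g^{(k)},\mathfrak h+\mathfrak g^{(k)}]\subset\mathfrak h+\mathfrak g^{(k+1)}$, hence $\mathfrak g=\mathfrak h+\mathfrak g^{(k+1)}$). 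Once $V_1$ generates, every weight is a positive integer and descent to the base field is immediate.

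One loose end deserves attention: after you replace $\delta$ by $\delta_s$, your conclusion produces the layers $\ker(\delta_s-i)$, whereas the ``moreover'' clause of the lemma asserts that $\ker(\delta-i)$ are the layers for the \emph{original} $\delta$; if $\delta$ were genuinely non-semisimple these kernels would not even span $\mathfrak g$. This is easily patched: a posteriori $\delta_n=0$. Indeed, $\delta_n$ commutes with $\delta_s$ and therefore preserves the layer $W_1=\ker(\delta_s-1)$ of the stratification you construct; since $\bar\delta_n=0$ on the abelianization, $\delta_n(\mathfrak g)\subset[\mathfrak g,\mathfrak g]$, so $\delta_n(W_1)\subset W_1\cap[\mathfrak g,\mathfrak g]=0$; and a derivation vanishing on the generating layer $W_1$ vanishes identically. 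Hence $\delta=\delta_s$ and the ``moreover'' clause holds as stated. Adding this one observation makes your proof complete.
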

The condition of {Lemma~{\ref{lemma-stratification-by-derivation}}} is straightforward to check in a basis {adapted to the lower central series}.
\begin{definition}\label{def-basis-adapted-to-lcs}
Recall that the lower central series of a Lie algebra \(\mathfrak{g}\) is the decreasing sequence of ideals
\begin{equation*}
\mathfrak{g}=\mathfrak{g}^{(1)}\supset \mathfrak{g}^{(2)}\supset \mathfrak{g}^{(3)}\supset\cdots\text{,}
\end{equation*}
where \(\mathfrak{g}^{(i+1)} = [\mathfrak{g},\mathfrak{g}^{(i)}]\). A basis \(X_1,\dots,X_n\) of a Lie algebra \(\mathfrak{g}\) is \terminology{adapted to the lower central series} if for every non-zero \(\mathfrak{g}^{(i)}\) there exists an index \(n_i\in\mathbb{N}\) such that \(X_{n_i},\dots,X_n\) is a basis of \(\mathfrak{g}^{(i)}\). The \terminology{degree} of the basis element \(X_i\) is the integer \(w_i=\max\{j\in\mathbb{N}: X_i\in\mathfrak{g}^{(j)}\}\).
\end{definition}
\begin{proposition}\label{prop-stratification-explicit-leibniz}
Let \(X_1,\ldots,X_n\) be a basis {adapted to the lower central series} of a nilpotent Lie algebra \(\mathfrak{g}\) defined over a field \(F\). Let \(w_1,\ldots,w_n\) be the {degrees}  of the basis elements and let \(c_{ij}^k\in F\) be the structure coefficients in the basis. A linear map \(\delta\colon\mathfrak{g}\to\mathfrak{g}\) is a derivation that restricts to the identity on \(\mathfrak{g}/[\mathfrak{g},\mathfrak{g}]\) if and only if
\begin{equation}
\delta(X_i) = w_iX_i + \sum_{w_j\gt w_i}a_{ij}X_j\label{eq-derivation-in-lcs-basis}
\end{equation}
such that, for each triple of indices \(i,j,k\) such that \(w_k\gt w_i+w_j\), the coefficients \(a_{ij}\in F\) satisfy the linear equation
\begin{align}
c_{ij}^k(w_k-w_i-w_j) = \amp\sum_{w_i\lt w_h\leq w_k-w_j}a_{ih}c_{hj}^k+\sum_{w_j\lt w_h\leq w_k-w_i}a_{jh}c_{ih}^k\label{eq-stratifying-leibniz-condition}\\
-\amp\sum_{w_i+w_j\leq w_h\lt w_k}c_{ij}^ha_{hk}\text{.}\notag
\end{align}

\end{proposition}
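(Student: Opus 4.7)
The plan is to decouple the two conditions. First I would pin down the shape \eqref{eq-derivation-in-lcs-basis} of \(\delta\) using only the adapted basis and the abelianization hypothesis; then, knowing that \(\delta\) has this form, I would translate the Leibniz rule \(\delta([X_i,X_j])=[\delta X_i,X_j]+[X_i,\delta X_j]\) into the coordinate relations \eqref{eq-stratifying-leibniz-condition}.

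For the shape, the key inductive claim is that any derivation \(\delta\) restricting to the identity on \(\mathfrak{g}/[\mathfrak{g},\mathfrak{g}]\) satisfies \(\delta(X)\equiv kX \pmod{\mathfrak{g}^{(k+1)}}\) for every \(X\in\mathfrak{g}^{(k)}\). The base case \(k=1\) is exactly the hypothesis, since \(\mathfrak{g}^{(2)}=[\mathfrak{g},\mathfrak{g}]\). For the inductive step I would write \(X\in\mathfrak{g}^{(k)}\) as a sum of brackets \([Y,Z]\) with \(Y\in\mathfrak{g}\) and \(Z\in\mathfrak{g}^{(k-1)}\), apply Leibniz, and absorb the error terms using the inductive hypothesis together with the inclusions \([\mathfrak{g}^{(2)},\mathfrak{g}^{(k-1)}]\subset\mathfrak{g}^{(k+1)}\) and \([\mathfrak{g},\mathfrak{g}^{(k)}]\subset\mathfrak{g}^{(k+1)}\). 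Applied to \(X=X_i\), the claim gives \(\delta(X_i)-w_iX_i\in\mathfrak{g}^{(w_i+1)}=\langle X_j : w_j>w_i\rangle\), which is exactly \eqref{eq-derivation-in-lcs-basis}. Conversely, \eqref{eq-derivation-in-lcs-basis} manifestly implies \(\delta(X_i)\equiv X_i\pmod{[\mathfrak{g},\mathfrak{g}]}\) when \(w_i=1\), recovering the abelianization condition.

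For the bracket condition, I would substitute \eqref{eq-derivation-in-lcs-basis} and \([X_i,X_j]=\sum_k c_{ij}^k X_k\) into the Leibniz identity and read off the coefficient of \(X_k\) on each side. After rearrangement this yields
\begin{equation*}
c_{ij}^k(w_k-w_i-w_j) = \sum_{w_h>w_i} a_{ih}c_{hj}^k + \sum_{w_h>w_j} a_{jh}c_{ih}^k - \sum_{w_h<w_k} c_{ij}^h a_{hk},
\end{equation*}
and the summation ranges tighten to those in \eqref{eq-stratifying-leibniz-condition} once one uses the vanishing \(c_{rs}^t=0\) whenever \(w_t<w_r+w_s\). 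The only bookkeeping step I expect to require care is verifying that triples with \(w_k\leq w_i+w_j\) can be omitted from the statement: when \(w_k<w_i+w_j\) one has \(c_{ij}^k=0\); when \(w_k=w_i+w_j\) the left-hand side vanishes trivially; and in both cases all three index ranges on the right of \eqref{eq-stratifying-leibniz-condition} are empty, so the relation reduces to \(0=0\). Hence the genuinely non-trivial relations are exactly those with \(w_k>w_i+w_j\), as in the statement.
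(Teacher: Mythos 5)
Your proposal is correct, and the second half — substituting \eqref{eq-derivation-in-lcs-basis} and \([X_i,X_j]=\sum_k c_{ij}^kX_k\) into the Leibniz rule, reading off the coefficient of \(X_k\), tightening the ranges via the vanishing \(c_{rs}^t=0\) for \(w_t<w_r+w_s\), and checking that the triples with \(w_k\le w_i+w_j\) contribute only trivial relations — matches the paper's computation essentially verbatim. Where you diverge is in establishing the normal form \eqref{eq-derivation-in-lcs-basis}. The paper gets it by first invoking Lemma~\ref{lemma-stratification-by-derivation}: the hypothesis produces a stratification \(\mathfrak{g}=V_1\oplus\dots\oplus V_s\) with \(\restr{\delta}{V_i}=i\cdot\operatorname{id}\), and since \(\mathfrak{g}^{(i)}=V_i\oplus\dots\oplus V_s\) one reads off \(\delta(Y)\in i\cdot Y+\mathfrak{g}^{(i+1)}\) for \(Y\in\mathfrak{g}^{(i)}\). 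You instead prove \(\delta(X)\equiv kX\pmod{\mathfrak{g}^{(k+1)}}\) for \(X\in\mathfrak{g}^{(k)}\) directly by induction on \(k\), writing \(X\) as a sum of brackets \([Y,Z]\) with \(Z\in\mathfrak{g}^{(k-1)}\) and absorbing the errors via \([\mathfrak{g}^{(2)},\mathfrak{g}^{(k-1)}]\subset\mathfrak{g}^{(k+1)}\) and \([\mathfrak{g},\mathfrak{g}^{(k)}]\subset\mathfrak{g}^{(k+1)}\); this is a correct and standard argument. Your route is more self-contained — it does not pass through the existence of a stratification, so the proposition stands independently of the cited criterion — at the cost of a short induction; the paper's route is shorter given that Lemma~\ref{lemma-stratification-by-derivation} is already quoted for the algorithm anyway. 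Your handling of the converse direction (degree-one basis vectors span the abelianization, so \eqref{eq-derivation-in-lcs-basis} recovers the identity on \(\mathfrak{g}/[\mathfrak{g},\mathfrak{g}]\)) is also correct and is left implicit in the paper.
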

\begin{proof}\label{g:proof:idp20}
If \(\delta\colon\mathfrak{g}\to\mathfrak{g}\) is a derivation that restricts to the identity on \(\mathfrak{g}/[\mathfrak{g},\mathfrak{g}]\), then by {Lemma~{\ref{lemma-stratification-by-derivation}}} \(\mathfrak{g}\) admits a {stratification}
\begin{equation*}
\mathfrak{g}=V_1\oplus\dots\oplus V_s
\end{equation*}
such that \(\restr{\delta}{V_i} = i\cdot \operatorname{id}\). Since the terms of the lower central series are given in terms of the stratification as \(\mathfrak{g}^{(i)}=V_i\oplus\dots\oplus V_s \), it follows that \(\delta(Y)\in i\cdot Y+\mathfrak{g}^{(i+1)}\) for any \(Y\in\mathfrak{g}^{(i)}\). That is, a derivation \(\delta\) restricting to the identity on \(\mathfrak{g}/[\mathfrak{g},\mathfrak{g}]\) is of the form {({\ref{eq-derivation-in-lcs-basis}})} for some coefficients \(a_{ij}\in F\).

It is then enough to show that {({\ref{eq-stratifying-leibniz-condition}})} is equivalent to the Leibniz rule
\begin{equation*}
\delta([X_i,X_j]) = [\delta(X_i),X_j]+[X_i,\delta(X_j)],\quad \forall i,j\in\{1,\ldots,n\}\text{.}
\end{equation*}
Indeed, this would prove that a linear map defined by {({\ref{eq-derivation-in-lcs-basis}})} is a derivation if and only if the coefficients \(a_{ij}\) satisfy the linear system {({\ref{eq-stratifying-leibniz-condition}})}.

Since the basis \(X_i\) is adapted to the lower central series, only the structure coefficients with large enough degrees are non-zero, i.e.\@, we have
\begin{equation}
[X_i,X_j] = \sum_{w_k\geq w_i+w_j}c_{ij}^kX_k\text{.}\label{eq-lcs-basis-structure-coefficients}
\end{equation}
By direct computation using {({\ref{eq-derivation-in-lcs-basis}})} and {({\ref{eq-lcs-basis-structure-coefficients}})} we get the expressions
\begin{align*}
[\delta(X_i),X_j] \amp =  \sum_{w_k\geq w_i+w_j}c_{ij}^kw_iX_k + \sum_{w_h\gt w_i}\sum_{w_k\geq w_h+w_j}a_{ih}c_{hj}^kX_k\\
[X_i,\delta(X_j)] \amp = \sum_{w_k\geq w_i+w_j}c_{ij}^kw_jX_k + \sum_{w_h\gt w_j}\sum_{w_k\geq w_i+w_h}a_{jh}c_{ih}^kX_k\\
\delta([X_i,X_j]) \amp = \sum_{w_k\geq w_i+w_j}c_{ij}^kw_kX_k + \sum_{w_h\geq w_i+w_j}\sum_{w_k\gt w_h}c_{ij}^ha_{hk}X_k
\end{align*}
Denoting \(\sum_k B_{ij}^kX_k = \delta([X_i,X_j])-[\delta(X_i),X_j]-[X_i,\delta(X_j)]\), we find that the equation \(B_{ij}^k=0\) is up to reorganizing terms equivalent to {({\ref{eq-stratifying-leibniz-condition}})}.

Finally, we observe that when \(w_k\leq w_i+w_j\), the condition \(B_{ij}^k=0\) is automatically satisfied: for \(w_k\lt w_i+w_j\) all of the sums are empty, and for \(w_k=w_i+w_j\), the only remaining terms from the sums cancel out as
\begin{equation*}
B_{ij}^k = c_{ij}^kw_k-c_{ij}^kw_i-c_{ij}^kw_j=0\text{.}\qedhere
\end{equation*}

\end{proof}
The concrete criterion of {Proposition~{\ref{prop-stratification-explicit-leibniz}}} provides the algorithm to construct a stratification. That is, a stratification or the non-existence of one for a nilpotent Lie algebra \(\mathfrak{g}\) is found as follows:
\begin{enumerate}
\item\label{alg-stratification-1-lcs}Construct a basis \(X_1,\ldots,X_n\) {adapted to the lower central series}.
\item\label{alg-stratification-2-leibniz}Find a derivation \(\delta\) as in {({\ref{eq-derivation-in-lcs-basis}})} by solving the linear system {({\ref{eq-stratifying-leibniz-condition}})}. If the system has no solutions, then \(\mathfrak{g}\) is not {stratifiable}.
\item\label{alg-stratification-3-layers}Return the stratification with the {layers} \(V_i=\ker (\delta-i)\).
\end{enumerate}

\begin{remark}\label{remark-stratification-field-extension}
By \cite[Theorem~1.4]{Cornulier-2016-gradings}, the existence of a stratification for a Lie algebra is invariant under base field extensions, so it suffices to work within any field \(F\) that \(\mathfrak{g}\) is {defined over}.
\end{remark}

\typeout{************************************************}
\typeout{Subsection 3.3 Positive gradings}
\typeout{************************************************}

\subsection{Positive gradings}\label{sssec-pos-gradings}
\begin{definition}\label{def-positive-grading}
An {\(\mathbb{R}\)-grading} \(\mathcal{V} \colon \mathfrak{g} = \bigoplus_{\alpha \in \mathbb{R}} V_\alpha \) is \terminology{positive} if \(\alpha\gt 0\) for all the {weights} of \(\mathcal{V}\). If such a grading exists for \(\mathfrak{g}\), then \(\mathfrak{g}\) is said to be \terminology{positively gradable}.
\end{definition}
In this section, we formulate and prove {Algorithm~{\ref{algorithm-positive-realization}}}. Using this algorithm one can decide whether a given grading of a Lie algebra admits a positive realization. If one starts with a Lie algebra with a known maximal grading, one is therefore able to answer the following questions:
\begin{enumerate}[label=(\roman*)]
\item\label{question-existence-of-positive-grading}Can the Lie algebra be equipped with a positive grading?
\item\label{question-enumeration-of-positive-gradings}Can one find in some sense all positive gradings of the Lie algebra?
\end{enumerate}

The methods of this article to construct a maximal grading are guaranteed to work only when the Lie algebra is defined over an algebraically closed field, see {Subsection~{\ref{section-maximal-construction}}}. In this discussion, we shall assume we are given a Lie algebra and a maximal grading for it, but we are not assuming that the field of coefficients is algebraically closed. However, regarding question~\ref{question-existence-of-positive-grading}, note that the existence of a positive grading for a given Lie algebra is invariant under extension of scalars by \cite[Theorem~1.4]{Cornulier-2016-gradings} so we may as well work with the algebraic closure.

To answer question~\ref{question-existence-of-positive-grading}, we observe that a Lie algebra admits a positive grading if and only if its maximal grading admits a positive realization by {Proposition~{\ref{prop-max-grading}}}. A maximal grading admits a positive realization exactly when the convex hull of its weights does not contain zero, see \cite[Proposition~3.22]{Cornulier-2016-gradings}. To concretely find a positive realization, one may use {Algorithm~{\ref{algorithm-positive-realization}}}.

Question~\ref{question-enumeration-of-positive-gradings} admits two relevant interpretations. First, one may use the enumeration of universal realizations of gradings of the given Lie algebra, as done in {Subsection~{\ref{subsection-torsion-free}}}, and using {Algorithm~{\ref{algorithm-positive-realization}}} construct their positive realizations when such realizations exist. The resulting list of positive gradings is complete in the sense that every positive grading of the given Lie algebra has the same layers as a grading on the list, up to a Lie algebra automorphism.

Question~\ref{question-enumeration-of-positive-gradings} may also be interpreted as finding a parametrization of the usually uncountable family of positive gradings. Let \(\mathcal W :\mathfrak{g}=\bigoplus_{n\in \mathbb{Z}^k}W_n\) be a {maximal grading} of our given Lie algebra \(\mathfrak g\) and let \(\Omega\) be the support of \(\mathcal{W}\). For any \(\mathbf{a}=(a_1,\ldots,a_k)\in\mathbb{R}^k\), let \(\pi^{\mathbf a} \colon \mathbb{Z}^k \to \mathbb{R}\) be the projection given by \(\pi^{\mathbf a}(e_i)=a_i\) with \(e_i\) denoting the standard basis elements of the lattice \(\mathbb{Z}^k\). Let
\begin{equation}
\positiveset = \{\mathbf{a}\in\mathbb{R}^k: \pi^{\mathbf a}(n)\gt 0\,\forall n\in\Omega \}\text{.}\label{eq-positive-set}
\end{equation}
The {push-forward grading} \(\pi^{\mathbf a}_*(\mathcal W)\) is a positive grading if and only if \(\mathbf{a}\in\positiveset\). Every positive grading of \(\mathfrak{g}\) is equivalent to some grading \(\pi^{\mathbf a}_*(\mathcal W)\) and hence corresponds to an element of the set \(\positiveset\). However, a pair of different elements of \(\positiveset\) may correspond to a pair of equivalent gradings.

We next present and prove {Algorithm~{\ref{algorithm-positive-realization}}}. The idea behind the algorithm is rather simple: finding a positive realization can be seen as a linear programming problem. The purpose of the slightly cumbersome form of the linear programming problem in {Algorithm~{\ref{algorithm-positive-realization}}} is to guarantee that the weights of the positive realization are small. This method works well for problems in small dimensions, but does not scale well to large problems. If one does not care about the resulting positive weights, there is a simpler algorithm, see {Remark~{\ref{rmk-lightweight-positivity-algorithm}}}.
\begin{algorithm}[Positive realization]\label{algorithm-positive-realization}
Input: A {torsion-free grading} \(\mathcal{V}\) for a Lie algebra \(\mathfrak{g}\). Output: A {positive} {}integer realization of \(\mathcal{V}\) with the smallest possible maximal weight, if any positive realization exists.
\begin{enumerate}
\item{}Compute the {universal realization} \(\widetilde{\mathcal{V}}\) of \(\mathcal{V}\) as in {Subsection~{\ref{ssec-univ-gradings}}}. Let \(\alpha_1,\ldots,\alpha_N \in \mathbb{Z}^k\) be the weights of \(\widetilde{\mathcal{V}}\).
\item\label{alg-pos-apriori-bound}Compute
\begin{equation*}
M = 1+\max\Big(\max_{i,j}\norm{\alpha_i-\alpha_j}_\infty,\max_i\norm{\alpha_i}_\infty\Big)
\end{equation*}
and set \(C = (3+N2^{N+1})M^k\).
\item\label{alg-pos-distinct-solution}Solve the integer linear programming problem
\begin{align}
\text{Minimize}\amp\amp z\label{eq-pos-linprog-optimization}\\
\text{subject to}\amp\amp z\geq \braket{w,\alpha_i} \amp\geq 1,\amp\quad 1\amp\leq i\leq N\label{eq-pos-linprog-positivity}\\
\amp\amp\braket{w,\alpha_i-\alpha_j} \amp\geq b_{ij} -C(1-b_{ij}),\amp\quad 1\amp\leq i\lt j\leq N\label{eq-pos-linprog-disjoint1}\\
\amp\amp\braket{w,\alpha_i-\alpha_j} \amp\leq b_{ij}-1 + Cb_{ij},\amp\quad 1\amp\leq i\lt j\leq N\label{eq-pos-linprog-disjoint2}
\end{align}
in the variables \(z\in\mathbb{Z}\), \(w\in\mathbb{Z}^k\) and binary variables \(b_{ij}\in\{0,1\}\). If no solution exists, then the grading \(\mathcal{V}\) does not have a positive realisation.
\item\label{alg-pos-pushforward}Let \(f\colon \mathbb{Z}^k\to\mathbb{Z}\) be the homomorphism \(f(\cdot)=\langle w,\cdot\rangle\). Return the {push-forward grading} \(f_*\widetilde{\mathcal{V}}\).
\end{enumerate}

\end{algorithm}
\begin{proof}[Proof of correctness]\label{g:proof:idp21}
If the grading \(\mathcal{V}\) has a positive realization, then it is a {push-forward grading} of the universal realization by some homomorphism \(f\colon \mathbb{Z}^k\to\mathbb{R}\) satisfying the inequalities \(f(\alpha_i)\gt 0\) and \(f(\alpha_i)\neq f(\alpha_j)\) for all \(i\neq j\). Since the inequalities all have integer coefficients, the existence of such a homomorphism is equivalent to the existence of a homomorphism \(f\colon \mathbb{Z}^k\to\mathbb{Z}\) with the same properties. We may always write such a homomorphism in the form \(f(\cdot)=\braket{w,\cdot}\) for some \(w\in\mathbb{Z}^k\). To prove the correctness of the algorithm, we need to show that the linear programming problem {({\ref{eq-pos-linprog-optimization}})}\textendash{}{({\ref{eq-pos-linprog-disjoint2}})} has a solution if and only if there exists \(w\in\mathbb{Z}^k\) such that
\begin{align}
\braket{w,\alpha_i}\amp\geq 1\label{eq-positive}\\
\braket{w,\alpha_i-\alpha_j}\amp\neq 0\label{eq-disjoint}
\end{align}
and that this solution has the smallest possible \(\max_i \braket{w,\alpha_i}\). Furthermore we claim, that if a suitable \(w\in\mathbb{Z}^k\) exists, then there also exists one with
\begin{equation}
\abs{\braket{w,\alpha_i-\alpha_j}}\leq C\label{eq-bounded-abs-diff}
\end{equation}
where \(C\) is the constant defined in step~\ref{alg-pos-apriori-bound}. We prove this claim later.

The smallest maximal weight property is equivalent to {({\ref{eq-pos-linprog-optimization}})} and the first half of {({\ref{eq-pos-linprog-positivity}})}, since a solution will necessarily satisfy \(z = \max_i \braket{w,\alpha_i}\). The latter half of {({\ref{eq-pos-linprog-positivity}})} is exactly the condition {({\ref{eq-positive}})}. The inequalities {({\ref{eq-disjoint}})} and {({\ref{eq-bounded-abs-diff}})} are encoded in the inequalities {({\ref{eq-pos-linprog-disjoint1}})} and {({\ref{eq-pos-linprog-disjoint2}})} using the auxiliary binary variables \(b_{ij}\). Indeed, if we have \(b_{ij}=0\), then the inequalities reduce to
\begin{equation*}
-C\leq \braket{w,\alpha_i-\alpha_j}\leq -1
\end{equation*}
and if \(b_{ij}=1\) then the inequalities reduce to
\begin{equation*}
1\leq \braket{w,\alpha_i-\alpha_j}\leq C\text{.}
\end{equation*}
Therefore it remains to prove the claim about  the additional condition {({\ref{eq-bounded-abs-diff}})}.

First we show that disregarding the other constraints, the system {({\ref{eq-positive}})} has a solution if and only if there exists a solution with \(\abs{\braket{w,\alpha_i}} \leq 1+N2^N\). The normal form of the system {({\ref{eq-positive}})} is given by switching to the variables \(x_i = \braket{w,\alpha_i}-1\), resulting in the system
\begin{equation}
Ax=d,\quad x\geq 0\text{,}\label{eq-pos-linprog-normal-form}
\end{equation}
where the matrix \(A\) is the matrix whose rows are \(e_i+e_j-e_k\in\mathbb{Z}^N\) for each linear relation \(\alpha_i+\alpha_j=\alpha_k\) (dropping linearly dependent conditions) and the right-hand-side vector is \(d=(-1,\ldots,-1)\).

The non-zero components of the basic feasible solutions of the normal form system {({\ref{eq-pos-linprog-normal-form}})} are determined by \(B^{-1}d\) where \(B\) is some invertible square submatrix of \(A\). Writing
\begin{equation*}
B^{-1} = \frac{1}{\det B}\operatorname{Adj}(B)\text{,}
\end{equation*}
where \(\operatorname{Adj}(B)\) is the adjugate matrix of \(B\), we see that integer solutions are determined by the vectors \(\operatorname{Adj}(B)d\). Since each row of \(A\) has the norm bound \(\norm{e_i+e_j-e_k}_\infty\leq 2\) every minor of \(A\) is bounded by \(2^N\). Hence we can bound the norms of integer basic feasible solutions to {({\ref{eq-pos-linprog-normal-form}})} by
\begin{equation*}
\norm{x}_\infty = \norm{\operatorname{Adj}(B)d}_\infty \leq N2^N\text{.}
\end{equation*}
Consequently the original problem {({\ref{eq-positive}})} has a solution \(w\in\mathbb{Z}^k\) if and only if there exists a solution \(w\) with
\begin{equation*}
\abs{\braket{w,\alpha_i}} = \abs{x_i+1} \leq 1+N2^N\text{.}
\end{equation*}

Finally, if \(w\in\mathbb{Z}^k\) is as above, we claim that \(\tilde{w} = M^{k}w + (1,M,\ldots,M^{k-1})\) is a solution to {({\ref{eq-positive}})}\textendash{}{({\ref{eq-bounded-abs-diff}})}.

To see that \(\tilde{w}\) satisfies {({\ref{eq-positive}})}\textendash{}{({\ref{eq-bounded-abs-diff}})}, we consider base-\(M\) expansions of the integers \(\braket{\tilde{w},\alpha_i}\) and \(\braket{\tilde{w},\alpha_i-\alpha_j}\). Since \(\norm{\alpha_i}_\infty\lt M\), we have
\begin{equation*}
\braket{\tilde{w},\alpha_i} = M^{k}\braket{w,\alpha_i} + \sum_{j=1}^{k}M^{j-1}\braket{e_j,\alpha_i}\geq M^{k} - \sum_{j=1}^{k}M^{j-1}\norm{\alpha_i}_\infty\geq 1.
\end{equation*}
A similar computation using \(\norm{\alpha_i-\alpha_j}_\infty\lt M\) gives the bound
\begin{align*}
\abs{\braket{\tilde{w},\alpha_i-\alpha_j}} \amp\leq M^{k}\abs{\braket{w,\alpha_i}}+M^{k}\abs{\braket{w,\alpha_j}} + \sum_{j=1}^{k}M^{j-1}\norm{\alpha_i-\alpha_j}_\infty\\
\amp\leq (2+N2^{N+1})M^{k} + M^{k}=C\text{,}
\end{align*}
showing {({\ref{eq-bounded-abs-diff}})} so it remains to verify {({\ref{eq-disjoint}})}. Expanding in terms of powers of \(M\), we have
\begin{equation*}
\braket{\tilde{w},\alpha_i-\alpha_j}  = \sum_{h=1}^{k}M^{h-1}\braket{e_h,\alpha_i-\alpha_j} \quad\mod{M^k}\text{.}
\end{equation*}
Since \(\abs{\braket{e_h,\alpha_i-\alpha_j}}\leq \norm{\alpha_i-\alpha_j}_\infty\lt M\) it follows that \(\braket{\tilde{w},\alpha_i-\alpha_j}\neq 0\) as soon as at least one \(\braket{e_h,\alpha_i-\alpha_j}\neq 0\). Since \(\alpha_i\neq \alpha_j\), this latter condition is always satisfied for some \(h\).
\end{proof}
\begin{remark}\label{rmk-lightweight-positivity-algorithm}
To obtain any positive realization, there is a much simpler polynomial time algorithm: Solve the linear programming problem
\begin{equation*}
\braket{w,\alpha_i}\geq 1,\quad i=1,\ldots,N
\end{equation*}
in the rational variables \(w\in\mathbb{Q}^k\) and rescale and perturb the solution to
\begin{equation*}
\tilde{w} = M^kw + (1,M,\ldots,M^{k-1})
\end{equation*}
as in the proof of correctness to guarantee distinct weights. Then the push-forward grading by \(f(\tilde{w},\cdot)\colon \mathbb{Z}^k\to\mathbb{Q}\) is again a positive realization of the original grading, but the resulting weights may be quite large.
\end{remark}

\typeout{************************************************}
\typeout{Subsection 3.4 Maximal gradings}
\typeout{************************************************}

\subsection{Maximal gradings}\label{section-maximal-construction}
In this section we provide an algorithm to construct a maximal grading for a Lie algebra \(\mathfrak{g}\) defined over an algebraically closed field \(F\).
\begin{algorithm}[Maximal grading]\label{algorithm-maximal-grading}
Input: A Lie algebra \(\mathfrak{g}\) defined over an algebraically closed field \(F\). Output: A {maximal grading} of \(\mathfrak{g}\).
\begin{enumerate}
\item\label{maxalg-1-derivations}Compute  the derivation algebra \(\der(\mathfrak{g})\). Set \(B=\emptyset\).
\item\label{maxalg-2-centralizer}Compute a basis \(A_1,\ldots,A_n\) for the centralizer \(C(B)\subset \der(\mathfrak{g})\).
\item\label{maxalg-3-extension}Repeat for each basis element \(A_i\), \(i=1,\ldots,n\): compute the Jordan decomposition \(A_i=S_i+N_i\). If the semisimple part \(S_i\) is not in the linear span of \(B\), extend \(B\) by \(S_i\) and go back to step~\ref{maxalg-2-centralizer}.
\item\label{maxalg-4-eigenspace-grading}Determine the \(\mathfrak{t}^*\)-grading \(\mathcal{V}:\mathfrak{g}=\bigoplus_\lambda V_\lambda\) {induced by the torus} \(\mathfrak{t}=\langle B\rangle\).
\item\label{maxalg-5-zk-indexing}Compute and return the {universal realization} of the grading \(\mathcal{V}\).
\end{enumerate}

\end{algorithm}
\begin{remark}\label{remark-maximal-grading-field-extension}
If the Lie algebra \(\mathfrak{g}\) is {defined over} a field \(F\), then \(\der(\mathfrak{g})\) has a maximal torus defined over \(F\). Hence the base field of \(\mathfrak{g}\) does not play a role in {Algorithm~{\ref{algorithm-maximal-grading}}}.
\end{remark}
The rest of the section is devoted to proving the correctness of {Algorithm~{\ref{algorithm-maximal-grading}}} and to explaining the steps in more detail.

Steps~\ref{maxalg-1-derivations} and \ref{maxalg-2-centralizer} are straightforward linear algebra. Step~\ref{maxalg-3-extension} is the core of the algorithm, where the basis \(B\) is extended until the spanned torus is maximal. Directly by construction each additional element \(S_i\in\der(\mathfrak{g})\) is a semisimple derivation that commutes with all the previous elements of \(B\), so \(B\) always spans a torus. The nontrivial part is that this construction guarantees that the resulting torus \(\mathfrak{t}\) spanned by \(B\) is maximal. This is guaranteed by the following lemma.
\begin{lemma}\label{lemma-torus-extension}
Let \(\mathfrak{g}\) be a Lie algebra defined over an algebraically closed field \(F\). Let \(\mathfrak{t}\subset\der(\mathfrak{g})\) be a torus and let \(A_1,\ldots,A_n\) be a basis of the centralizer \(C(\mathfrak{t})\subset \der(\mathfrak{g})\). Let \(A_i = S_i+N_i\) be the Jordan decompositions of each basis element. If \(S_i\in\mathfrak{t}\) for all \(i=1,\ldots,n\), then there do not exist any semisimple derivations in \(C(\mathfrak{t})\setminus\mathfrak{t}\).
\end{lemma}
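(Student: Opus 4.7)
My plan is to argue by contradiction: assume there is a semisimple $D \in C(\mathfrak{t}) \setminus \mathfrak{t}$ and derive a contradiction via uniqueness of Jordan decomposition. Expanding $D = \sum_i c_i A_i$ in the given basis, set $S := \sum_i c_i S_i \in \mathfrak{t}$ and $N := D - S = \sum_i c_i N_i$. A short check shows $[S,N]=0$: each $A_j \in C(\mathfrak{t})$ commutes with every $S_i \in \mathfrak{t}$, so $[S_i, N_j] = [S_i, A_j] - [S_i, S_j] = 0$, and the claim follows by bilinearity. If I can moreover show that $N$ is a nilpotent derivation, then $D = S + N$ is the Jordan decomposition of $D$; since $D$ is semisimple its nilpotent part vanishes, giving $N=0$ and $D = S \in \mathfrak{t}$, contrary to assumption.

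The core task is therefore to prove nilpotence of $N$. I would pass to the weight-space decomposition $\mathfrak{g} = \bigoplus_\lambda \mathfrak{g}_\lambda$ associated with $\mathfrak{t}$. Each $A_i \in C(\mathfrak{t})$ preserves every $\mathfrak{g}_\lambda$, and on $\mathfrak{g}_\lambda$ the operator $A_i|_{\mathfrak{g}_\lambda} = \lambda(S_i)\operatorname{id} + N_i|_{\mathfrak{g}_\lambda}$ is already in Jordan form, with $\lambda(S_i)$ the unique eigenvalue. Writing $[A_i, A_j] = \sum_k \gamma_{ij}^k A_k$ and taking the trace of the restricted commutator, the nilpotent contributions have trace zero and one is left with $\dim(\mathfrak{g}_\lambda)\,\lambda\bigl(\sum_k \gamma_{ij}^k S_k\bigr) = 0$ for every weight $\lambda$. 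Since $F$ has characteristic zero, this forces $\sum_k \gamma_{ij}^k S_k$ to annihilate every weight space, hence to vanish in $\mathfrak{t}$. Consequently $[A_i, A_j] = \sum_k \gamma_{ij}^k N_k$ lies in the subspace $\mathfrak{N} := \langle N_1, \ldots, N_n \rangle$, so $\mathfrak{N}$ is a Lie subalgebra of $C(\mathfrak{t})$ spanned by nilpotent derivations.

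The main obstacle is bridging from this structural statement to the conclusion that every element of $\mathfrak{N}$, and in particular $N$, acts as a nilpotent operator on $\mathfrak{g}$: merely being a Lie subalgebra spanned by nilpotent operators is not by itself enough, as $\mathfrak{sl}_2$ in its standard representation shows. The plan is to apply Engel's theorem to the action of $\mathfrak{N}$ on each weight space $\mathfrak{g}_\lambda$, using the algebraicity of $C(\mathfrak{t}) \subset \der(\mathfrak{g}) = \operatorname{Lie}(\Aut(\mathfrak{g}))$ together with the trace identity above to show that $\mathfrak{N}|_{\mathfrak{g}_\lambda}$ consists entirely of nilpotent operators. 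Then the $N_i|_{\mathfrak{g}_\lambda}$ are simultaneously strictly upper-triangularizable in a common basis, any linear combination, in particular $N|_{\mathfrak{g}_\lambda}$, is nilpotent, and assembling the weight spaces gives nilpotence of $N$, completing the contradiction.
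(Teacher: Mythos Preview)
Your overall shape matches the paper's: write $D = S + N$ with $S = \sum_i c_i S_i \in \mathfrak{t}$ and $N = \sum_i c_i N_i$, check $[S,N]=0$, and reduce everything to showing that $N$ is nilpotent. Your trace argument correctly shows that $\mathfrak{N} = \langle N_1,\ldots,N_n\rangle$ is a Lie subalgebra of $\der(\mathfrak{g})$, and you rightly flag that a Lie algebra spanned by nilpotent operators need not consist of nilpotent operators.

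The gap is your last paragraph: the plan to deduce that ``$\mathfrak{N}|_{\mathfrak{g}_\lambda}$ consists entirely of nilpotent operators'' from ``algebraicity of $C(\mathfrak{t})$ together with the trace identity'' is not an argument, and invoking Engel's theorem afterward is circular, since its hypothesis (that every element acts nilpotently) is precisely what you are trying to establish. The paper closes this with a different tool, namely Lie's theorem. It first observes that $\ad(A_i)|_{C(\mathfrak{t})} = \ad(N_i)|_{C(\mathfrak{t})}$ (as $S_i\in\mathfrak{t}$ is central in $C(\mathfrak{t})$) is nilpotent, concludes that $C(\mathfrak{t})$ is a nilpotent---hence solvable---Lie algebra, and then applies Lie's theorem over the algebraically closed field $F$ to put all of $C(\mathfrak{t})$ simultaneously in upper-triangular form on $\mathfrak{g}$. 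In that basis each $N_i$, being nilpotent, is strictly upper triangular, so any linear combination $N = \sum_i c_i N_i$ is strictly upper triangular as well, and nilpotence of $N$ follows at once. This is the missing ingredient in your plan; your weight-space and trace machinery becomes unnecessary once Lie's theorem is in hand, though your setup could equally be completed by appealing to the weakly-closed-set form of Engel's theorem (the $N_i$ are nilpotent on $\mathfrak{g}$ and span a Lie subalgebra, so the associative algebra they generate in $\mathfrak{gl}(\mathfrak{g})$ is nilpotent).
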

\begin{proof}\label{g:proof:idp22}
First we claim that if \(S_i\in\mathfrak{t}\) for all \(i=1,\ldots,n\), then the centralizer \(C(\mathfrak{t})\) is a nilpotent Lie algebra. By Engel's theorem the centralizer is nilpotent if and only if each map \(\ad(A_i)\colon C(\mathfrak{t})\to C(\mathfrak{t})\) is nilpotent. By definition \(\mathfrak{t}\) is central in \(C(\mathfrak{t})\), so we have
\begin{equation*}
\ad(A_i)=\ad(S_i)+\ad(N_i) = \ad(N_i)\text{.}
\end{equation*}
Since each \(N_i\in\der(\mathfrak{g})\) is nilpotent, so is \(\ad(N_i)\) and the claim follows.

Next, we claim that the Jordan decomposition of a sum of basis elements is
\begin{equation}
A_i+A_j = (S_i+S_j) + (N_i+N_j)\text{.}\label{eq-jordan-decomp-of-sum}
\end{equation}
By assumption \(S_i,S_j\in\mathfrak{t}\), so also \(S_i+S_j\in\mathfrak{t}\) and hence the sum \(S_i+S_j\) is semisimple. Moreover since \(\mathfrak{t}\) is central, \([S_i+S_j,N_i+N_j]=0\), so all that remains is to show that \(N_i+N_j\) is nilpotent.

Since the centralizer \(C(\mathfrak{t})\) is nilpotent, it is also solvable. Since the field \(F\) is an algebraically closed field of characteristic zero, Lie's theorem implies that there exists a basis of \(\mathfrak{g}\) such that all the derivations \(A_i\) are represented by upper triangular matrices. Then \(N_i\) and \(N_j\) are both strictly upper triangular matrices, so also the sum \(N_i+N_j\) is strictly upper triangular, and hence nilpotent.

The Jordan decompositions {({\ref{eq-jordan-decomp-of-sum}})} and the assumption that \(S_i\in\mathfrak{t}\) for all \(i=1,\ldots,n\) imply that the semisimple part of every linear combination of the elements \(A_i\) is also contained in \(\mathfrak{t}\). Hence there cannot exist any semisimple elements in \(C(\mathfrak{t})\setminus\mathfrak{t}\).
\end{proof}
\begin{remark}\label{remark-jordan-decompositions}
The Jordan decompositions required in Step~\ref{maxalg-3-extension} of {Algorithm~{\ref{algorithm-maximal-grading}}} can be efficiently computed using the algorithm given in Appendix~A.2 of \cite{deGraaf-2000-Lie_algebras_theory_and_algorithms}.
\end{remark}
In step~\ref{maxalg-4-eigenspace-grading}, the grading induced by the torus \(\mathfrak{t}\) has a concrete description in terms of the fixed basis \(B\) of \(\mathfrak{t}\). Namely, the basis \(\delta_1,\ldots,\delta_k\) defines an isomorphism \(\mathfrak{t}^*\to F^k\) and hence an {equivalent} {push-forward grading} over \(F^k\). Expanding out the construction of {Lemma~{\ref{lemma-gradings-induced}}} shows that the push-forward grading has the {layers}
\begin{equation*}
V_\lambda = V_{(\lambda_1,\ldots,\lambda_k)}
= \bigcap_{i=1}^k E^{\lambda_i}_{\delta_i}\text{,}
\end{equation*}
where \(E^{\lambda_i}_{\delta_i}\) is the (possibly zero) eigenspace for the eigenvalue \(\lambda_i\) of the derivation \(\delta_i\).

The final part of {Algorithm~{\ref{algorithm-maximal-grading}}} is step~\ref{maxalg-5-zk-indexing}, where we replace the indexing by eigenvalues of the derivations of \(\mathfrak{t}\) with indexing over some \(\mathbb{Z}^k\) given by the universal realization. The precise method was described earlier in {Subsection~{\ref{ssec-univ-gradings}}}. Since the construction of the first three steps of {Algorithm~{\ref{algorithm-maximal-grading}}} leads to a maximal torus of \(\der(\mathfrak{g})\), by {Definition~{\ref{def-maximal-grading}}} the output is a maximal grading of \(\mathfrak{g}\).
\begin{remark}\label{remark-non-algebraically-closed-fields}
The relevance of the assumption that the field \(F\) is algebraically closed is to guarantee that the constructed tori are split, i.e.\@, the semisimple derivations are diagonalizable. The Jordan decomposition and {Lemma~{\ref{lemma-torus-extension}}} then give us an efficient method to construct diagonalizable derivations in \(C(\mathfrak{t})\setminus\mathfrak{t}\).

When the Lie algebra \(\mathfrak{g}\) is defined over a non-algebraically closed field \(F\), it is possible that the computation of a maximal grading over the algebraic closure \(\bar{F}\) using {Algorithm~{\ref{algorithm-maximal-grading}}} outputs a grading that is still defined over \(F\). Then the output is also a maximal grading of the Lie algebra \(\mathfrak{g}\) over \(F\). Consider for example the Lie algebras denoted as \(L_{6,19}(\epsilon)\) in \cite{Cicalo-deGraaf-Schneider-2012-6d_nilpotent_lie_algebras}, which are 6-dimensional Lie algebras defined by the structure coefficients
\begin{align*}
[X_1,X_2] \amp= X_4\amp
[X_1,X_3] \amp= X_5\\
[X_1,X_5] \amp=
[X_2,X_4] = X_6\amp
[X_3,X_5] \amp= \epsilon X_6
\end{align*}
For the Lie algebra \(L_{6,19}(-1)\) the maximal torus computed by {Algorithm~{\ref{algorithm-maximal-grading}}} over the algebraic numbers is also defined over \(\mathbb{Q}\), but this is not the case with the Lie algebra \(L_{6,19}(1)\). Indeed for \(L_{6,19}(1)\), the maximal torus over the rationals is 2-dimensional, but the maximal torus over the algebraic numbers is 3-dimensional.
\end{remark}

\typeout{************************************************}
\typeout{Section 4 Applications}
\typeout{************************************************}

\section{Applications}\label{section-applications}

\typeout{************************************************}
\typeout{Subsection 4.1 Structure from maximal gradings}
\typeout{************************************************}

\subsection{Structure from maximal gradings}\label{applications-structure}
In this subsection we show how {maximal gradings} may be used to find some structural information of Lie algebras. We start by studying how maximal gradings reveal the structure of a direct product. A similar result can be found in 1.6.5 of  \cite{Favre-1973-system_de_poids}.
\begin{example}\label{ex-Heis-times-Heis}
Consider the Lie algebra \(L_{6,22}(1)\) in \cite{Cicalo-deGraaf-Schneider-2012-6d_nilpotent_lie_algebras} with basis \(\{X_1,\dots,X_6 \}\), where the only non-zero bracket relations are
\begin{equation*}
[X_1,X_2]=X_5,\;\; [X_1,X_3]=X_6,\;\; [X_2,X_4]=X_6,\;\; [X_3,X_4]=X_5\text{.}
\end{equation*}
In a basis \(\{Y_1,\dots,Y_6 \}\) {adapted} to the {maximal grading}, the bracket relations are
\begin{equation*}
[Y_1,Y_2]=Y_3,\;\;[Y_4,Y_5]=Y_6\text{.}
\end{equation*}
From these bracket relations we immediately see that the Lie algebra \(L_{6,22}(1)\) is isomorphic to \(L_{3,2} \times L_{3,2}\), where \(L_{3,2}\) is the first Heisenberg Lie algebra.
\end{example}
\label{par-non-degenerate}
We say that a {split torus} \(\mathfrak{t}\subset\der(\mathfrak{g})\) is \terminology{non-degenerate} if the intersection of the kernels of the maps \(D \in \mathfrak{t}\) is trivial. That is, a split torus is non-degenerate if and only if the {\(\mathfrak{t}^*\)-grading} it {induces} does not have zero as a {weight}.

We expect that the following result is known even without the non-degeneracy assumption, however we have been unable to locate a reference. We will therefore give a direct proof of the simpler claim.
\begin{lemma}\label{max-torus-product-kernel}
Let \(\mathfrak{t}_1\subset\der(\mathfrak{g}_1)\) and \(\mathfrak{t}_2\subset\der(\mathfrak{g}_2)\) be non-degenerate maximal {split tori}. Then \(\mathfrak{t}_1 \times \mathfrak{t}_2\) is a maximal split torus in \(\der(\mathfrak{g}_1 \times \mathfrak{g}_2)\).
\end{lemma}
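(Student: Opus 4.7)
The plan is to first understand the shape of an arbitrary derivation of the product, then use the non-degeneracy hypothesis to show that elements of the centralizer of $\mathfrak{t}_1\times\mathfrak{t}_2$ are necessarily block-diagonal, and finally invoke maximality of each $\mathfrak{t}_i$ in $\der(\mathfrak{g}_i)$ to conclude.

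Write any $D\in\der(\mathfrak{g}_1\times\mathfrak{g}_2)$ as a block matrix $D=\begin{pmatrix}D_{11}&D_{12}\\D_{21}&D_{22}\end{pmatrix}$ with $D_{ij}\colon\mathfrak{g}_j\to\mathfrak{g}_i$. Applying the Leibniz rule to pairs of elements (one from each factor, and two from the same factor) one readily checks that $D_{ii}\in\der(\mathfrak{g}_i)$ while the off-diagonal blocks vanish on the respective derived subalgebras and take values in the centers. Since sums and brackets of block-diagonal matrices are block-diagonal and each $\delta_1\oplus\delta_2$ with $\delta_i\in\mathfrak{t}_i$ is simultaneously diagonalizable (the $\mathfrak{t}_i$ being split and commuting on their separate factors), $\mathfrak{t}_1\times\mathfrak{t}_2$ is an abelian subalgebra of simultaneously diagonalizable derivations, hence a split torus in $\der(\mathfrak{g}_1\times\mathfrak{g}_2)$.

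For maximality, I would argue it suffices to show that every semisimple derivation in the centralizer $C(\mathfrak{t}_1\times\mathfrak{t}_2)$ already lies in $\mathfrak{t}_1\times\mathfrak{t}_2$: if $\mathfrak{T}\supset \mathfrak{t}_1\times\mathfrak{t}_2$ were a larger split torus, then $\mathfrak{T}\subset C(\mathfrak{t}_1\times\mathfrak{t}_2)$ and would consist of semisimple elements, forcing $\mathfrak{T}=\mathfrak{t}_1\times\mathfrak{t}_2$. Here the non-degeneracy will play the key role. Since $\mathfrak{t}_1$ is non-degenerate, the kernels of the nonzero weights of $\mathfrak{t}_1$ on $\mathfrak{g}_1$ are finitely many proper subspaces of $\mathfrak{t}_1$; by the characteristic zero assumption their union is not all of $\mathfrak{t}_1$, so one can pick $\delta_1\in\mathfrak{t}_1$ acting invertibly on $\mathfrak{g}_1$. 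Likewise for some $\delta_2\in\mathfrak{t}_2$.

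Now for any $D\in C(\mathfrak{t}_1\times\mathfrak{t}_2)$, commutation with $\delta_1\oplus 0$ yields the three relations $[D_{11},\delta_1]=0$, $\delta_1 D_{12}=0$, and $D_{21}\delta_1=0$ on the appropriate factors; invertibility of $\delta_1$ on $\mathfrak{g}_1$ kills $D_{12}$ and $D_{21}$, and the analogous argument with $0\oplus\delta_2$ confirms block-diagonality. Commuting with all of $\mathfrak{t}_1\oplus 0$ then forces $D_{11}\in C(\mathfrak{t}_1)$, and symmetrically $D_{22}\in C(\mathfrak{t}_2)$. If in addition $D$ is semisimple, so are its block-diagonal pieces $D_{11}$ and $D_{22}$; by the usual fact that an abelian subalgebra of commuting diagonalizable derivations is a split torus, $\mathfrak{t}_i+\langle D_{ii}\rangle$ is a split torus containing $\mathfrak{t}_i$, hence by maximality equals $\mathfrak{t}_i$, giving $D_{ii}\in\mathfrak{t}_i$ and thus $D\in\mathfrak{t}_1\times\mathfrak{t}_2$. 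The only real obstacle here is the bookkeeping with the off-diagonal blocks, but the non-degeneracy hypothesis trivializes this part as soon as an invertible element of each $\mathfrak{t}_i$ is in hand.
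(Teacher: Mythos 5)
Your proof is correct and follows essentially the same route as the paper: write a diagonalizable element of the centralizer of \(\mathfrak{t}_1\times\mathfrak{t}_2\) in block form, use non-degeneracy to kill the off-diagonal blocks, and use maximality of each \(\mathfrak{t}_i\) to place the diagonal blocks in \(\mathfrak{t}_i\). The only (cosmetic) difference is that you exploit non-degeneracy by selecting a single element of \(\mathfrak{t}_1\) acting invertibly on \(\mathfrak{g}_1\), whereas the paper deduces \(\operatorname{Im}(F_1)\subset\bigcap_{D_1\in\mathfrak{t}_1}\ker(D_1)=\{0\}\) directly from the whole torus.
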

\begin{proof}\label{g:proof:idp23}
Denoting \(\mathfrak{t} = \mathfrak{t}_1 \times \mathfrak{t}_2\), let \(D \in C(\mathfrak{t})\) be a diagonalizable derivation in the centralizer \(C(\mathfrak{t})\). To show the maximality of \(\mathfrak{t}\), it suffices to show that \(D \in \mathfrak{t}\). In a basis adapted to the product we may represent
\begin{equation*}
D = \begin{bmatrix}
E_1 \amp F_1 \\ F_2 \amp E_2
\end{bmatrix}\text{,}
\end{equation*}
where \(E_1\in \der(\mathfrak{g}_1)\), \(E_2\in \der(\mathfrak{g}_2)\), and \(F_1\colon \mathfrak{g}_2\to\mathfrak{g}_1\) and \(F_2\colon \mathfrak{g}_1\to \mathfrak{g}_2\) are some linear maps. We are going to demonstrate that \(E_1\in \mathfrak t_1\), \(E_2\in \mathfrak t_2\) and \(F_1=F_2=0\), which would prove that \(D = E_1\times E_2 \in \mathfrak t\).

Let \(D_1 \in \mathfrak t_1\). By assumption \(D\) commutes with \(D_1\times 0 \in \mathfrak t\), so a simple computation shows that \(E_1\) commutes with \(D_1\) and \(D_1 F_1 = 0\). Since \(D_1\) is arbitrary, we obtain \(E_1 \in C(\mathfrak t_1) \). From the fact that \(D_1F_1 = 0\) for every \(D_1 \in \mathfrak t_1\) we get
\begin{equation*}
\operatorname{Im}(F_1) \subset  \bigcap_{D_1\in \mathfrak t_1} \ker(D_1) = \{0\},
\end{equation*}
where the last equality follows from the non-degeneracy of \(\mathfrak t_1\). Consequently, \(F_1 = 0\).

A similar argument shows that \(E_2 \in C(\mathfrak t_2)\) and \(F_2=0\). Since \(D\) is assumed diagonalizable, it follows that \(E_1\) and \(E_2\) are diagonalizable. Then by maximality of \(\mathfrak t_1\) and \(\mathfrak t_2\) we have \(E_1\in \mathfrak t_1\) and \(E_2\in \mathfrak t_2\), which shows that \(D = E_1\times E_2 \in \mathfrak t\).
\end{proof}
For {gradings}, the above lemma implies the following. Suppose \(\mathcal V : \mathfrak g_1 = \bigoplus_{\alpha \in A } V_\alpha\) and \(\mathcal W : \mathfrak g_2 = \bigoplus_{\beta \in B } W_\beta\) are {maximal gradings} of Lie algebras \(\mathfrak g_1\) and \(\mathfrak g_2\), and suppose zero is not a {weight} for either \(\mathcal{V}\) or \(\mathcal{W}\). Then
\begin{equation}
\mathcal V \times \mathcal W : \Bigl(\bigoplus_{(\alpha,0) \in A \times B} V_\alpha \times \{0\} \Bigr) \oplus
\Bigl( \bigoplus_{(0,\beta) \in A \times B} \{0\} \times W_\beta \Bigr)\label{eq-product-grading}
\end{equation}
is a maximal grading of \(\mathfrak g = \mathfrak g_1 \times \mathfrak g_2 \). Indeed, the gradings \(\mathcal{V}\) and \(\mathcal{W}\) are the {universal realizations} of gradings {induced} by the respective maximal {split tori} \(\mathfrak{t}_1\) and \(\mathfrak{t}_2\) of the Lie algebras \(\mathfrak{g}_1\) and \(\mathfrak{g}_2\). By {Lemma~{\ref{max-torus-product-kernel}}}, the product torus \(\mathfrak t_1 \times \mathfrak t_2\) is maximal. The universal realization of the grading induced by \(\mathfrak t_1 \times \mathfrak t_2\) is {equivalent} to the product grading {({\ref{eq-product-grading}})}.

\label{par-detect-product-struct}
Conversely, we can detect when a grading is a product grading. Similarly as in 1.6.4 of  \cite{Favre-1973-system_de_poids}, for a grading \(\mathcal{V}\colon \mathfrak{g} = \bigoplus_{\alpha \in A} V_\alpha \) with weights \(\Omega\subset A\) consider the graph with vertices \(\Omega\) defined as follows: Whenever \([V_\alpha, V_\beta]\neq 0\), we define edges between all the three vertices \(\alpha,\beta,\alpha+\beta\in \Omega\). If the graph \(\Omega\) admits a partition \(\Omega = \Omega_1 \sqcup \Omega_2\) such that no edges exist between \(\Omega_1\) and \(\Omega_2\), then the Lie algebra \(\mathfrak{g}\) is a direct product of the ideals \(\mathfrak{g}_1 = \bigoplus_{\alpha \in \Omega_1} V_\alpha\) and \(\mathfrak{g}_2 = \bigoplus_{\beta \in \Omega_2} V_\beta\). In this situation we say the grading \(\mathcal{V}\) \terminology{detects the product structure} \(\mathfrak{g}_1 \times \mathfrak{g}_2 \) of the Lie algebra \(\mathfrak{g}\). We gather the observations made above into the following proposition.
\begin{proposition}\label{prop-max-grading-detects-product}
If a Lie algebra \(\mathfrak{g}\) is decomposable and the  {maximal gradings} of the factor Lie algebras do not have zero as a {weight}, then the maximal grading of \(\mathfrak{g}\) detects the product structure.
\end{proposition}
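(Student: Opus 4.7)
The plan is to assemble the product construction of maximal tori from Lemma~\ref{max-torus-product-kernel} with the definition of ``detects the product structure'', and then transfer the conclusion to an arbitrary maximal grading of $\mathfrak{g}$ via the uniqueness up to equivalence stated in Remark~\ref{rmk-max-grading-unique}.

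First I would write $\mathfrak{g} = \mathfrak{g}_1 \times \mathfrak{g}_2$ and fix maximal gradings $\mathcal{V} \colon \mathfrak{g}_1 = \bigoplus_{\alpha \in A} V_\alpha$ and $\mathcal{W} \colon \mathfrak{g}_2 = \bigoplus_{\beta \in B} W_\beta$. By Definition~\ref{def-maximal-grading}, these are universal realizations of gradings induced by maximal split tori $\mathfrak{t}_1 \subset \der(\mathfrak{g}_1)$ and $\mathfrak{t}_2 \subset \der(\mathfrak{g}_2)$. The hypothesis that $0$ is not a weight of either $\mathcal{V}$ or $\mathcal{W}$ is exactly the non-degeneracy of $\mathfrak{t}_1$ and $\mathfrak{t}_2$ in the sense defined just before Lemma~\ref{max-torus-product-kernel}, so that lemma yields that $\mathfrak{t}_1 \times \mathfrak{t}_2$ is a maximal split torus of $\der(\mathfrak{g})$. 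Its induced grading coincides (after passing to the universal realization) with the product grading $\mathcal{V} \times \mathcal{W}$ of~(\ref{eq-product-grading}), and is therefore a maximal grading of $\mathfrak{g}$.

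Next I would exhibit the required partition of the weight set. Writing $\Omega_{\mathcal{V}}$ and $\Omega_{\mathcal{W}}$ for the weight sets of $\mathcal{V}$ and $\mathcal{W}$, set
\begin{equation*}
\Omega_1 = \{(\alpha, 0) : \alpha \in \Omega_{\mathcal{V}}\}, \qquad \Omega_2 = \{(0, \beta) : \beta \in \Omega_{\mathcal{W}}\}.
\end{equation*}
Since $0 \notin \Omega_{\mathcal{V}}$ and $0 \notin \Omega_{\mathcal{W}}$, these sets are disjoint, so they partition the weight set of $\mathcal{V} \times \mathcal{W}$. The layers indexed by $\Omega_1$ lie inside $\mathfrak{g}_1 \times \{0\}$ and those indexed by $\Omega_2$ lie inside $\{0\} \times \mathfrak{g}_2$, so all cross-brackets vanish and no edge of the weight graph joins $\Omega_1$ to $\Omega_2$. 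By the definition given before the proposition, this is exactly what it means for $\mathcal{V} \times \mathcal{W}$ to detect the product structure $\mathfrak{g}_1 \times \mathfrak{g}_2$.

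The only subtle point, and the closest thing to an obstacle, is to observe that ``detecting a product structure'' is genuinely a property of the equivalence class of a grading, so that the conclusion about the specific product grading $\mathcal{V} \times \mathcal{W}$ transfers to any maximal grading of $\mathfrak{g}$ via Remark~\ref{rmk-max-grading-unique}. This is immediate from Definition~\ref{def-equivalent-group-gradings}: an equivalence consists of a Lie algebra automorphism $\Phi$ together with a group isomorphism $f$, both of which send layers bijectively to layers and preserve the vanishing or non-vanishing of each bracket $[V_\alpha, V_\beta]$; hence they carry the weight graph to an isomorphic graph with vertices relabelled by $f$, and the partition $\Omega_1 \sqcup \Omega_2$ is transported to a partition of the weight set of the equivalent grading with the same edgeless-between-parts property.
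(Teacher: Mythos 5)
Your proof is correct and follows essentially the same route as the paper, which simply assembles Lemma~\ref{max-torus-product-kernel}, the product grading~(\ref{eq-product-grading}), and the weight-graph definition of detecting a product structure. Your explicit check that detecting a product structure is invariant under equivalence of gradings (so the conclusion transfers from the product grading to any maximal grading via Remark~\ref{rmk-max-grading-unique}) makes precise a step the paper leaves implicit.
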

We remark that while maximal gradings are able to detect product structures as indicated above, they are not able to detect some other algebraic properties. The Lie algebra \(L_{6,24}(1)\) in \cite{Cicalo-deGraaf-Schneider-2012-6d_nilpotent_lie_algebras} provides examples of two such phenomena. First, the layers of its maximal grading are not contained in the terms of its lower central series (this behavior can be also achieved by examples where the maximal grading is very coarse). Secondly, this Lie algebra has a \terminology{nice basis} (see \cite{conti-rossi-2019-nice_lie_groups} for the precise definition and its motivation), but it can be shown that no basis {adapted} to a maximal grading is nice.

Despite these negative results, maximal gradings have another structural application in simplifying the problem of deciding whether two Lie algebras are isomorphic or not.
\begin{remark}\label{rmk-max-graded-automorphisms}
If two Lie algebras \(\mathfrak g_1 \) and \(\mathfrak g_2\) are isomorphic, then any isomorphism maps the {maximal grading} of \(\mathfrak{g}_1\) to a maximal grading of \(\mathfrak{g}_2\). Therefore, if the maximal gradings of \(\mathfrak{g}_1\) and \(\mathfrak{g}_2\) are given, then deciding if \(\mathfrak{g}_1\) and \(\mathfrak{g}_2\) are isomorphic reduces to determining the existence of an isomorphism between the maximal gradings. In many cases this is significantly easier than naively solving the original isomorphism problem. For example, in low dimensions, the majority of the {layers} of the maximal grading are one-dimensional, in which case searching for possible isomorphisms becomes a combinatorial problem.
\end{remark}

\typeout{************************************************}
\typeout{Subsection 4.2 Classification of gradings in low dimension}
\typeout{************************************************}

\subsection{Classification of gradings in low dimension}\label{applications-classification}
Following the strategy outlined in \cite[Section~3.7]{Kochetov-2009-gradings_on_simple_lie_algebras_survey}, we classify {universal realizations} of {torsion-free gradings} in nilpotent Lie algebras of dimension up to 7 over \(\mathbb{C}\), apart from a few uncountable families of 7 dimensional Lie algebras. Regarding the uncountable families, we follow the study carried out in \cite{magnin-huge-book} and focus on those singular values of the complex parameter \(\lambda\) for which either the Lie algebra cohomology or the adjoint cohomology have different dimensions compared to the rest of the Lie algebras in the same family. We also include a few examples corresponding to non-singular values. The complete classification of the gradings can be found in \cite{software-lie_algebra_gradings}; here we will give a brief overview of Lie algebras of dimension up to 6.

The main part of the classification is the construction of a {maximal grading} ({Algorithm~{\ref{algorithm-maximal-grading}}}) and the enumeration of torsion-free gradings ({Proposition~{\ref{prop-Quot-gradings-are-everything}}}). Since all the Lie algebras we consider are {defined over} the algebraic numbers, we are free to make use of our computer algebra implementation, as discussed in {Subsection~{\ref{ssec-base-fields}}}. As a starting point we used the classifications of nilpotent Lie algebras given in \cite{deGraaf-2007-dim_6_nilpotent_lie_algebras} for dimensions less than 6, \cite{Cicalo-deGraaf-Schneider-2012-6d_nilpotent_lie_algebras} for dimension 6, and \cite{gong} for dimension 7. The classification up to dimension 6 has a pre-existing computer implementation in the GAP package \cite{LieAlgDB2.2}. Since these Lie algebras are not always given in a basis adapted to any maximal grading, we first compute the maximal grading using the methods described in {Subsection~{\ref{section-maximal-construction}}} and switch to a basis adapted to the resulting grading.

The presentations we use for the nilpotent Lie algebras up to dimension 6 are listed in {Table~{\ref{table-maximal-bases}}}. The Lie brackets \([Y_a,Y_b]=Y_c\) are listed in the condensed form \(ab=c\). Lie algebras \(\mathfrak{g}\times\mathbb{C}^d\) with abelian factors have identical structure coefficients with the nonabelian factor \(\mathfrak{g}\) and are omitted from the list. For example \(L_{4,2}=L_{3,2}\times\mathbb{C}\) has the basis \(Y_1,\ldots,Y_4\) with the bracket relation \([Y_1,Y_2]=Y_3\) from \(L_{3,2}\).
\begin{table}[hbtp]
\centering
{
\begin{tabular}[t]{llllllll}
\(L_{3,2}\)&\(12=3\)&&&&&&\tabularnewline[0pt]
\(L_{4,3}\)&\(12=3\)&\(13=4\)&&&&&\tabularnewline[0pt]
\(L_{5,4}\)&\(41=5\)&\(23=5\)&&&&&\tabularnewline[0pt]
\(L_{5,5}\)&\(13=4\)&\(14=5\)&\(32=5\)&&&&\tabularnewline[0pt]
\(L_{5,6}\)&\(12=3\)&\(13=4\)&\(14=5\)&\(23=5\)&&&\tabularnewline[0pt]
\(L_{5,7}\)&\(12=3\)&\(13=4\)&\(14=5\)&&&&\tabularnewline[0pt]
\(L_{5,8}\)&\(12=3\)&\(14=5\)&&&&&\tabularnewline[0pt]
\(L_{5,9}\)&\(12=3\)&\(23=4\)&\(13=5\)&&&&\tabularnewline[0pt]
\(L_{6,10}\)&\(23=4\)&\(51=6\)&\(24=6\)&&&&\tabularnewline[0pt]
\(L_{6,11}\)&\(12=3\)&\(13=5\)&\(15=6\)&\(23=6\)&\(24=6\)&&\tabularnewline[0pt]
\(L_{6,12}\)&\(23=4\)&\(24=5\)&\(31=6\)&\(25=6\)&&&\tabularnewline[0pt]
\(L_{6,13}\)&\(13=4\)&\(14=5\)&\(32=5\)&\(15=6\)&\(42=6\)&&\tabularnewline[0pt]
\(L_{6,14}\)&\(12=3\)&\(13=4\)&\(14=5\)&\(23=5\)&\(25=6\)&\(43=6\)&\tabularnewline[0pt]
\(L_{6,15}\)&\(12=3\)&\(13=4\)&\(14=5\)&\(23=5\)&\(15=6\)&\(24=6\)&\tabularnewline[0pt]
\(L_{6,16}\)&\(12=3\)&\(13=4\)&\(14=5\)&\(25=6\)&\(43=6\)&&\tabularnewline[0pt]
\(L_{6,17}\)&\(21=3\)&\(23=4\)&\(24=5\)&\(13=6\)&\(25=6\)&&\tabularnewline[0pt]
\(L_{6,18}\)&\(12=3\)&\(13=4\)&\(14=5\)&\(15=6\)&&&\tabularnewline[0pt]
\(L_{6,19}(-1)\)&\(12=3\)&\(14=5\)&\(25=6\)&\(43=6\)&&&\tabularnewline[0pt]
\(L_{6,20}\)&\(12=3\)&\(14=5\)&\(15=6\)&\(23=6\)&&&\tabularnewline[0pt]
\(L_{6,21}(-1)\)&\(12=3\)&\(23=4\)&\(13=5\)&\(14=6\)&\(25=6\)&&\tabularnewline[0pt]
\(L_{6,22}(0)\)&\(24=5\)&\(41=6\)&\(23=6\)&&&&\tabularnewline[0pt]
\(L_{6,22}(1)\)&\(12=3\)&\(45=6\)&&&&&\tabularnewline[0pt]
\(L_{6,23}\)&\(12=3\)&\(14=5\)&\(15=6\)&\(42=6\)&&&\tabularnewline[0pt]
\(L_{6,24}(0)\)&\(13=4\)&\(34=5\)&\(14=6\)&\(32=6\)&&&\tabularnewline[0pt]
\(L_{6,24}(1)\)&\(12=3\)&\(23=5\)&\(24=5\)&\(13=6\)&&&\tabularnewline[0pt]
\(L_{6,25}\)&\(12=3\)&\(13=4\)&\(15=6\)&&&&\tabularnewline[0pt]
\(L_{6,26}\)&\(12=3\)&\(24=5\)&\(14=6\)&&&&\tabularnewline[0pt]
\(L_{6,27}\)&\(12=3\)&\(13=4\)&\(25=6\)&&&&\tabularnewline[0pt]
\(L_{6,28}\)&\(12=3\)&\(23=4\)&\(13=5\)&\(15=6\)&&&
\end{tabular}
}

\caption{Lie algebras of dimension up to 6 over \(\mathbb{C}\) in a basis adapted to a maximal grading.}
\label{table-maximal-bases}
\end{table}

\label{par-rank-and-type}
With all the maximal gradings computed, we enumerate universal realizations of all torsion-free gradings as in {Proposition~{\ref{prop-Quot-gradings-are-everything}}}. For the classification up to equivalence, we first introduce some easy-to-check invariants for gradings. Recall that by {Lemma~{\ref{lemma-universal-realization-Zk}}}, the grading groups of the obtained gradings are isomorphic to some groups \(\mathbb{Z}^k\). The dimension \(k\) is called the \terminology{rank} of the grading. We recall also an invariant from \cite[Section~3.2]{Kochetov-2009-gradings_on_simple_lie_algebras_survey}: the \terminology{type} of a grading is the tuple \((n_1,n_2,\ldots,n_k)\), where \(k\) is the dimension of the largest layer, and each \(n_i\) is the number of \(i\)-dimensional layers.

From the full list of {torsion-free gradings}, we initially collect together {gradings} using the following criteria:
\begin{enumerate}
\item{}The ranks of the gradings are equal.
\item{}The types of the gradings are equal.
\item{}There exists a homomorphism between the grading groups of the {universal realizations} mapping layers to layers of equal dimensions.
\end{enumerate}
In this way we get for each Lie algebra families \(I_1,I_2,\ldots,I_k\) of gradings such that the gradings of \(I_i\) and \(I_j\) are not {equivalent} for \(i\neq j\).

To compute the precise equivalence classes, we naively check if the gradings within each family \(I_i\) are equivalent. For each pair of \(\mathbb{Z}^k\)-gradings \(\mathfrak{g}=\bigoplus_{\alpha\in \mathbb{Z}^k} V_\alpha\) and \(\mathfrak{g}=\bigoplus_{\beta\in \mathbb{Z}^k} W_\beta\), there are usually only a few homomorphisms \(f\colon \mathbb{Z}^k\to \mathbb{Z}^k\) with \(\dim V_\alpha=\dim W_{f(\beta)}\). For each such homomorphism \(f\), we need to check whether there exists an automorphism \(\Phi\in\Aut(\mathfrak{g})\) such that \(\Phi(V_\alpha)=W_{f(\beta)}\) for all weights \(\alpha\). These identities define a system of quadratic equations over algebraic numbers. Since we are working over an algebraically closed field, the system has no solution if and only if 1 is contained in the ideal defined by the polynomial equations. The dimensions of the layers are generally quite small in the cases we need to check, so Gröbner basis methods work well.

For nilpotent Lie algebras of dimension up to 6, an overview of our classification of gradings is compiled in {Table~{\ref{table-grading-info}}}. For each Lie algebra, we list its label in the classification of \cite{Cicalo-deGraaf-Schneider-2012-6d_nilpotent_lie_algebras}, the rank of its {maximal grading} (\(k\)), whether it is {stratifiable} or not (s?), the number of {gradings} (\#), and the number of gradings with a {positive} {realization} (\(\#\mathbb{Z}_+\)).
\begin{table}[hbtp]
\centering
{
\begin{tabular}[t]{lllll}
Name&\(k\)&s?&\#&\#\(\mathbb{Z}_+\)\tabularnewline\hline
\(L_{2,1}\)&\(2\)&\(\checkmark\)&\(2\)&\(2\)\tabularnewline[0pt]
\(L_{3,1}\)&\(3\)&\(\checkmark\)&\(3\)&\(3\)\tabularnewline[0pt]
\(L_{3,2}\)&\(2\)&\(\checkmark\)&\(4\)&\(2\)\tabularnewline[0pt]
\(L_{4,1}\)&\(4\)&\(\checkmark\)&\(5\)&\(5\)\tabularnewline[0pt]
\(L_{4,2}\)&\(3\)&\(\checkmark\)&\(11\)&\(6\)\tabularnewline[0pt]
\(L_{4,3}\)&\(2\)&\(\checkmark\)&\(6\)&\(2\)\tabularnewline[0pt]
\(L_{5,1}\)&\(5\)&\(\checkmark\)&\(7\)&\(7\)\tabularnewline[0pt]
\(L_{5,2}\)&\(4\)&\(\checkmark\)&\(26\)&\(15\)\tabularnewline[0pt]
\(L_{5,3}\)&\(3\)&\(\checkmark\)&\(22\)&\(9\)\tabularnewline[0pt]
\(L_{5,4}\)&\(3\)&\(\checkmark\)&\(9\)&\(4\)\tabularnewline[0pt]
\(L_{5,5}\)&\(2\)&\(\)&\(7\)&\(3\)\tabularnewline[0pt]
\(L_{5,6}\)&\(1\)&\(\)&\(2\)&\(1\)\tabularnewline[0pt]
\(L_{5,7}\)&\(2\)&\(\checkmark\)&\(7\)&\(2\)\tabularnewline[0pt]
\(L_{5,8}\)&\(3\)&\(\checkmark\)&\(14\)&\(6\)\tabularnewline[0pt]
\(L_{5,9}\)&\(2\)&\(\checkmark\)&\(5\)&\(2\)\tabularnewline[0pt]
\(L_{6,1}\)&\(6\)&\(\checkmark\)&\(11\)&\(11\)\tabularnewline[0pt]
\(L_{6,2}\)&\(5\)&\(\checkmark\)&\(52\)&\(31\)\tabularnewline[0pt]
\(L_{6,3}\)&\(4\)&\(\checkmark\)&\(60\)&\(27\)\tabularnewline[0pt]
\(L_{6,4}\)&\(4\)&\(\checkmark\)&\(29\)&\(13\)\tabularnewline[0pt]
\(L_{6,5}\)&\(3\)&\(\)&\(29\)&\(15\)\tabularnewline[0pt]
\(L_{6,6}\)&\(2\)&\(\)&\(8\)&\(6\)\tabularnewline[0pt]
\(L_{6,7}\)&\(3\)&\(\checkmark\)&\(31\)&\(11\)\tabularnewline[0pt]
\(L_{6,8}\)&\(4\)&\(\checkmark\)&\(52\)&\(25\)
\end{tabular}\hspace{1em}
\begin{tabular}[t]{lllll}
Name&\(k\)&s?&\#&\#\(\mathbb{Z}_+\)\tabularnewline\hline
\(L_{6,9}\)&\(3\)&\(\checkmark\)&\(17\)&\(8\)\tabularnewline[0pt]
\(L_{6,10}\)&\(3\)&\(\)&\(23\)&\(8\)\tabularnewline[0pt]
\(L_{6,11}\)&\(1\)&\(\)&\(2\)&\(1\)\tabularnewline[0pt]
\(L_{6,12}\)&\(2\)&\(\)&\(9\)&\(4\)\tabularnewline[0pt]
\(L_{6,13}\)&\(2\)&\(\)&\(8\)&\(3\)\tabularnewline[0pt]
\(L_{6,14}\)&\(1\)&\(\)&\(2\)&\(1\)\tabularnewline[0pt]
\(L_{6,15}\)&\(1\)&\(\)&\(2\)&\(1\)\tabularnewline[0pt]
\(L_{6,16}\)&\(2\)&\(\checkmark\)&\(8\)&\(2\)\tabularnewline[0pt]
\(L_{6,17}\)&\(1\)&\(\)&\(2\)&\(1\)\tabularnewline[0pt]
\(L_{6,18}\)&\(2\)&\(\checkmark\)&\(8\)&\(2\)\tabularnewline[0pt]
\(L_{6,19}(-1)\)&\(3\)&\(\checkmark\)&\(21\)&\(6\)\tabularnewline[0pt]
\(L_{6,20}\)&\(2\)&\(\checkmark\)&\(8\)&\(3\)\tabularnewline[0pt]
\(L_{6,21}(-1)\)&\(2\)&\(\checkmark\)&\(6\)&\(2\)\tabularnewline[0pt]
\(L_{6,22}(0)\)&\(3\)&\(\checkmark\)&\(18\)&\(8\)\tabularnewline[0pt]
\(L_{6,22}(1)\)&\(4\)&\(\checkmark\)&\(32\)&\(15\)\tabularnewline[0pt]
\(L_{6,23}\)&\(2\)&\(\)&\(8\)&\(4\)\tabularnewline[0pt]
\(L_{6,24}(0)\)&\(2\)&\(\)&\(8\)&\(4\)\tabularnewline[0pt]
\(L_{6,24}(1)\)&\(2\)&\(\)&\(5\)&\(2\)\tabularnewline[0pt]
\(L_{6,25}\)&\(3\)&\(\checkmark\)&\(29\)&\(11\)\tabularnewline[0pt]
\(L_{6,26}\)&\(3\)&\(\checkmark\)&\(10\)&\(5\)\tabularnewline[0pt]
\(L_{6,27}\)&\(3\)&\(\checkmark\)&\(32\)&\(13\)\tabularnewline[0pt]
\(L_{6,28}\)&\(2\)&\(\checkmark\)&\(8\)&\(3\)
\end{tabular}
}

\caption{Gradings of Lie algebras up to dimension 6 over \(\mathbb{C}\)}
\label{table-grading-info}
\end{table}

\begin{example}\label{example-gradings-of-heis-x-r}
We present our method of classifying all the possible gradings explicitly in the simple case of the Lie algebra \(L_{4,2}\) given in the basis \(Y_1,\ldots,Y_4\) with the only nonzero bracket \([Y_1,Y_2]=Y_3\). The {maximal grading} is over \(\mathbb{Z}^3\) with the {layers}
\begin{align*}
V_{(1,0,0)}\amp=\langle Y_1\rangle,\amp
V_{(0,1,0)}\amp=\langle Y_2\rangle,\amp
V_{(1,1,0)}\amp=\langle Y_3\rangle,\amp
V_{(0,0,1)}\amp=\langle Y_4\rangle\text{.}
\end{align*}

Ignoring scalar multiples, the difference set \(\Omega-\Omega\) of weights consists of the 6 elements \(e_1\), \(e_2\), \(e_1-e_2\), \(e_1-e_3\), \(e_2-e_3\), and \(e_1+e_2-e_3\), where \(e_1,e_2,e_3\) are the standard basis elements of the lattice \(\mathbb{Z}^3\). Subsets of these points span the trivial subspace, 6 one-dimensional subspaces, 7 two-dimensional subspaces \({\langle e_1,e_2\rangle}\), \({\langle e_1,e_3 \rangle}\), \({\langle e_2,e_3 \rangle}\), \({\langle e_1 - e_3,e_2 \rangle}\), \({\langle e_1,e_2 - e_3 \rangle}\), \({\langle e_1 - e_3,e_2 - e_3 \rangle}\), \({\langle 2e_1 - e_3,2e_2 - e_3 \rangle}\), and the full space \(\mathbb{Z}^3\).

In this case, each of these 15 subspaces \(S\) defines a torsion-free quotient \(\mathbb{Z}^3/S\). For instance parametrizing the quotient \(\pi\colon\mathbb{Z}^3\to\mathbb{Z}^3/\langle e_1 - e_3,e_2-e_3 \rangle\) as \(\mathbb{Z}\) using the complementary line \(\mathbb{Z}e_3\) gives the weights
\begin{equation*}
\pi(e_1) = \pi(e_2) = \pi(e_3) = 1,\quad \pi(e_1+e_2) = 2\text{,}
\end{equation*}
so a {push-forward grading} for the quotient \(\mathbb{Z}^3/\langle e_1 - e_3,e_2-e_3 \rangle\) is the {\(\mathbb{Z}\)-grading}
\begin{equation*}
V_1 = \langle Y_1, Y_2, Y_4\rangle, \quad V_2=\langle Y_3\rangle\text{.}
\end{equation*}

To determine the distinct {equivalence} classes out of the 15 gradings, we first consider the simple criteria listed earlier. The trivial grading and the {maximal grading} are distinguished by the rank. The six \(\mathbb{Z}^2\)-gradings all have 2 one-dimensional layers and 1 two-dimensional layer. There exists a homomorphism that preserves the dimensions of the layers for two pairs of the gradings: one between the quotients by \(\langle e_1 \rangle\) and \(\langle e_2\rangle\), and one between the quotients by \(\langle e_1-e_3\rangle \) and \(\langle e_2-e_3\rangle\).

Out of the seven \(\mathbb{Z}\)-gradings, the four quotients by
\begin{equation*}
{\langle e_1,e_2\rangle},
{\langle e_1 - e_3,e_2 \rangle},
{\langle e_1,e_2 - e_3 \rangle},
{\langle e_1 - e_3,e_2 - e_3 \rangle}
\end{equation*}
define gradings with 1 one-dimensional layer and 1 three-dimensional layer, and the three quotients by
\begin{equation*}
{\langle e_1,e_3 \rangle},
{\langle e_2,e_3 \rangle},
{\langle 2e_1 - e_3,2e_2 - e_3 \rangle}
\end{equation*}
define gradings with 2 two-dimensional layers. In both families there is exactly one pair of gradings admitting a homomorphism: the pair \(\langle e_1 - e_3,e_2 \rangle\) and \(\langle e_1,e_2 - e_3 \rangle\), and the pair \(\langle e_1,e_3 \rangle\) and \(\langle e_2,e_3 \rangle\).

In all of these cases, the homomorphism between the quotients is induced by the isomorphism \(f\colon\mathbb{Z}^3\to\mathbb{Z}^3\) swapping \(e_1\) and \(e_2\). All of the mentioned pairs of \(\mathbb{Z}^2\)- and \(\mathbb{Z}\)-gradings are in fact equivalent, since there is a corresponding Lie algebra automorphism swapping the basis elements \(Y_1\) and \(Y_2\) that preserves the subspaces \(\langle Y_3\rangle\) and \(\langle Y_4\rangle\). This reduces the list of 15 gradings down to 11 distinct equivalence classes. {Universal realizations} for each equivalence class of {torsion-free gradings} are listed in {Table~{\ref{table-grading-example}}}.
\begin{table}[hbtp]
\centering
{
\begin{tabular}[t]{lll}
rank&type&layers\tabularnewline\hline
3&(4)&\(V_{1,0,0}\oplus V_{0,1,0}\oplus V_{1,1,0}\oplus V_{0,0,1} = \langle Y_1\rangle\oplus \langle Y_2\rangle\oplus \langle Y_3\rangle\oplus \langle Y_4\rangle\)\tabularnewline[0pt]
2&(2, 1)&\(V_{0,0}\oplus V_{1,0}\oplus V_{0,1}  = \langle Y_2\rangle\oplus \langle Y_4\rangle\oplus \langle Y_1,Y_3\rangle\)\tabularnewline[0pt]
2&(2, 1)&\(V_{1,0}\oplus V_{0,1}\oplus V_{0,2} = \langle Y_4\rangle\oplus \langle Y_1,Y_2\rangle\oplus \langle Y_3\rangle\)\tabularnewline[0pt]
2&(2, 1)&\(V_{1,0}\oplus V_{0,1}\oplus V_{1,1} = \langle Y_1,Y_4\rangle\oplus \langle Y_2\rangle\oplus \langle Y_3\rangle\)\tabularnewline[0pt]
2&(2, 1)&\(V_{1,-1}\oplus V_{0,1}\oplus V_{1,0} = \langle Y_1\rangle\oplus \langle Y_2\rangle\oplus \langle Y_3,Y_4\rangle\)\tabularnewline[0pt]
1&(0, 2)&\(V_{0}\oplus V_{1} = \langle Y_1,Y_4\rangle \oplus \langle Y_2,Y_3\rangle\)\tabularnewline[0pt]
1&(0, 2)&\(V_{1}\oplus V_{2} = \langle Y_1,Y_2\rangle\oplus \langle Y_3,Y_4\rangle\)\tabularnewline[0pt]
1&(1, 0, 1)&\(V_{1}\oplus V_{2} = \langle Y_1,Y_2,Y_4\rangle\oplus \langle Y_3\rangle\)\tabularnewline[0pt]
1&(1, 0, 1)&\(V_{0}\oplus V_{1} = \langle Y_1,Y_2,Y_3\rangle \oplus \langle Y_4\rangle\)\tabularnewline[0pt]
1&(1, 0, 1)&\(V_{0}\oplus V_{1} = \langle Y_1\rangle\oplus \langle Y_2,Y_3,Y_4\rangle\)\tabularnewline[0pt]
0&(0, 0, 0, 1)&\(V_{0} = \langle Y_1,Y_2,Y_3,Y_4\rangle\)
\end{tabular}
}

\caption{Gradings of the Lie algebra \(L_{4,2}\)}
\label{table-grading-example}
\end{table}

\end{example}

\typeout{************************************************}
\typeout{Subsection 4.3 Enumerating Heintze groups}
\typeout{************************************************}

\subsection{Enumerating Heintze groups}\label{applications-heintze}
In this section, we present how knowing a maximal grading of a given nilpotent Lie algebra \(\mathfrak{g}\) can be used to determine a list of Heintze groups over \(\mathfrak{g}\). However, note that when working in the non-algebraically closed field \(\mathbb{R}\), we cannot in general obtain the maximal grading  using {Algorithm~{\ref{algorithm-maximal-grading}}}, see {Remark~{\ref{remark-non-algebraically-closed-fields}}}.
\begin{definition}\label{def-heintze}
A \terminology{Heintze group} is a simply connected Lie group over \(\mathbb{R}\) whose Lie algebra is a semidirect product of a nilpotent Lie algebra \(\mathfrak{g}\) and \(\mathbb{R}\) via a derivation \(\alpha \in \der(\mathfrak{g})\) whose eigenvalues have strictly positive real parts.
\end{definition}
Positive gradings for a given Lie algebra are naturally identified with diagonalizable derivations with strictly positive eigenvalues, see {Subsection~{\ref{sec-gradings-by-tori}}}. Hence, to any positively graded Lie algebra \(\mathfrak{g}\) we may associate a Heintze group over \(\mathfrak{g}\). We shall call these groups \terminology{diagonal Heintze groups}.

The quasi-isometric classification of Heintze groups reduces to the study of so called \terminology{purely real Heintze groups}, for which the associated derivation has real eigenvalues. Purely real Heintze groups are equivalent to diagonal Heintze groups under a slightly weaker notion of equivalence (sublinear biLipschitz-equivalence, see Theorem~ 1.2 of \cite{Cornulier-2019-sublinear_bilipschitz_equivalence} and Theorem~ 3.2 of \cite{pallier20-sublinear-qc}). Moreover, by \cite{Carrasco-Sequeira-2017-qi_invariants_associated_to_a_heintze_group} if two diagonal Heintze groups are quasi-isometric, then their associated derivations are proportional. Hence, the quasi-isometric classification problem of diagonal Heintze groups can be approached by treating the algebraic problem of finding all the possible derivations defining non-isomorphic diagonal Heintze groups.

{Proposition~{\ref{prop-noneqgradings-produce-diff-heintze}}} is a tool for tackling the above mentioned algebraic problem using positive gradings. We will prove this result later in this section after discussing its role in the enumeration of Heintze groups.
\begin{proposition}\label{prop-noneqgradings-produce-diff-heintze}
Let \(\mathfrak{g}\) be a nilpotent Lie algebra over \(\mathbb{R}\) and \(\alpha, \beta \in \der(\mathfrak{g})\) diagonalizable derivations with strictly positive eigenvalues. If \(\alpha\) and \(\beta\) define isomorphic Heintze groups, then they define {equivalent} \(\mathbb{R}\)-gradings.
\end{proposition}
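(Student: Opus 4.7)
The plan is to dissect an isomorphism of Heintze groups into Lie algebra data and then correct it by an inner automorphism to obtain the required equivalence of $\mathbb{R}$-gradings. By simple connectedness, an isomorphism of Heintze groups yields a Lie algebra isomorphism $\varphi \colon \mathfrak{s}_\alpha \to \mathfrak{s}_\beta$, where $\mathfrak{s}_\alpha := \mathfrak{g} \rtimes_\alpha \mathbb{R}$ and analogously for $\beta$. Because $\alpha$ and $\beta$ are invertible with eigenvalues of strictly positive real parts, a short trace computation shows that $\mathfrak{g}$ is the nilradical of both $\mathfrak{s}_\alpha$ and $\mathfrak{s}_\beta$. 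Hence $\varphi$ restricts to some $\Phi \in \Aut(\mathfrak{g})$, and on the quotient $\mathbb{R}$ it acts as multiplication by some $c \neq 0$. Writing $t_\alpha, t_\beta$ for the generators of the respective $\mathbb{R}$-summands (so that $\ad t_\alpha = \alpha$ and $\ad t_\beta = \beta$), we have $\varphi(t_\alpha) = ct_\beta + v$ for some $v \in \mathfrak{g}$, and the bracket relation $\varphi([t_\alpha, X]) = [\varphi(t_\alpha), \varphi(X)]$ for $X \in \mathfrak{g}$ expands to
\[ \Phi \alpha \Phi^{-1} = c\beta + \ad v \quad \text{in } \der(\mathfrak{g}). \]

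The heart of the argument is to absorb the extraneous $\ad v$ into an inner automorphism of $\mathfrak{s}_\beta$. I claim there exists $u \in \mathfrak{g}$ such that $e^{\ad u}(ct_\beta) = ct_\beta + v$. Indeed, the map $F(u) := e^{\ad u}(ct_\beta) - ct_\beta$ is a well-defined polynomial map $\mathfrak{g} \to \mathfrak{g}$, the series terminating by nilpotency of $\mathfrak{g}$, and it has the form $F(u) = -c\beta(u) + R(u)$ with $R(u) \in [\mathfrak{g}, \mathfrak{g}]$. The linear part $-c\beta$ is invertible, and $\beta$, being a derivation, preserves the lower central series $\mathfrak{g}^{(i)}$ and induces invertible maps on each successive quotient $\mathfrak{g}^{(i)}/\mathfrak{g}^{(i+1)}$. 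This enables solving $F(u) = v$ iteratively: starting from $u_0 = 0$, correct $u_k$ by some $\delta_k \in \mathfrak{g}^{(k+1)}$ chosen so that $-c\beta(\delta_k) \equiv v - F(u_k) \pmod{\mathfrak{g}^{(k+2)}}$, preserving the invariant $F(u_k) \equiv v \pmod{\mathfrak{g}^{(k+1)}}$. The process terminates in finitely many steps, yielding $u$ with $F(u) = v$.

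Replacing $\varphi$ by the composition $e^{-\ad u} \circ \varphi$, which is still a Lie algebra isomorphism $\mathfrak{s}_\alpha \to \mathfrak{s}_\beta$, we obtain a new isomorphism sending $t_\alpha$ to $ct_\beta$. Its restriction $\Phi' \in \Aut(\mathfrak{g})$ satisfies $\Phi' \alpha (\Phi')^{-1} = c\beta$, or equivalently $\beta = c^{-1}\Phi' \alpha (\Phi')^{-1}$. Reading this off against {Definition~{\ref{def-equivalent-group-gradings}}} with the group isomorphism $\mathbb{R} \to \mathbb{R}$, $x \mapsto cx$, exhibits the required equivalence of the $\mathbb{R}$-gradings induced by $\alpha$ and $\beta$. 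I expect the main technical obstacle to lie in the nilpotent analogue of the inverse function theorem used to solve $F(u) = v$, where one must carefully verify that the inductive corrections respect the lower central series filtration at every stage.
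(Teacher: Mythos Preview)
Your proof is correct and shares the same skeleton as the paper's: identify $\mathfrak{g}$ as the nilradical to extract the relation $\Phi\alpha\Phi^{-1}=c\beta+\ad_v$, absorb $\ad_v$ by an inner automorphism, and read off the equivalence of gradings. The substantive difference is in the absorption step. The paper isolates this as {Lemma~\ref{proposition-derivation-plus-ad-is-conjugate}} and proves it analytically: it composes conjugations by one-parameter groups along an eigenbasis of the derivation (via the auxiliary Lemmas~\ref{lemma-adjoint-conjugation-of-ad} and~\ref{lemma-adjoint-conjugation-of-derivation}) to build a map $\phi\colon\mathbb{R}^n\to\mathfrak{g}$, and then establishes surjectivity of $\phi$ through a dilation-homogeneity trick combined with the inverse function theorem. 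Your argument instead stays inside $\mathfrak{s}_\beta$ and solves $e^{\ad u}(ct_\beta)=ct_\beta+v$ by successive corrections along the lower central series, using only that $c\beta$ is invertible and that derivations preserve this filtration; the verification that $F(u_k+\delta_k)-F(u_k)\equiv -c\beta(\delta_k)\pmod{\mathfrak{g}^{(k+2)}}$ for $\delta_k\in\mathfrak{g}^{(k+1)}$ goes through exactly as you sketch. This route is more elementary---no eigenbasis, no analytic input---while the paper's version makes the role of the positive grading (through the dilations $D_\lambda$) more visible and packages the absorption as a reusable standalone lemma. The paper also normalizes the smallest eigenvalues to $1$ up front to force $c=1$, whereas you carry $c$ throughout; either way works.
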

The enumeration of positive gradings we have established immediately gives the corresponding enumeration of diagonal Heintze groups over \(\mathfrak{g}\). The enumeration of positive gradings can be understood in two different ways, as we discussed in {Subsection~{\ref{sssec-pos-gradings}}}. The corresponding enumeration of Heintze groups has similar character: it is either a parametrization via the projections or a finite list that does not contain all the isomorphism classes of Heintze groups but a representative for each family in terms of the layers.

Considering the parametrization of positive gradings via parametrization of the projections, we note that if one is able to eliminate equivalent gradings from the enumeration of positive gradings, then by {Proposition~{\ref{prop-noneqgradings-produce-diff-heintze}}} the corresponding list of Heintze groups does not contain isomorphic Heintze groups. Notice that already over \(\mathfrak{g}=\mathbb{R}^2\) there are uncountably many isomorphism classes of Heintze groups given by the projections \((1,0) \mapsto 1\) and \((0,1) \mapsto a\) with \(a >0\).

{Proposition~{\ref{prop-noneqgradings-produce-diff-heintze}}} will follow by a suitable conjugation of the derivations by the adjoint map. We will first recall some relevant formulas. We thank G.~Pallier for helping us improve an early version of {Lemma~{\ref{lemma-adjoint-conjugation-of-derivation}}}.

Recall that \(\Ad_{\exp(X)} = e^{\ad({X})}\), see \cite[Proposition~1.91]{knapp-beyond-introduction}, and recall the identity
\begin{equation*}
\Ad_{\exp(X)}\circ\ad({Y})\circ\Ad_{\exp(-X)} = \ad({\Ad_{\exp(X)}Y}).
\end{equation*}

\begin{lemma}\label{lemma-adjoint-conjugation-of-derivation}
Let \(\mathfrak{g}\) be a Lie algebra. Let \(\delta\in\der(\mathfrak{g})\) be a derivation and let \(X\in\mathfrak{g}\) be such that \([\delta(X),X]=0\). Then
\begin{equation*}
\Ad_{\exp(X)} \circ \delta \circ \Ad_{\exp(-X)} = \delta - \ad({\delta(X)})\text{.}
\end{equation*}

\end{lemma}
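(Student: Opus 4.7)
The plan is to use the identity $\Ad_{\exp(X)} = \exp(\ad_X)$ together with the Hadamard-type expansion
\[
e^{A} \circ B \circ e^{-A} = \sum_{n=0}^{\infty} \frac{1}{n!} [A,B]_n,
\]
where the brackets $[A,B]_n$ are iterated commutators of linear operators on $\mathfrak{g}$, defined by $[A,B]_0=B$ and $[A,B]_{n+1} = A\circ[A,B]_n - [A,B]_n\circ A$. Setting $A = \ad_X$ and $B = \delta$, the claim will follow once I show that the series terminates after the linear term and that the linear term is $-\ad_{\delta(X)}$.

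The first step is to compute the single commutator $[\ad_X, \delta]$ by evaluating it on an arbitrary $Y \in \mathfrak{g}$ and applying the Leibniz rule for $\delta$:
\[
[\ad_X,\delta](Y) = [X,\delta(Y)] - \delta([X,Y]) = [X,\delta(Y)] - [\delta(X),Y] - [X,\delta(Y)] = -\ad_{\delta(X)}(Y).
\]
So $[\ad_X,\delta] = -\ad_{\delta(X)}$ as operators. The second step is to compute the next commutator using the Jacobi identity for operator brackets (equivalently, the identity $[\ad_U,\ad_V]=\ad_{[U,V]}$):
\[
[\ad_X,[\ad_X,\delta]] = -[\ad_X,\ad_{\delta(X)}] = -\ad_{[X,\delta(X)]} = 0,
\]
where the last equality is exactly the hypothesis $[\delta(X),X]=0$.

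From this an easy induction shows $[\ad_X,\delta]_n = 0$ for every $n \ge 2$, so the Hadamard series collapses to
\[
\Ad_{\exp(X)} \circ \delta \circ \Ad_{\exp(-X)} = \delta + [\ad_X,\delta] = \delta - \ad_{\delta(X)},
\]
giving the stated formula.

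The only mild subtlety is justifying the Hadamard expansion: when $\mathfrak{g}$ is finite-dimensional $\ad_X$ is a bounded endomorphism and the series is the standard conjugation identity in $\operatorname{End}(\mathfrak{g})$; in the present context one could alternatively bypass any convergence issue by setting $f(t)=\Ad_{\exp(tX)}\circ\delta\circ\Ad_{\exp(-tX)}$, observing $f'(t)=[\ad_X,f(t)]$, and noting that the hypothesis forces $f''\equiv 0$, so $f(1)=f(0)+f'(0)=\delta-\ad_{\delta(X)}$. Either way, the main (and only) content is the single commutator computation together with the observation that the hypothesis kills precisely the term that would obstruct termination.
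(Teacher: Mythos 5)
Your proof is correct. The single commutator computation \([\ad_X,\delta]=-\ad_{\delta(X)}\) is exactly right, the hypothesis \([\delta(X),X]=0\) enters precisely where it must (it makes \(\ad_{\delta(X)}\) commute with \(\ad_X\), hence kills every iterated bracket of order at least two), and the truncated Hadamard series then yields the stated identity.

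The paper organizes the same underlying algebra differently. Instead of expanding the conjugation as a series of iterated commutators of \(\ad_X\) with \(\delta\) and showing it truncates, the paper computes \([\delta,\ad_X^n]=\sum_{k=0}^{n-1}\ad_X^k\circ\ad_{\delta(X)}\circ\ad_X^{n-k-1}\) by iterating the Leibniz rule, uses the hypothesis to collapse the sum, deduces the polynomial identity \([\delta,P(\ad_X)]=\ad_{\delta(X)}\circ P'(\ad_X)\), and passes to the limit to obtain \([\delta,e^{-\ad_X}]=\ad_{\delta(-X)}\circ e^{-\ad_X}\), from which the claim follows after conjugating back and invoking Lemma~\ref{lemma-adjoint-conjugation-of-ad}. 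Your route --- and especially the ODE variant \(f'(t)=[\ad_X,f(t)]\), \(f''\equiv 0\) --- is arguably cleaner and makes the role of the hypothesis more transparent; the paper's route produces along the way the general derivative formula for \([\delta,P(\ad_X)]\), which holds before the hypothesis is used. Both arguments rest on \(\Ad_{\exp(X)}=e^{\ad_X}\) and share the convergence caveat you correctly flag: in the intended application \(\mathfrak{g}\) is nilpotent, so \(\ad_X\) is nilpotent and all the exponential series are finite sums, while in the real finite-dimensional setting convergence is standard in any case.
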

\begin{proof}\label{g:proof:idp24}
A consequence of the assumption \([\delta(X),X]=0\) is that if \(P\) is a polynomial and \(P'\) denotes its derivative polynomial, then we have
\begin{equation*}
[\delta,P(\ad(X))] = \ad({\delta(X)}) \circ P'(\ad(X))\text{.}
\end{equation*}
In the limit we obtain
\begin{equation*}
[\delta,e^{-\ad(X)}] = \ad({\delta(-X)}) \circ e^{-\ad(X)}.
\end{equation*}
Expanding out the bracket in \(e^{\ad(X)}[\delta,e^{-\ad(X)}]\) and reorganizing terms making use of the assumption \([\delta(X),X]=0\), we obtain the desired formula. 
\end{proof}
\begin{lemma}\label{proposition-derivation-plus-ad-is-conjugate}
Let \(\mathfrak{g}\) be a Lie algebra over \(\mathbb{R}\). Let \(\delta\in\der(\mathfrak{g})\) be a diagonalizable derivation with all eigenvalues strictly positive. Then for every vector \(Y\in\mathfrak{g}\) there exists a vector \(X\in\mathfrak{g}\) such that \(\Ad_{\exp(X)}\circ\delta\circ\Ad_{\exp(-X)}=\delta-\ad(Y)\).
\end{lemma}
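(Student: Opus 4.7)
The plan is to rewrite the desired conjugation identity as a polynomial equation in $X$ and then to solve this equation layer by layer using the positive grading induced by $\delta$. Since $\delta$ is diagonalizable with strictly positive eigenvalues $0 < \lambda_1 < \cdots < \lambda_s$, the Lie algebra decomposes into $\delta$-eigenspaces $\mathfrak{g} = \bigoplus_{i=1}^s \mathfrak{g}_{\lambda_i}$, yielding a positive grading. In particular $\mathfrak{g}$ is nilpotent and every operator $\ad_Z$ is nilpotent.

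The first step is to derive, for an arbitrary $X \in \mathfrak{g}$, a closed form for the conjugated derivation, without the commutation hypothesis of Lemma~\ref{lemma-adjoint-conjugation-of-derivation}. The basic identity $[\ad_X, \delta] = -\ad_{\delta(X)}$, which follows from the Leibniz rule, implies by induction on $n$ that $(\operatorname{ad}_{\ad_X})^n \delta = -\ad_{(\ad_X)^{n-1}\delta(X)}$ for every $n \geq 1$. Summing the series
\[
\Ad_{\exp(X)} \circ \delta \circ \Ad_{\exp(-X)} = \sum_{n \geq 0}\frac{1}{n!}(\operatorname{ad}_{\ad_X})^n\delta,
\]
which terminates by nilpotency, then yields
\[
\Ad_{\exp(X)} \circ \delta \circ \Ad_{\exp(-X)} \;=\; \delta - \ad_{\phi(\ad_X)\delta(X)}, \qquad \phi(z) = \sum_{m \geq 0}\frac{z^m}{(m+1)!}.
\]
Thus the lemma reduces to showing that for every $Y \in \mathfrak{g}$ the polynomial equation $\phi(\ad_X)\delta(X) = Y$ admits a solution $X \in \mathfrak{g}$.

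For this, decompose $X = \sum_i X^{(i)}$ and $Y = \sum_i Y^{(i)}$ with $X^{(i)}, Y^{(i)} \in \mathfrak{g}_{\lambda_i}$. Every iterated bracket of length $\geq 1$ appearing in the expansion of $\phi(\ad_X)\delta(X)$ lies in a layer whose weight is the sum of the weights of all its factors, hence strictly greater than any individual factor's weight, since all $\lambda_i$ are positive. Consequently the $\mathfrak{g}_{\lambda_i}$-component of $\phi(\ad_X)\delta(X)$ has the form
\[
\lambda_i X^{(i)} + P_i\bigl(X^{(1)}, \ldots, X^{(i-1)}\bigr),
\]
where $P_i$ is a polynomial expression involving only strictly lower layers. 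Since each $\lambda_i > 0$, the equations across layers can be solved recursively by setting
\[
X^{(i)} \;:=\; \lambda_i^{-1}\bigl(Y^{(i)} - P_i(X^{(1)}, \ldots, X^{(i-1)})\bigr)
\]
for $i = 1, \ldots, s$, which produces the desired $X$.

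The main technical ingredient is the closed-form expansion in the second paragraph; once this is in hand, the recursive solvability is essentially bookkeeping driven entirely by the observation that positivity of the eigenvalues forces every bracket to strictly raise weights.
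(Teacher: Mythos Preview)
Your argument is correct and takes a genuinely different route from the paper's proof. The paper does not derive a closed formula for a single conjugation $\Ad_{\exp(X)}\circ\delta\circ\Ad_{\exp(-X)}$; instead it applies Lemma~\ref{lemma-adjoint-conjugation-of-derivation} (which needs the commutation hypothesis $[\delta(X),X]=0$) one eigenvector at a time, composing the resulting conjugations into a map $\phi\colon\mathbb{R}^n\to\mathfrak{g}$. Surjectivity of $\phi$ is then obtained by an analytic argument: after a reparametrization the map becomes equivariant under the one-parameter dilation group $D_\lambda=\exp(\delta\log\lambda)$, so surjectivity reduces to openness at the origin, which follows from computing the derivative $D_0\phi=\delta$ and invoking its invertibility. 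Your approach bypasses all of this: the identity $(\operatorname{ad}_{\ad_X})^n\delta=-\ad_{(\ad_X)^{n-1}\delta(X)}$ gives the conjugation formula in one stroke, and the positive grading makes the resulting equation upper-triangular in the eigenspace components, so it can be solved by explicit back-substitution. The payoff is a purely algebraic proof with no smoothness, no dilations, and no open-mapping step; the paper's version, in exchange, highlights the role of the homogeneous structure induced by $\delta$ and links the lemma more visibly to the one-parameter automorphism group.
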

\begin{proof}\label{g:proof:idp25}
For a vector \(X\in\mathfrak{g}\), denote by \(C_X\colon\der(\mathfrak{g})\to\der(\mathfrak{g})\) the conjugation map
\begin{equation*}
C_X(\eta) = \Ad_{\exp(X)}\circ \eta \circ \Ad_{\exp(-X)}\text{.}
\end{equation*}
Let \(X_1,\ldots,X_n\) be a basis of \(\mathfrak{g}\) that diagonalizes \(\delta\). Consider the map
\begin{equation*}
\Phi\colon \mathbb{R}^n\to \der(\mathfrak{g}),\quad \Phi(x_1,\ldots,x_n)=C_{x_nX_n}\circ\cdots\circ C_{x_1X_1}(\delta)\text{.}
\end{equation*}
By repeated application of {Lemma~{\ref{lemma-adjoint-conjugation-of-derivation}}}, it follows that \(\Phi(x) = \delta - \ad({\phi(x)})\), where \(\phi\colon \mathbb{R}^n\to\mathfrak{g}\) is the map
\begin{align}
\phi(x_1,\amp\ldots,x_n) = \delta(x_nX_n) + \Ad_{\exp(x_nX_n)}\delta(x_{n-1}X_{n-1})+\cdots\label{eq-def-adjoint-sum}\\
\amp+ \Ad_{\exp(x_nX_n)}\Ad_{\exp(x_{n-1}X_{n-1})}\cdots\Ad_{\exp(x_2X_2)}\delta(x_{1}X_{1})\text{.}\notag
\end{align}
Since the composition of conjugations is a conjugation, it suffices to prove that the map \(\phi\) is surjective.

Let \(w_1,\ldots,w_n\gt 0\) be the eigenvalues of the vectors \(X_1,\ldots,X_n\) for the derivation \(\delta\). Since the maps \(x_i\mapsto \operatorname{sign}(x_i)\abs{x_i}^{w_i}\) are all invertible, the map \(\phi\colon \mathbb{R}^n\to\mathfrak{g}\) is surjective if and only if the map \(\tilde{\phi}\colon \mathbb{R}^n\to\mathfrak{g}\) defined by
\begin{equation}
\tilde{\phi}(x_1,\ldots,x_n)=\phi(\operatorname{sign}(x_1)\abs{x_1}^{w_1},\ldots,\operatorname{sign}(x_n)\abs{x_n}^{w_n})\label{eq-rescaled-adjoint-sum}
\end{equation}
is surjective.

Let \(D_\lambda\in\Aut(\mathfrak{g})\), \(\lambda\gt 0\), be the one-parameter family of dilations defined by the derivation \(\delta\), i.e., \(D_\lambda = \exp(\delta \log \lambda)\). Then for each \(i=1,\ldots,n\) the dilation is given by \(D_\lambda(X_i)=\lambda^{w_i}X_i\) and we have the dilation equivariance
\begin{equation*}
\Ad_{\exp(\lambda^{w_i} X_i)}\circ D_\lambda = D_{\lambda}\circ\Ad_{\exp(X_i)}\text{.}
\end{equation*}

Applying the above equivariance to the definition {({\ref{eq-rescaled-adjoint-sum}})} we find that the map \(\tilde{\phi}\) is \(D_\lambda\)-homogeneous, i.e.\@, \(\tilde{\phi}(\lambda x) = D_\lambda(\tilde{\phi}(x))\) for all \(x\in\mathbb{R}^n\) and \(\lambda\gt 0\). Since \(\bigcup_{\lambda > 0} D_\lambda(U) = \mathfrak g\) for any neighborhood \(U\) of the identity it follows that the map \(\tilde{\phi}\) is surjective if and only if it is open at zero. Since the change of parameters in {({\ref{eq-rescaled-adjoint-sum}})} is a homeomorphism, the same is true also for the map \(\phi\).

By the definition {({\ref{eq-def-adjoint-sum}})}, the map \(\phi\) is smooth. The derivative of each summand \(\Ad_{\exp(x_nX_n)}\cdots\Ad_{\exp(x_{i+1}X_{i+1})}\delta(x_iX_i)\) at zero is the map \(x\mapsto \delta(x_iX_i)\), so the derivative \(D_0\phi\) of the map \(\phi\) at zero is
\begin{equation*}
D_0\phi(x_1,\ldots,x_n) = \delta(x_1X_1+\cdots+x_nX_n)\text{.}
\end{equation*}
By the strictly positive eigenvalue assumption, the map \(\delta\) is invertible. Since \(X_1,\ldots,X_n\) is a basis of \(\mathfrak{g}\), it follows that the map \(\phi\) is open at zero, concluding the proof.
\end{proof}
\begin{proof}[Proof of Proposition~{\ref*{prop-noneqgradings-produce-diff-heintze}}]\label{g:proof:idp26}
Rescaling the derivations by a scalar, we may assume that the smallest of the eigenvalues for both derivations is 1. Since the Heintze groups are assumed to be isomorphic, it is straightforward to see that there is a vector \(X \in \mathfrak{g}\) such that the derivation \(\alpha\) is conjugate by a Lie algebra automorphism of \(\mathfrak{g}\) to the derivation \(\beta + \ad(X)\). By {Lemma~{\ref{proposition-derivation-plus-ad-is-conjugate}}}, it follows that \(\alpha\) and \(\beta\) are conjugate. Applying {Lemma~{\ref{lemma-conjugates-and-subtori}}}\ref{enum-conj} to the {split tori} spanned by \(\alpha\) and \(\beta\) gives the desired result.
\end{proof}

\typeout{************************************************}
\typeout{Subsection 4.4 Bounds for non-vanishing \(\ell^{q,p}\) cohomology}
\typeout{************************************************}

\subsection{Bounds for non-vanishing \(\ell^{q,p}\) cohomology}\label{applications-lpq}
Knowing all the possible {positive gradings} of a nilpotent Lie algebra \(\mathfrak{g}\) has one further application in the realm of quasi-isometric classifications. The parametrization of all the possible positive gradings combined with the technical tools presented in \cite{pansu-rumin} can be used to find improved vanishing estimates for the \(\ell^{q,p}\) cohomology of a nilpotent Lie group, which is a well-known quasi-isometry invariant. In this section we present a systematic way of obtaining these estimates using the theory considered in the previous sections. Note that the methods of this paper for computing the maximal grading require the Lie algebra to be defined over an algebraically closed field, however see {Remark~{\ref{remark-non-algebraically-closed-fields}}}.

By definition, the \(\ell^{q,p}\) cohomology of a Riemannian manifold with bounded geometry is the \(\ell^{q,p}\) cohomology of every bounded geometry simplicial complex quasi-isometric to it. A crucial result of \cite{pansu-rumin} shows that in the case of contractible Lie groups, the \(\ell^{q,p}\) cohomology of the manifold is isomorphic to its {\(L^{q,p}\) cohomology}.
\begin{definition}\label{def-lpq}
The \terminology{\(L^{q,p}\) cohomology} of a nilpotent Lie group \(G\) is defined as
\begin{equation*}
L^{q,p}H^\bullet(G)=\frac{\lbrace \text{closed forms in }L^p\rbrace}{d\big(\lbrace \text{forms in }L^q\rbrace\big)\cap L^p}\text{.}
\end{equation*}

\end{definition}
In \cite[Theorem~1.1]{pansu-rumin}  it is shown that the Rumin complex constructed on a {Carnot group} allows for sharper computations regarding \(L^{q,p}H^\bullet(G)\) when compared to the usual de Rham complex. Defining and reviewing the properties of the Rumin complex \((E_0^\bullet,d_c)\) goes beyond the scope of this paper. For the following discussion, it is sufficient to know that the space of Rumin \(h\)-forms \(E_0^h\) is a subspace of the space of smooth differential \(h\)-forms of the underlying nilpotent Lie group \(G\).
\begin{definition}\label{def-weights-of-forms}
Let us consider a {positive grading} \(\mathcal{V} : \mathfrak{g}=\bigoplus_{\alpha\in\mathbb{R}}V_\alpha\). If \(\theta=X^\ast\) for \(X \in V_\alpha\), then we say that the left-invariant 1-form \(\theta\) has \terminology{weight} \(\alpha\) and denote \(w(\theta)=\alpha\). In general, given a left-invariant \(h\)-form, we say that it has \terminology{weight} \(p\) if it can be expressed as a linear combination of left-invariant \(h\)-forms \(\theta_{i_1,\ldots,i_h} = \theta_{i_1}\wedge\cdots\wedge\theta_{i_h}\) such that \(w(\theta_{i_1})+\cdots+w(\theta_{i_h})=p\).
\end{definition}
Given a positive grading \(\mathcal V :\mathfrak{g} = \bigoplus_{\alpha \in \mathbb{R}} V_\alpha\), we call the quantity
\begin{equation*}
Q = \sum_{\alpha \in \mathbb{R}_+}\alpha \dim V_\alpha
\end{equation*}
the \terminology{homogeneous dimension} of \(\mathcal V\). We also define for each degree \(h\) the number
\begin{gather*}
\delta N_{\min}(h)= \min_{\theta\in E_0^h} w(\theta)-\max_{\tilde{\theta}\in E_0^{h-1}}w(\tilde{\theta})\,.
\end{gather*}

The following is \cite[Theorem~1.1(ii)]{pansu-rumin}.
\begin{theorem}\label{thm-nonvanishing-lpq-bounds}
Let \(G\) be a {Carnot group} of homogeneous dimension \(Q\). If
\begin{equation*}
1\le p,q\le\infty\textit{ and } \frac{1}{p}-\frac{1}{q}\lt \frac{\delta N_{\min}(h)}{Q}
\end{equation*}
then the {\(L^{q,p}\) cohomology} of \(G\) in degree \(h\) does not vanish.
\end{theorem}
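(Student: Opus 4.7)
The plan is to show $L^{q,p} H^h(G) \neq 0$ by exhibiting a $d$-closed form in $L^p$ which is not the differential of any $L^q$ form. Since a Carnot group carries both a weight structure from the {stratification} and a compatible one-parameter family of dilations $D_\lambda$, I would argue by a scaling obstruction inside the Rumin complex $(E_0^\bullet, d_c)$ of \cite{pansu-rumin}, which is quasi-isometric to the de Rham complex in this setting but is graded by the weights of {Definition~\ref{def-weights-of-forms}}.

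The two scaling ingredients I would record first: a Rumin $h$-form of pure weight $w$ satisfies $D_\lambda^* \theta = \lambda^w \theta$ on left-invariant generators, while the Haar volume scales by $\lambda^Q$, so a compactly supported pure-weight form obeys $\|D_\lambda^* \omega\|_p = \lambda^{w - Q/p}\|\omega\|_p$. Secondly, $D_\lambda^*$ commutes with $d_c$ since dilations are Lie group automorphisms. Assume now for contradiction that $L^{q,p} H^h(G) = 0$. A closed graph argument on the Banach quotient $\{d_c \eta : \eta \in L^q\} \cap L^p$ produces a constant $C > 0$ such that every $d_c$-closed $\omega \in L^p \cap E_0^h$ admits a primitive $\eta \in L^q \cap E_0^{h-1}$ with $\|\eta\|_q \le C \|\omega\|_p$.

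Now fix a nonzero compactly supported $d_c$-closed form $\omega_0 \in L^p \cap E_0^h$ of pure weight $w_h = \min_{\theta \in E_0^h} w(\theta)$. Apply the Poincaré bound to $D_\lambda^* \omega_0$ for each $\lambda > 0$ and replace the resulting primitive by its component of weight $w'_{h-1} = \max_{\tilde\theta \in E_0^{h-1}} w(\tilde\theta)$, which is still a primitive of $D_\lambda^* \omega_0$ because the weight gap $\delta N_{\min}(h) = w_h - w'_{h-1}$ is the minimum shift of $d_c$, so only the top-weight component of an $(h-1)$-form can produce a weight-$w_h$ contribution to $d_c \eta$. Call this pure-weight primitive $\eta_\lambda$. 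Pulling back by $D_\lambda^{-1}$ then yields a primitive of the fixed form $\omega_0$, and the composed estimate is
\begin{equation*}
\|(D_\lambda^{-1})^* \eta_\lambda\|_q = \lambda^{Q/q - w'_{h-1}}\|\eta_\lambda\|_q \le C\,\lambda^{\delta N_{\min}(h) - Q(1/p - 1/q)}\,\|\omega_0\|_p.
\end{equation*}
The hypothesis $1/p - 1/q < \delta N_{\min}(h)/Q$ makes the exponent strictly positive, so as $\lambda \to 0^+$ one obtains primitives of $\omega_0$ with $L^q$-norm tending to $0$. Distributional continuity of $d_c$ then forces $\omega_0 = 0$, contradicting the choice of $\omega_0$.

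The hard parts will be the two weight-projection steps. Producing the pure-weight closed test form $\omega_0$ is nontrivial because cutting off a left-invariant closed form breaks $d_c$-closedness with errors of strictly higher weight, which must be corrected by lower-degree primitives without leaving $L^p$; this is essentially the technical content of \cite{pansu-rumin}. Isolating the $w'_{h-1}$-component of a primitive $\tilde\eta_\lambda$ likewise requires decomposing $d_c$ into bihomogeneous pieces and arguing that the $\eta$-components of weight strictly below $w'_{h-1}$ cannot contribute to the weight-$w_h$ part of $d_c\tilde\eta_\lambda$ because they would require a shift smaller than $\delta N_{\min}(h)$, contradicting its definition as the minimum gap. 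Both points rely on fine structural properties of the Rumin complex and interpolation estimates beyond the purely scaling-theoretic framework sketched here.
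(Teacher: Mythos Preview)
The paper does not contain a proof of this theorem. It is stated with the attribution ``The following is \cite[Theorem~1.1(ii)]{pansu-rumin}'' and then used as a black box for the application in {Subsection~\ref{applications-lpq}}. There is therefore no proof in the paper to compare your proposal against.

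Since you have attempted a proof, one substantive remark on its content: your weight-projection step for the primitive is backwards. You claim that components of $\tilde\eta_\lambda$ of weight strictly below $w'_{h-1}$ cannot contribute to the weight-$w_h$ part of $d_c\tilde\eta_\lambda$ because that ``would require a shift smaller than $\delta N_{\min}(h)$''. In fact a component of weight $w<w'_{h-1}$ landing in weight $w_h$ requires a shift of $w_h-w>w_h-w'_{h-1}=\delta N_{\min}(h)$, i.e.\ a \emph{larger} shift, which nothing forbids; $\delta N_{\min}(h)$ is defined as the gap between extreme weights in consecutive degrees, not as the minimum weight increment of $d_c$ on homogeneous pieces. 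So isolating the $w'_{h-1}$-component does not in general yield a primitive of $\omega_0$. The actual argument in \cite{pansu-rumin} handles the scaling comparison without this projection, working instead with the full weight decomposition and the contracting homotopy of the Rumin complex.
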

Moreover, in Theorem 9.2 of the same paper it is shown how the non-vanishing statement has a wider scope, as it can be applied to Carnot groups equipped with a homogeneous structure that comes from a {positive grading}. This result has been further extended in \cite{Tripaldi-2020-rumin_complex} to arbitrary positively graded nilpotent Lie groups.

A natural question that stems from these considerations is whether it is possible to identify which choice of positive grading will yield the best interval for non-vanishing cohomology. This problem can be easily presented in terms of maximising the value of the fraction \(\delta N_{\min}(h)/Q\) among all the possible positive gradings for a given Lie group \(G\).

Let us describe the maximization procedure in more detail. Recall that positive gradings of \(\mathfrak{g}\) are parametrized by vectors \(\mathbf{a}\in\positiveset\) where \(\positiveset\) is defined by {({\ref{eq-positive-set}})}. Denote by \(w(\theta)_{\mathbf{a}}\) the {weight} of a one-form \(\theta\) for the positive grading associated with a vector \(\mathbf{a}\in\positiveset\). Then we want to find the value of the following expression for each degree \(h\):
\begin{gather*}
\max_{\mathbf{a}\in\positiveset}\bigg\{\frac{\min_{\theta\in E_0^h}w(\theta)_\mathbf{a}-\max_{\tilde{\theta}\in E_0^{h-1}}w(\tilde{\theta})_{\mathbf{a}}}{Q_{\mathbf{a}}}\bigg\}\text{,}
\end{gather*}
where \(Q_{\mathbf{a}}\) is the homogeneous dimension of \(\pi^{\mathbf a}_*(\mathcal W)\).

\label{conversion-into-linear-optimizatio-problem}
A problem of this form can be converted into a linear optimization problem as follows:
\begin{enumerate}[label=\arabic*.]
\item{}replace \(\min_{\theta\in E_0^h} w(\theta)_\mathbf{a}\) with a new variable \(x\), and add the constraint \(x\le w(\theta)_\mathbf{a}\) for each \(\theta\in E_0^h\);
\item{}replace \(\max_{\tilde{\theta}\in E_0^{h-1}}w(\tilde{\theta})\) with a new variable \(y\), and add the constraint \(y\ge w(\tilde{\theta})_\mathbf{a}\) for each \(\tilde{\theta}\in E_0^{h-1}\);
\item{}normalize the expression by imposing \(Q_\mathbf{a}=1.\)
\end{enumerate}
We are then left with the following expression for our original maximization problem
\begin{align*}
\text{Maximize}\quad \amp x-y\\
\text{subject to}\quad x\amp\le w(\theta)_\mathbf{a}\quad \forall\,\theta\in E_0^h,\\
y\amp\ge w(\tilde{\theta})_\mathbf{a}\quad \forall \tilde{\theta}\in E_0^{h-1},\\
Q_\mathbf{a}\amp=1,\quad
\mathbf{a}\in\positiveset
\end{align*}
which can easily be solved by a computer, yielding the optimal bound for non-vanishing cohomology using the method of {Theorem~{\ref{thm-nonvanishing-lpq-bounds}}}.
\begin{example}\label{bounds-for-L-6-10-group}
Let us consider the non-stratifiable Lie group \(G\) of dimension 6, whose Lie algebra is denoted as \(L_{6,10}\) in \cite{deGraaf-2007-dim_6_nilpotent_lie_algebras}, with the non-trivial brackets
\begin{equation*}
[X_1,X_2]=X_3\;,\;[X_1,X_3]=[X_5,X_6]=X_4\text{.}
\end{equation*}

The space of Rumin forms in \(G\) is
\begin{align*}
E_0^1\amp=\langle\theta_1,\theta_2,\theta_5,\theta_6\rangle;\\
E_0^2\amp=\langle\theta_{5,6}-\theta_{1,3},\theta_{1,5},\theta_{1,6},\theta_{2,3},\theta_{2,5},\theta_{2,6}\rangle;\\
E_0^3\amp=\langle\theta_{2,5,6}+\theta_{1,2,3},\theta_{2,3,5},\theta_{2,3,6},\theta_{1,3,4}-\theta_{4,5,6},\theta_{1,4,5},\theta_{1,4,6}\rangle.
\end{align*}

For the Lie algebra \(L_{6,10}\), the {maximal grading} is over \(\mathbb{Z}^3\) with the {layers}
\begin{align*}
V_{(0,1,0)}\amp=\langle X_1\rangle,\amp
V_{(0,0,1)}\amp=\langle X_2\rangle,\amp
V_{(0,1,1)}\amp=\langle X_3\rangle\\
V_{(0,2,1)}\amp=\langle X_4\rangle,\amp
V_{(1,0,0)}\amp=\langle X_5\rangle,\amp
V_{(-1,2,1)}\amp=\langle X_6\rangle\text{.}
\end{align*}
The family of projections \(\pi^{\mathbf{a}} \colon \mathbb{Z}^3 \to \mathbb{R}\) giving positive gradings is parametrized by \((a_1,a_2,a_3)=\mathbf{a}\in\positiveset\) as in {({\ref{eq-positive-set}})}. The {weights} of left-invariant 1-forms are
\begin{align*}
w(\theta_1)_{\mathbf{a}}\amp=\pi^{\mathbf{a}}(0,1,0)=a_2;\\
w(\theta_2)_{\mathbf{a}}\amp=\pi^{\mathbf{a}}(0,0,1)=a_3;\\
w(\theta_3)_{\mathbf{a}}\amp=\pi^{\mathbf{a}}(0,1,1)=a_2+a_3;\\
w(\theta_4)_{\mathbf{a}}\amp=\pi^{\mathbf{a}}(0,2,1)=2a_2+a_3;\\
w(\theta_5)_{\mathbf{a}}\amp=\pi^{\mathbf{a}}(1,0,0)=a_1;\\
w(\theta_6)_{\mathbf{a}}\amp=\pi^{\mathbf{a}}(-1,2,1)=2a_2+a_3-a_1\text{.}
\end{align*}
From this computation we get the explicit expression
\begin{equation*}
\positiveset = \{\mathbf{a}\in\mathbb{R}^3: a_1\gt 0,\,a_2\gt 0,\,a_3\gt 0,\,-a_1+2a_2+a_3\gt 0 \}
\end{equation*}
and the homogeneous dimension \(Q_{\mathbf{a}}=6a_2+4a_3\).

Let us first consider the bound for non-vanishing cohomology in degree 1. We express
\begin{gather*}
\max_{\mathbf{a}\in\positiveset}\bigg\{\frac{\delta N_{\min}(1)}{Q_{\mathbf{a}}}\bigg\}=\max_{\mathbf{a}\in\positiveset}\bigg\{\frac{\min\{a_1,a_2,a_3,2a_2+a_3-a_1\}}{6a_2+4a_3}\bigg\}
\end{gather*}
as the linear optimization problem
\begin{align*}
\text{Maximize}\quad \amp x\\
\text{subject to}\quad x\amp\le a_1,\;
x\leq a_2,\;
x\leq a_3,\\
x\amp\leq 2a_2+a_3-a_1,\\
1\amp=6a_2+4a_3,\\
a_1\amp,a_2,a_3\gt 0,\; 2a_2+a_3-a_1\gt 0\text{.}
\end{align*}
A solver finds the solution \(\frac{1}{10}\), which is obtained by choosing \(a_1=a_2=a_3=\frac{1}{10}\). Since the quantity \(\frac{\delta N_{\min}(1)}{Q_{\mathbf{a}}}\) is scaling invariant, we find that the grading defined by \(a_1=a_2=a_3=1\) gives \(\ell^{q,p}H^1(G)\neq 0\) with the optimal bound \(\frac{1}{p}-\frac{1}{q}\lt \frac{1}{10}\).

Similarly, once we re-express
\begin{gather*}
\max_{\mathbf{a}\in\positiveset}\bigg\{\frac{\delta N_{\min}(2)}{Q_{\mathbf{a}}}\bigg\} 
\end{gather*}
as a linear optimization problem and feed it into a solver, we get the result \(\frac{1}{10}\), obtained (up to rescaling) by taking \(a_2=a_3=2\) and \(a_1=3\). Therefore \(\ell^{q,p}H^2(G)\neq 0\) for \(\frac{1}{p}-\frac{1}{q}\lt\frac{1}{10}\).

Likewise, we obtain the optimal bound \(\frac{1}{p}-\frac{1}{q}\lt\frac{1}{10}\) for \(\ell^{q,p}H^3(G)\neq 0\) by taking \(a_1=a_2=a_3=1\).

Finally, by Hodge duality, see \cite[Theorem~7.3]{Tripaldi-2020-rumin_complex}, we obtain the optimal bounds for \(\ell^{q,p}\) cohomology in complementary degree, that is \(\ell^{q,p}H^4(G)\neq 0\), \(\ell^{q,p}H^5(G)\neq 0\), and \(\ell^{q,p}H^6(G)\neq 0\), for \(\frac{1}{p}-\frac{1}{q}\lt\frac{1}{10}\).
\end{example}
\begin{remark}\label{remark-Engel-example}
\cite[Example~9.5]{pansu-rumin} describes an explicit {positive grading} in the Engel group that gives an improved bound for the non-vanishing of the {\(L^{q,p}\) cohomology} in degree 2. By a similar computation as the one shown in {Example~{\ref{bounds-for-L-6-10-group}}}, one can verify that the value given in \cite[Example~9.5]{pansu-rumin} is indeed the optimal bound.
\end{remark}

\appendix

\addtocontents{toc}{\vspace{\normalbaselineskip}}

\typeout{************************************************}

\textbf{Acknowledgements.}
All of the authors were supported by the Academy of Finland (grant 288501 Geometry of subRiemannian groups and by grant 322898 Sub-Riemannian Geometry via Metric-geometry and Lie-group Theory) and by the European Research Council (ERC Starting Grant 713998 GeoMeG Geometry of Metric Groups).
E.H. was also supported by the Vilho, Yrjö and Kalle Väisälä Foundation, and by the SISSA project DIP\_ECC\_MATE\_CoordAreaMate\_0459 - Dipartimenti di Eccellenza 2018 - 2022 (CUP: G91|18000050006).
V.K. was also supported by the Emil Aaltonen foundation.
F.T. was also supported by the University of Bologna, funds for selected research topics, and by the European Union's Horizon 2020 research and innovation programme under the Marie Skłodowska-Curie grant agreement No 777822 GHAIA Geometric and Harmonic Analysis with Interdisciplinary Applications, and the Swiss National Foundation grant 200020\_191978.

\bibliographystyle{amsalpha}
\bibliography{ref}

\end{document}